\setheadfoot{\onelineskip}{2\onelineskip} 
\DeclareMathAlphabet{\mathpzc}{OT1}{pzc}{m}{it}
\DeclareFontFamily{U}{mathx}{\hyphenchar\font45}
\DeclareFontShape{U}{mathx}{m}{n}{
      <5> <6> <7> <8> <9> <10>
      <10.95> <12> <14.4> <17.28> <20.74> <24.88>
      mathx10
      }{}
\DeclareSymbolFont{mathx}{U}{mathx}{m}{n}
\DeclareMathAccent{\widecheck}{0}{mathx}{"71}
  \definecolor{darkblue}{rgb}{0,0,0.7}
\newif\ifpgfshaperectangleroundnortheast
\newif\ifpgfshaperectangleroundnorthwest
\newif\ifpgfshaperectangleroundsoutheast
\newif\ifpgfshaperectangleroundsouthwest
\def\pgf@sh@bg@rectangle{%
  \pgfkeysgetvalue{/pgf/outer xsep}{\outerxsep}%
  \pgfkeysgetvalue{/pgf/outer ysep}{\outerysep}%
  \pgfpathmoveto{\pgfpointadd{\southwest}{\pgfpoint{\outerxsep}{\outerysep}}}%
  {\ifpgfshaperectangleroundnorthwest\else\pgfsetcornersarced{\pgfpointorigin}\fi%
    \pgfpathlineto{\pgfpointadd{\southwest\pgf@xa=\pgf@x\northeast\pgf@x=\pgf@xa}{\pgfpoint{\outerxsep}{-\outerysep}}}}%
  {\ifpgfshaperectangleroundnortheast\else\pgfsetcornersarced{\pgfpointorigin}\fi%
    \pgfpathlineto{\pgfpointadd{\northeast}{\pgfpoint{-\outerxsep}{-\outerysep}}}}%
  {\ifpgfshaperectangleroundsoutheast\else\pgfsetcornersarced{\pgfpointorigin}\fi%
    \pgfpathlineto{\pgfpointadd{\southwest\pgf@ya=\pgf@y\northeast\pgf@y=\pgf@ya}{\pgfpoint{-\outerxsep}{\outerysep}}}}%
  {\ifpgfshaperectangleroundsouthwest\else\pgfsetcornersarced{\pgfpointorigin}\fi%
    \pgfpathclose}}
\tikzset{
  	WD/.style={
  	  label/.style={
      	font=\everymath\expandafter{\the\everymath\scriptstyle},
        inner sep=0pt,
        node distance=2pt and -2pt},
      semithick,
      node distance=\bbx and \bby,
      decoration={markings, mark=at position \stringdecpos with \stringdec},
    	bb port length=0,
  	  bb port sep=.5,
	 	  bbx = .4cm,
		  bb min width=.4cm,
	    bby = 2ex,
	    bb rounded corners=2pt,
	    dot size=3pt,
      pack size = 16pt,
    	penetration = 0pt,
      link size = 2pt,
      pack color = blue,
      surround sep=2pt,
      ar/.style={postaction={decorate}},
  		execute at begin picture={\tikzset{
  			x=\bbx, y=\bby, 
				circuit logic US, tiny circuit symbols
				}
			}
    },
    bbx/.store in=\bbx,
    bby/.store in=\bby,
    bb port sep/.store in=\bbportsep,
    bb port length/.store in=\bbportlen,
    bb min width/.store in=\bbminwidth,
    bb rounded corners/.store in=\bbcorners,
    bb/.code 2 args={
    	\pgfmathsetlengthmacro{\bbheight}{\bbportsep * (max(#1,#2)+1) * \bby}
      \pgfkeysalso{draw,minimum height=\bbheight,minimum
       width=\bbminwidth,outer sep=0pt,
         rounded corners=\bbcorners,thick,
         prefix after command={\pgfextra{\let\fixname\tikzlastnode}},
         append after command={\pgfextra{\draw
            \ifnum #1=0{} \else foreach \i in {1,...,#1} {
            	($(\fixname.north west)!{(2*\i-1)/(2*#1)}!(\fixname.south west)$) +(-\bbportlen,0) coordinate (\fixname_in\i) -- +(\bbportlen,0) coordinate (\fixname_in\i')}\fi 
            \ifnum #2=0{} \else foreach \i in {1,...,#2} {
            	($(\fixname.north east)!{(2*\i-1)/(2*#2)}!(\fixname.south east)$) +(-
\bbportlen,0) coordinate (\fixname_out\i') -- +(\bbportlen,0) coordinate (\fixname_out\i)}\fi;
           }}}
		},
	dot size/.store in=\dotsize,
	dot/.style={
		circle, draw, thick, inner sep=0, fill=black, minimum width=\dotsize
	},
	pack size/.store in=\psize,
	penetration/.store in=\penetration,
  spacing/.store in=\spacing,
  link size/.store in=\lsize,
  pack color/.store in=\pcolor,
 	pack inside color/.store in=\picolor,
  pack inside color=blue!20,
 	pack outside color/.store in=\pocolor,
  pack outside color=blue!50!black,
 	surround sep/.store in=\ssep,
 	link/.style={
  	circle, 
  	draw=black, 
  	fill=black,
  	inner sep=0pt, 
 		minimum size=\lsize
 	},
  pack/.style={
 		circle, 
 		draw = \pocolor, 
  	fill = \picolor,
  	minimum size = \psize
  },
  func/.style={
  	pack,
		rectangle,
		rounded corners=.5*\psize,
		inner ysep=.125*\psize,
		minimum width=1.125*\psize,
		inner xsep=.25*\psize,
  },
  funcr/.style={
    func,
    rectangle round north west=false, 
		rectangle round south west=false,
  },
  funcl/.style={
    func,
		rectangle round north east=false, 
		rectangle round south east=false,
  },
  funcu/.style={
    func,
		rectangle round south east=false, 
		rectangle round south west=false,
  },
  funcd/.style={
    func,
		rectangle round north east=false, 
		rectangle round north west=false,
  },
  oshell/.style={
	  kite, 
	  inner sep=0pt, 
	  draw = \pocolor, 
  	fill = \picolor,
		inner ysep=.125*\psize,
		minimum width=.5*\psize,
		inner xsep=.075*\psize,
	},  
  oshellr/.style={
		oshell,
	  shape border rotate=90, 
	  },
  oshelll/.style={
		oshell,
	  shape border rotate=270, 
	  },
  oshellu/.style={
		oshell,
	  shape border rotate=180, 
	  },
  oshelld/.style={
		oshell,
	  shape border rotate=0, 
	  },
  outer pack/.style={
 		ellipse, 
 		draw,
  	inner sep=\ssep,
  	color=gray,
 	},
  intermediate pack/.style={
 		ellipse,
 		dashed, 
  	draw,
  	inner sep=\ssep,
 		color=\pocolor,
 	},
 }
\tikzset{light gray nodes/.style={every node/.style={fill=gray!40}}}
\tikzset{
	oriented WD/.style={
		every to/.style={out=0,in=180,draw},
    label/.style={
    	font=\everymath\expandafter{\the\everymath\scriptstyle},
      inner sep=0pt,
      node distance=2pt and -2pt},
    semithick,
    node distance=1 and 1,
    decoration={markings, mark=at position \stringdecpos with \stringdec},
    ar/.style={postaction={decorate}},
    execute at begin picture={\tikzset{
    	x=\bbx, y=\bby,
      every fit/.style={inner xsep=\bbx, inner ysep=\bby}}}
    },
    string decoration/.store in=\stringdec,
    string decoration={\arrow{stealth};},
    string decoration pos/.store in=\stringdecpos,
    string decoration pos=.7,
    bbx/.store in=\bbx,
    bbx = 1.5cm,
    bby/.store in=\bby,
    bby = 1.5ex,
    bb port sep/.store in=\bbportsep,
    bb port sep=1.5,
    bb port length/.store in=\bbportlen,
    bb port length=4pt,
    bb penetrate/.store in=\bbpenetrate,
    bb penetrate=0,
    bb min width/.store in=\bbminwidth,
    bb min width=1cm,
    bb rounded corners/.store in=\bbcorners,
    bb rounded corners=2pt,
    bb spider/.style={
    	bb port sep=1, bb port length=10pt, bbx=.4cm, bb min width=.4cm, bby=.8ex},
    bb small/.style={
    	bb port sep=1, bb port length=2.5pt, bbx=.4cm, bb min width=.4cm, bby=.7ex},
		bb medium/.style={
			bb port sep=1, bb port length=2.5pt, bbx=.4cm, bb min width=.4cm, bby=.9ex},
    bb/.code 2 args={
    	\pgfmathsetlengthmacro{\bbheight}{\bbportsep * (max(#1,#2)+1) * \bby}
      \pgfkeysalso{draw,minimum height=\bbheight,minimum
       width=\bbminwidth,outer sep=0pt,
         rounded corners=\bbcorners,thick,
         prefix after command={\pgfextra{\let\fixname\tikzlastnode}},
         append after command={\pgfextra{\draw
            \ifnum #1=0{} \else foreach \i in {1,...,#1} {
            	($(\fixname.north west)!{(2*\i-1)/(2*#1)}!(\fixname.south west)$) +(-\bbportlen,0) coordinate (\fixname_in\i) -- +(\bbpenetrate,0) coordinate (\fixname_in\i')}\fi 
            \ifnum #2=0{} \else foreach \i in {1,...,#2} {
            	($(\fixname.north east)!{(2*\i-1)/(2*#2)}!(\fixname.south east)$) +(-
\bbpenetrate,0) coordinate (\fixname_out\i') -- +(\bbportlen,0) coordinate (\fixname_out\i)}\fi;
           }}}
		},
		bb name/.style={
    	append after command={
				\pgfextra{\node[anchor=north] at (\fixname.north) {#1};}
			}
		},
  }
\tikzset{
	unoriented WD/.style={
  	every to/.style={draw},
  	shorten <=-\penetration, shorten >=-\penetration,
  	label distance=-2pt,
  	thick,
  	node distance=\spacing,
  	execute at begin picture={\tikzset{
  		x=\spacing, y=\spacing, circuit logic US, tiny circuit symbols}
		}
  },
  pack size/.store in=\psize,
  pack size = 12pt,
	penetration/.store in=\penetration,
	penetration = 0pt,
  spacing/.store in=\spacing,
  spacing = 8pt,
  link size/.store in=\lsize,
  link size = 2pt,
  pack color/.store in=\pcolor,
  pack color = violet,
 	pack inside color/.store in=\picolor,
  pack inside color=violet!20,
 	pack outside color/.store in=\pocolor,
  pack outside color=violet!50!black,
 	surround sep/.store in=\ssep,
  surround sep=8pt,
 	link/.style={
  	circle, 
  	draw=black, 
  	fill=black,
  	inner sep=0pt, 
 		minimum size=\lsize
 	},
  pack/.style={
 		circle, 
 		draw = \pocolor, 
  	fill = \picolor,
  	minimum size = \psize
  },
  func/.style={
  	pack,
		rectangle,
		rounded corners=.5*\psize,
		inner ysep=.125*\psize,
		minimum width=1.125*\psize,
		inner xsep=.25*\psize,
  },
  funcr/.style={
    func,
    rectangle round north west=false, 
		rectangle round south west=false,
  },
  funcl/.style={
    func,
		rectangle round north east=false, 
		rectangle round south east=false,
  },
  funcu/.style={
    func,
		rectangle round south east=false, 
		rectangle round south west=false,
  },
  funcd/.style={
    func,
		rectangle round north east=false, 
		rectangle round north west=false,
  },
  outer pack/.style={
 		ellipse, 
 		draw,
  	inner sep=\ssep,
  	color=gray,
 	},
  intermediate pack/.style={
 		ellipse,
 		dashed, 
  	draw,
  	inner sep=\ssep,
 		color=\pocolor,
 	},
}
\tikzset{
	spider diagram/.style={
		every to/.style={out=0, in=180, draw, thick},
		thick,
		dot size = 5pt,
		execute at begin picture={\tikzset{
    	x=\leglen, y=\leglen/3}}
	},
	dot size/.store in=\dotsize,
	dot fill/.store in=\dotfill,
	dot fill = black,
	leg length/.store in=\leglen,
	leg length = 15pt,
	baby/.style={dot size = 2pt, leg length = 6pt},
	young/.style={dot size = 3pt, leg length = 10pt},
	adolescent/.style={dot size = 4pt, leg length = 12pt},
	special spider/.code n args={4}{
		\pgfkeysalso{circle, draw, thick, inner sep=0, fill=\dotfill, minimum width=\dotsize,
  		prefix after command={\pgfextra{\let\fixname\tikzlastnode}},
  		append after command={\pgfextra{
  			\ifnum #1=0{} \else {\foreach \i in {1,...,#1} {
					\tikzmath{\anglei={-90*(#1+1-2*\i)/#1};}
  				\draw [thick]
						(\fixname) .. controls 
						($(\fixname.center)-(\anglei:#3/3)$) and ($(\fixname.center)-(\anglei:#3*2/3)$) .. 
						({$(\fixname.center)-(\anglei:#3*2/3)$}-|{$(\fixname.center)-(#3,0)$}) coordinate (\fixname_in\i);
  			}}\fi
  			\ifnum #2=0{} \else {\foreach \i in {1,...,#2} {
					\tikzmath{\anglei={90*(#2+1-2*\i)/#2};}
  				\draw [thick]
						(\fixname.center) .. controls 
						($(\fixname.center)+(\anglei:#4/3)$) and ($(\fixname.center)+(\anglei:#4*2/3)$) .. 
						({$(\fixname.center)+(\anglei:#4*2/3)$}-|{$(\fixname.center)+(#4,0)$}) coordinate (\fixname_out\i);
  			}}\fi
  		}}
		}
	},
	spider/.code 2 args={
		\pgfkeysalso{special spider={#1}{#2}{\leglen}{\leglen}}
	}
}
\tikzset{
	inner WD/.style={
		every to/.style={out=0, in=180, draw, thick},
		unoriented WD, 
		surround sep=2pt, 
		font=\tiny, 
		anchor=center
	}
}
\tikzset{
  function/.style={->, thin, shorten <=4pt, shorten >=4pt}
}
\tikzset{
  tick/.style={
  	postaction={
    	decorate,
      decoration={
      	markings, mark=at position 0.5 with {
					\draw[-] (0,.4ex) -- (0,-.4ex);
				}
			}
		}
	}
} 
\newcommand{\tickar}{\begin{tikzcd}[baseline=-0.5ex,cramped,sep=small,ampersand 
replacement=\&]{}\ar[r,tick]\&{}\end{tikzcd}}
\newcommand{\Tickar}[1]{\begin{tikzcd}[baseline=-0.5ex,cramped,sep=18pt, ampersand 
replacement=\&]{}\ar[r,tick, "#1"]\&{}\end{tikzcd}}
  \setlist{noitemsep, nolistsep}
	\setlist[description]{leftmargin=0em, itemindent=2em}
\theoremstyle{plain}
\newtheorem*{theorem*}{Theorem} 
\newtheorem{theorem}{Theorem}[chapter] 
\newtheorem{proposition}[theorem]{Proposition}
\newtheorem*{proposition*}{Proposition}
\newtheorem{corollary}[theorem]{Corollary}
\newtheorem{lemma}[theorem]{Lemma}
\theoremstyle{definition}
\newtheorem{definition}[theorem]{Definition}
\newtheorem{notation}[theorem]{Notation}
\newtheorem*{axiom*}{Axiom}
\theoremstyle{remark}
\newtheorem{example}[theorem]{Example}
\newtheorem{remark}[theorem]{Remark}
\newcommand{\Set}[1]{\mathrm{#1}}
\newcommand{\ord}[1]{\underline{#1}}
\newcommand{\cat}[1]{\mathcal{#1}}
\newcommand{\ccat}[1]{\mathbb{#1}}
\newcommand{\Cat}[1]{{\mathsf{#1}}}
\newcommand{\CCat}[1]{\mathcal{\StrLeft{#1}{1}}\Cat{\StrGobbleLeft{#1}{1}}}
\newcommand{\poCat}[1]{\mathbb{\StrLeft{#1}{1}}\Cat{\StrGobbleLeft{#1}{1}}}
\newcommand{\ladj}{\Cat{LAdj}}
\newcommand{\radj}{\Cat{RAdj}}
\newcommand{\ladjto}{\to}
\newcommand{\radjto}{\to}
\DeclareMathOperator{\ob}{\Set{Ob}}
\DeclareMathOperator{\id}{id}
\DeclareMathOperator{\im}{im}
\DeclareMathOperator{\inc}{inc}
\DeclarePairedDelimiter{\pair}{\langle}{\rangle}
\DeclarePairedDelimiter{\copair}{[}{]}
\newcommand{\tn}[1]{\textnormal{#1}}
\newcommand{\op}{^{\tn{op}}}
\newcommand{\tp}{^{\dagger}}
\newcommand{\co}{^{\tn{co}}}
\newcommand{\inv}{^{-1}}
\newcommand{\tpow}[1]{^{\otimes #1}}
\newcommand{\rrgcat}{\CCat{RgCat}}
\newcommand{\rrlcat}{\CCat{RlPoCat}}
\newcommand{\rrel}{\poCat{Rel}}
\newcommand{\ccospan}{\CCat{Cspn}}
\newcommand{\sspan}{\CCat{Span}}
\newcommand{\finset}{\Cat{FinSet}}
\newcommand{\smset}{\Cat{Set}}
\newcommand{\cc}{\ccat{C}}
\newcommand{\dd}{\ccat{D}}
\newcommand{\rr}{\ccat{R}}
\newcommand{\nn}{\mathbb{N}}
\newcommand{\pp}{\mathbb{P}}
\newcommand{\qq}{\mathbb{Q}}
\newcommand{\ww}{\mathbb{W}}
\newcommand{\er}{\Set{ER}}
\newcommand{\cp}{\mathbin{\fatsemi}}
\newcommand{\cocolon}{:\!}
\newcommand{\To}[1]{\xrightarrow{#1}}
\newcommand{\Too}[1]{\To{\;\;#1\;\;}}
\newcommand{\from}{\leftarrow}
\newcommand{\From}[1]{\xleftarrow{#1}}
\newcommand{\surj}{\twoheadrightarrow}
\newcommand{\inj}{\rightarrowtail}
\renewcommand{\ss}{\subseteq}
\newcommand{\tto}{\Rightarrow}
\newcommand{\qqand}{\qquad\text{and}\qquad}
\newcommand{\adjphantom}[3][-.6pt]{\ar[#2, phantom, "#3" yshift=#1]}
\newcommand{\adj}[5][30pt]{
\begin{tikzcd}[ampersand replacement=\&, column sep=#1]
  #2\ar[r, shift left=5pt, "{#3}"]\adjphantom[0]{r}{\Rightarrow}\&
  #5\ar[l, shift left=5pt, "{#4}"]
\end{tikzcd}
}
\newcommand{\pb}[1][very near start]{\ar[dr, phantom, #1, "\lrcorner"]}
\begin{document}   

\title{Regular and relational categories:\\Revisiting `Cartesian bicategories I'}
\author{Brendan Fong \and David I.\ Spivak}
\date{\vspace{-.3in}}
  
\maketitle

\begin{abstract}
  Regular logic is the fragment of first order logic generated by $=$, $\top$, $\wedge$, and $\exists$. A key feature of this logic is that it is the minimal fragment required to express composition of binary relations; another is that it is the internal logic of regular categories. The link between these two facts is that in any regular category, one may construct a notion of binary relation using jointly-monic spans; this results in what is known as the bicategory of relations of the regular category. In this paper we provide a direct axiomatization of bicategories of relations, which we term \emph{relational po-categories}, reinterpreting the earlier work of Carboni and Walters along these lines. Our main contribution is an explicit proof that the 2-category of regular categories is equivalent to that of relational po-categories. Throughout, we emphasize the graphical nature of relational po-categories. 
\end{abstract}


\chapter{Introduction}

The goal in this paper is to abstract the properties of the bicategory of relations of a regular category.%
\footnote{We shall assume all our regular categories are well powered, i.e.\ that every object has a set of subobjects. This means we may talk of the category of relations without running into size issues.}

This is not a new task. Notably, Freyd and Scedrov's version of this story led to the notion of an allegory \cite{freyd1990categories}, while Carboni and Walters' version led to that of functionally complete `bicategories of relations' \cite{carboni1987cartesian}. Here we revisit the approach of Carboni and Walters, which emphasizes the monoidal structure on the bicategory of relations, as well as certain maps that coherently exist and form an algebraic structure on each object. In our terminology, we axiomatize the presence of all these cohering algebraic maps as a \emph{supply of wirings}.

Let us take as an example $\rrel$, the ur-bicategory of relations consisting of sets and binary relations. Note that it has a canonical symmetric monoidal structure, given by the cartesian product of sets, and that each homset has a canonical poset structure, given by inclusion of relations. Thus $\rrel$ forms a symmetric monoidal locally-posetal 2-category, or \emph{monoidal po-category} for short. 

Given any set $X$, for all $m,n \in \nn$ we have the diagonal binary relation $\{(x,\dots,x )\mid x \in X\} \subseteq X^m \times X^n$. We depict these with the following string diagrams, for various $m,n$:
\[
	\begin{tikzpicture}[unoriented WD, font=\small]
	  \node (P1) {
    \begin{tikzpicture}[inner WD]
  	  \draw (0,0) to[out=0, in=0, looseness=2] (0,-2);
   	\end{tikzpicture}	
	  };
	  \node[right=7 of P1] (P2) {
  	\begin{tikzpicture}[inner WD]
      \node[link] (dot) {};
  	  \draw (dot) to +(-10pt, 0);
  	\end{tikzpicture}
	  };
	  \node[right=7 of P2] (P3) {
  	\begin{tikzpicture}[inner WD]
      \node[link] (delta) {};
      \draw (delta) -- +(-2,0);
      \draw (delta) to[out=60, in=180] +(2,1);
      \draw (delta) to[out=-60, in=180] +(2,-1);
  	\end{tikzpicture}
	  };
	  \node[right=7 of P3] (P4) {
  	\begin{tikzpicture}[inner WD]
      \node[link] (dot) {};
      \node at ($(dot)+(-1.5,0)$) {$\vdots$};
      \node at ($(dot)+(1.5,0)$) {$\vdots$};
      \draw (dot) to[out=120, in=0] +(-2,1.2);
      \draw (dot) to[out=160, in=0] +(-2,.5);
      \draw (dot) to[out=-120, in=0] +(-2,-1.3);
      \draw (dot) to[out=60, in=180] +(2,1.2);
      \draw (dot) to[out=20, in=180] +(2,.5);
      \draw (dot) to[out=-60, in=180] +(2,-1.3);
  	\end{tikzpicture}
	  };
    \node[below=1 of P1] (L1) {$m=2,n=0$};
    \node at (L1-|P2) {$m=1,n=0$};
    \node at (L1-|P3) {$m=1,n=2$};
    \node at (L1-|P4) {$m,n$};
\end{tikzpicture}
\]
These diagrams behave like wires in the sense that composition is about interconnection; for example:
\[
	\begin{tikzpicture}[unoriented WD, font=\small]
	  \node (P1) {
	  \begin{tikzpicture}[inner WD]
      \node[link] (dot) {};
      \node[link,below=2 of dot] (dot2) {};
      \node[link,right=4 of dot] (dot3) {};
      \node[link] at ($(dot2)+(5,-2)$) (dot4) {};
  	  \draw (dot) to +(-2, 0);
      \draw (dot) to[out=60, in=180] +(2,1) to[out=0,in=120] (dot3); 
      \draw (dot) to[out=-60, in=180] +(2,-1) to[out=0,in=-120] (dot3); 
  	  \draw (dot3) to +(4, 0) coordinate (right);
      \draw (dot3) to[out=60, in=180] +(2,1) -- +(2,0);
      \draw (dot3) to[out=-60, in=180] +(2,-1) -- +(2,0);
  	  \draw (dot2) to +(-2, 0) coordinate (left);
      \draw (dot2) to[out=60, in=180] +(2,1) coordinate (hi) to (hi-|right); 
      \draw (dot2) to[out=-60, in=180] +(2,-1) to[out=0,in=120] (dot4); 
      \draw (dot4) to[out=-120, in=0] +(-2,-1) coordinate (bye) to (bye-|left) ;
      \draw (dot4) to +(1.5,0) node[link] {};
   	\end{tikzpicture}	
	  };
	  \node[right=2 of P1] (P2) {
  	\begin{tikzpicture}[inner WD]
      \node[link] (dot) {};
      \node[link,below=2 of dot] (dot2) {};
  	  \draw (dot) to +(-2, 0);
      \draw (dot) to[out=60, in=180] +(2,1);
  	  \draw (dot) to +(2, 0);
      \draw (dot) to[out=-60, in=180] +(2,-1);
  	  \draw (dot2) to +(2, 0);
      \draw (dot2) to[out=120, in=0] +(-2,1);
      \draw (dot2) to[out=-120, in=0] +(-2,-1);
  	\end{tikzpicture}
	  };
    \node at ($(P1.east)!.5!(P2.west)$) {$=$};
  \end{tikzpicture}
\]
Write $\ccospan$ for the bicategory of cospans of finite sets. More abstractly to equip an object $r$ in a po-category $\cc$ with a wiring structure is to provide a monoidal functor $\ccospan\co \to \cc$ sending $1$ to $r$. A po-category \emph{supplies wirings} if every object is equipped with a wiring, in a way compatible with the monoidal product.

With these wirings as syntax, we capture logical properties of relations using equations and inequalities. For example every binary relation $R \subseteq X \times Y$ obeys
\[
  \{x \mid \exists y. (x,y) \in R\} \subseteq X
  \quad \mbox{and} \quad
  \{(x,y,y) \mid (x,y) \in R\} \subseteq \{ (x,y_1,y_2) \mid (x,y_1) \in R, (x,y_2) \in R\}.
\]
In terms of wirings, we draw this:
\[
	\begin{tikzpicture}[unoriented WD, font=\small]
	  \node (P1) {
	  \begin{tikzpicture}[inner WD]
	    \node[oshellr] (f) {$R$};
	    \node[link, right=.5 of f] (dot) {};
	    \draw (f) -- +(-10pt, 0);
      \draw (f) to (dot);
   	\end{tikzpicture}	
	  };
	  \node[right=2 of P1] (P2) {
  	\begin{tikzpicture}[inner WD]
      \node[link] (dot) {};
  	  \draw (dot) to +(-10pt, 0);
  	\end{tikzpicture}
	  };
	  \node at ($(P1.east)!.5!(P2.west)$) {$\leq$};
  	\node[right=7 of P2] (P3) {
  	 \begin{tikzpicture}[inner WD]
    	\coordinate (f1);
  		\coordinate[below=2 of f1] (f2);
  		\node[link] at ($(f1.west)!.5!(f2.west)+(-1,0)$) (dot) {};
    	\node[oshellr, left=.5 of dot] (f0) {$R$};
      \draw (f0.west) to +(-1,0);
      \draw (f0) to (dot);
  		\draw (f1.west) to[out=180, in=60] (dot);
  		\draw (f2.west) to[out=180, in=-60] (dot);
  		\draw (f1.east) -- +(1,0);
  		\draw (f2.east) -- +(1,0);
    \end{tikzpicture}
    };
   	\node[right=1 of P3] (P4) {
    \begin{tikzpicture}[inner WD]
    	\node[oshellr] (f1) {$R$};
  		\node[oshellr, below=.5 of f1] (f2) {$R$};
  		\draw (f1.east) -- +(1,0);
  		\draw (f2.east) -- +(1,0);
  		\node[link] at ($(f1.west)!.5!(f2.west)+(-1,0)$) (dot) {};
  		\draw (f1.west) to[out=180, in=60] (dot);
  		\draw (f2.west) to[out=180, in=-60] (dot);
  		\draw (dot) to +(-1,0);
    \end{tikzpicture}
     };
	  \node at ($(P2.east)!.5!(P3.west)$) {and};
	  \node at ($(P3.east)!.5!(P4.west)$) {$\leq$};
\end{tikzpicture}
\]
If these two inequalities hold for all morphisms $f$ in a monoidal po-category $\cc$, we say that the induced supply of comonoids is lax homomorphic, and that $\cc$ is a \emph{prerelational po-category}. In a prerelational po-category, we may express the property of being an adjoint or a monomorphism equationally. 

It is furthermore the case in $\rrel$ that every 1-morphism factors into a right adjoint followed by a left adjoint, satisfying a certain joint-monicness condition; i.e.\ it has a \emph{tabulation}. If $\cc$ is prerelational and has tabulations, we call it a \emph{relational po-category}; these are our main objects of study.

It is well known that the construction of the bicategory of relations $\rrel(\cat{R})$ does not forget any data of the regular category $\cat{R}$; the latter can be recovered internally as the category of left adjoints in $\rrel(\cat{R})$. Indeed, writing $\rrgcat$ for the 2-category of regular categories and $\rrlcat$ for the 2-category of relational po-categories, our main theorem is simply the following:

\begin{theorem*}
There is an equivalence of 2-categories
$
\begin{tikzcd}[column sep=30pt]
	\rrgcat\ar[r, shift left, "\rrel"]&
	\rrlcat\ar[l, shift left, "\ladj"]
\end{tikzcd}
$.
\end{theorem*}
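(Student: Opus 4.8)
The plan is to exhibit $\rrel$ and $\ladj$ as quasi-inverse 2-functors by constructing natural equivalences $\ladj\circ\rrel\simeq\id_{\rrgcat}$ and $\rrel\circ\ladj\simeq\id_{\rrlcat}$. Before that I would confirm both assignments are genuine 2-functors. That $\rrel(\cat{R})$ is a relational po-category is presumably established earlier; what remains is to check that a regular functor induces a morphism of relational po-categories — preserving the monoidal structure, the supply of wirings, and tabulations up to the appropriate laxness — and that this respects 2-cells. For the reverse direction the essential well-definedness statement is that $\ladj(\cc)$ is a regular category: I would obtain finite products from the supplied comonoid structure (the counit and comultiplication exhibit $\otimes$ as a categorical product on the subcategory of left adjoints, by the usual cocommutative-comonoid argument), obtain the remaining finite limits (equalizers and pullbacks) from tabulations, and read off the regular epi–mono factorization by tabulating coreflexives; stability of regular epis under pullback then follows from the compatibility of tabulation with composition.

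For the first round trip the key observation is that in $\rrel(\cat{R})$ the left adjoints are exactly the graphs of morphisms of $\cat{R}$: a jointly-monic span $X\from R\to Y$ that admits a right adjoint must have its left leg $R\to X$ an isomorphism, so it is the graph of an arrow $X\to Y$, and composition of graphs agrees with composition in $\cat{R}$. Hence the evaluation functor $\ladj(\rrel(\cat{R}))\to\cat{R}$ is an equivalence, and naturality in $\cat{R}$ is a routine check on generators.

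The second round trip is the heart of the proof. Given a relational po-category $\cc$, write $\cat{R}:=\ladj(\cc)$; I must produce an equivalence of po-categories $\rrel(\cat{R})\simeq\cc$. The comparison functor sends a relation in $\cat{R}$, i.e.\ a jointly-monic span of left adjoints $(f,g)$, to the composite $f\tp\cp g$ in $\cc$; tabulation supplies the inverse, since every morphism $r$ of $\cc$ has a tabulation — a jointly-monic span $(f,g)$ of left adjoints with $r=f\tp\cp g$ — which is precisely a relation in $\cat{R}$. The work is to show this is a well-defined, fully faithful functor of po-categories: that two tabulations of the same morphism yield isomorphic spans (uniqueness up to iso), that the order on relations matches the hom-poset order of $\cc$, and that composition of relations via pullback in $\cat{R}$ (available because $\cat{R}$ is regular) corresponds to composition in $\cc$. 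Each reduces, through the graphical calculus of wirings, to a diagrammatic manipulation using the lax comonoid-homomorphism inequalities and the adjunction and Frobenius laws.

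Finally I would check naturality of both equivalences and their compatibility with the 2-cells of $\rrgcat$ and $\rrlcat$, so that the data assembles into an adjoint equivalence of 2-categories rather than a merely pointwise one. I expect the reconstruction $\rrel(\ladj(\cc))\simeq\cc$ to be the principal obstacle, and within it the matching of pullback-composition of relations with composition in $\cc$ to be the most delicate point, since it requires computing the tabulation of a composite from the tabulations of its factors.
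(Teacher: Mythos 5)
Your proposal follows essentially the same route as the paper: establish 2-functoriality of both constructions, identify the left adjoints in $\rrel(\cat{R})$ with graphs of morphisms of $\cat{R}$ for one round trip, and use tabulations (sending a jointly-monic span $(f,g)$ of left adjoints to $f\tp\cp g$ and back) for the other. The only difference is that the paper sharpens both round trips to identity-on-objects \emph{isomorphisms}---which is why it can also assert a 1-equivalence of underlying 1-categories---whereas you aim only for equivalences, which still suffices for the statement as given.
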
 

Moreover, we explicitly give a 2-equivalence of these categories that descends to a 1-equivalence on the underlying 1-categories. As one might expect, the functor $\rrel\colon \rrgcat \to \rrlcat$ takes a regular category and constructs its category of relations, while its inverse $\ladj\colon \rrlcat \to \rrgcat$ takes a relational po-category and returns its category of left adjoints.

The two equivalent structures are quite different on the surface: one is a 1-category with finite limits and pullback stable image factorizations; the other is a po-category that supplies wires and has tabulations. To prove the equivalence, Carboni and Walters went through a good deal of calculations, even though writing fairly tersely. The supply of wirings provides us with a graphical syntax with which to clarify many of those computations. As a byproduct, we will see that our computations in a relational po-category involve diagrams that are rather network-like, sensitive only to an indexing of boundary ports, and not to an exact allocation of domain and codomain. More formally, they take place in a hypergraph category \cite{fong2019hypergraph}.

Another way to understand the goal of reaxiomatization, is that we seek to lay out a minimal structure that allows us to interpret regular logic, the fragment of first order logic generated by $=$, $\top$, $\wedge$, and $\exists$. Here the wires allow us to express equality, the lax homomorphic structure gives us $\top$ and $\wedge$, and the tabulations give existential quantification.

\subsection*{Related work}
As we have mentioned, we are not the first to provide an axiomatization of relations, and we are largely revisiting the main results of Carboni and Walters' seminal paper `Cartesian bicategories I', leaning heavily on their insights. What we add here is: 
\begin{itemize}[nolistsep, noitemsep]
  \item an explicit 2-categorical equivalence, 
  \item more invariant definitions, made possible by a supply-oriented perspective,
  \item new, graphical proofs.
\end{itemize} 
That said, the major purpose of this paper is expository; we believe Carboni and Walters' work has been underappreciated. Walters in particular was vocal about the beauty of the graphical language of bicategories of relations, but their original 1987 paper was typeset in a time before the graphical tools we use became available. We hope this tribute to their ideas improves the availability of their insights to a wider audience, particularly those coming in from applications of category theory to networked systems and graphical languages. 

On this note, while the previous work of Freyd, Scedrov, Carboni, Walters and others shows a long history of interest in characterizing of categories of relations, the impetus for this work has been the more recent application of these ideas to network-style graphical languages, and the presence of implicit relational structure. The first examples include the work of Baez and Erbele \cite{baez2015categories}, and Bonchi, Sobocinski, and Zanasi \cite{Bonchi.Sobocinski.Zanasi:2017a}, on the signal flow graphs used in control theory and electrical engineering, which provide syntax for manipulating relations in the category of finite dimesional vector space. Recent work focussing on the principles behind these graphical languages includes that on relational theories by Bonchi, Pavlovic, and Sobocinski \cite{bonchi2017functorial}, and our companion paper \cite{fong2018graphical} on graphical regular logic.

\subsection*{Outline}

In \cref{chap.background} we review locally-posetal categories, which we call po-categories, and symmetric monoidal structures for them. In particular we will introduce po-props. 

In \cref{chap.reg_cats_rels} we review the notion of regular category and its associated po-category of relations. In \cref{chap.reg2cats}, we axiomatize the  resulting po-category as one that \emph{supplies wirings} and \emph{has tabulations}; we call such categories \emph{relational}. To do so, we need to define the po-prop $\ww$ of wirings as well as define the notions of supply and tabulation. In \cref{chap.relations_functor}, we prove that a regular category's po-category of relations is indeed relational and moreover that this construction is 2-functorial. 

In \cref{chap.ladj_functor}, we provide a 2-functor going back, from relational po-categories to regular categories. To do so requires an exploration of relational po-categories. We do so by using a graphical language that arises from the supply of wirings $\ww$. In \cref{sec.prove_equiv}, we prove our main theorem: we have a 2-equivalence of 2-categories, that descends to a 1-equivalence of 1-categories.

Finally, in \cref{chap.carboni_walters} we discuss how our definitions are equivalent to their corresponding notions in Carboni and Walters.

\subsection*{Acknowledgements}
Thanks to Christina Vasilakopoulou for pointing out an error in some of our previous work, that led to this re-examination of our starting assumptions.  We acknowledge support from AFOSR grants FA9550-17-1-0058 and FA9550-19-1-0113.

\subsection*{Notation}
\begin{itemize}[nolistsep, noitemsep]
\item Whenever we speak of 2-categories, 2-functors, or 2-natural transformations, we mean them in the strict sense. 
\item We often denote compositions of morphisms $f\colon x\to y$ and $g\colon y\to z$ in diagrammatic order, as $f\cp g\colon x\to z$.
\item Given a natural number $n\in\nn$, we write $\ord{n}\coloneqq\{1,2,\ldots,n\}$, e.g.\ $\ord{0}=\varnothing$.
\item Named 1-categories are denoted in san-serif, e.g.\ $\finset$; named 2-categories are denoted with a calligraphic first letter, e.g.\ $\rrgcat$. Generic po-categories are denoted in blackboard bold, e.g.\ $\cc$.
\end{itemize}

\chapter{Background}\label{chap.background}
Mostly to set our notation and terminology, we review the basic theory of symmetric monoidal locally-posetal 2-categories, or \emph{monoidal po-categories} for short.

\section{Po-categories and adjunctions}

We will use the following notion throughout the paper. 

\begin{definition}[Po-category]
A \emph{po-category} is a locally-posetal category $\cc$, i.e.\ it is an ordinary 1-category for which the set $\cc(c,c')$ of morphisms between any two objects has been equipped with a partial order, and for which composition is monotonic.

A \emph{po-functor} $F\colon\cc\to\dd$ between po-categories is an ordinary 1-functor for which the function $F(c,c')\colon\cc(c,c')\to\dd(Fc,Fc')$ is monotonic for any two objects $c,c'\in\cc$.

A \emph{natural transformation} $\alpha$ between po-functors $F,G\colon\cc\to\dd$ is an ordinary natural transformation $\alpha\colon F\tto G$ between the underlying 1-functors.

Given natural transformations $\alpha,\beta\colon F\tto G$, we write $\alpha\leq\beta$
\[
\begin{tikzcd}[column sep=90pt]
  \cc
  	\ar[r, bend left=25pt, "F", ""' {pos=.4, name=F1}, ""' {pos=.6, name=F2}]
		\ar[r, bend right=25pt, "G"', "" {pos=.4, name=G1}, "" {pos=.6, name=G2}]&
  \dd
  	\ar[from=F1, to=G1, Rightarrow, bend right, "\alpha"', "" name=al]
		\ar[from=F2, to=G2, Rightarrow, bend left, "\,\beta", ""' name=be]
		\ar[from=al, to=be, phantom, "\leq"]
\end{tikzcd}
\]
 if there is an inequality $\alpha_c\leq\beta_c$ in $\dd(Fc,Gc)$ for each object $c\in\cc$.

For any po-categories $\cc,\dd$, we denote by $[\cc,\dd]$ the po-category of po-functors and natural transformations; we call it the \emph{po-category of po-functors from $\cc$ to $\dd$}.
\end{definition}


\begin{definition}[Adjunction]\label{def.adjunction_in_a_2cat}
Let $\cc$ be a 2-category. A \emph{adjunction} in $\cc$ consists of a pair of objects $c,d\in\cc$, a pair of 1-morphisms $L\colon c\to d$ and $R\colon d\to c$, and a pair of 2-morphisms $\eta\colon
d\tto (L\cp R)$ and $\epsilon\colon (R\cp L)\tto c$, such that a pair of equations hold:
\begin{equation}\label{eqn.adjunction}
  \id_L =
  \begin{aligned}
    \begin{tikzcd}[row sep=4pt, column sep=large]
      c\ar[dr, "L"]\ar[dd, equal]\\
      \ar[r, phantom, pos=.3, "\overset{\eta}{\Longrightarrow}" above=-6pt]&
      d\ar[dl, "R" description]\ar[dd, equal]\\
      c\ar[dr, "L"']\ar[r, phantom, pos=.7, "\underset{\epsilon}{\Longrightarrow}" below=-5pt]&~\\&
      d
    \end{tikzcd}
  \end{aligned}
  \qquad\qqand\qquad
  \id_R =
  \begin{aligned}
    \begin{tikzcd}[row sep=4pt, column sep=large]
      &
      d\ar[dl, "R"']\ar[dd, equal]\\
      c\ar[dd, equal]\ar[dr, "L" description]\ar[r, phantom, pos=.7, "\overset{\epsilon}{\Longrightarrow}" above=-6pt]&
      ~\\
      \ar[r, phantom, pos=.3, "\underset{\eta}{\Longrightarrow}" below=-7pt]&
      d\ar[dl, "R"]\\
      c
    \end{tikzcd}
  \end{aligned}
\end{equation}
Noting that both $\eta$ and $\epsilon$ always point in the direction of the left
adjoint $L$, we write
\[\adj{c}{L}{R}{d}\] 
to denote an adjunction, where the 2-arrow points in the direction of the left
adjoint.
\end{definition}

\begin{lemma}\label{lemma.funs_ladjs}
2-functors preserve left adjoints and right adjoints.
\end{lemma}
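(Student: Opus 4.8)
The plan is to observe that an adjunction, as set out in \cref{def.adjunction_in_a_2cat}, is assembled entirely from data and equations that a strict 2-functor preserves on the nose: a pair of 1-morphisms, a pair of 2-morphisms, and the two triangle identities \eqref{eqn.adjunction}. I would therefore simply transport a given adjunction across the 2-functor and check that the image is again an adjunction.

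In detail, let $F\colon\cc\to\dd$ be a 2-functor and suppose we are given an adjunction in $\cc$ consisting of objects $c,d$, 1-morphisms $L\colon c\to d$ and $R\colon d\to c$, and 2-morphisms $\eta$ and $\epsilon$ as in \cref{def.adjunction_in_a_2cat}. Applying $F$ produces 1-morphisms $FL\colon Fc\to Fd$ and $FR\colon Fd\to Fc$ together with 2-morphisms $F\eta$ and $F\epsilon$. Because $F$ preserves identity 1-morphisms and composites of 1-morphisms, we have $F\id_c=\id_{Fc}$, $F\id_d=\id_{Fd}$, $F(L\cp R)=FL\cp FR$, and $F(R\cp L)=FR\cp FL$; hence $F\eta$ and $F\epsilon$ have precisely the sources and targets needed to serve as a unit and counit for the pair $(FL,FR)$. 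What remains is to verify that $FL$ and $FR$ satisfy the two equations of \eqref{eqn.adjunction}.

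This final verification is where the (slight) content lies, and it is exactly the place strictness is used. Each side of each triangle identity is a pasting composite of $\eta$ or $\epsilon$ with identity 2-morphisms; that is, it is built from vertical composition, horizontal composition, and whiskering of 2-cells. A strict 2-functor preserves vertical and horizontal composition as well as identity 2-cells, and hence preserves whiskering too, since whiskering is just horizontal composition against an identity 2-cell. Consequently $F$ carries each side of each equation in \eqref{eqn.adjunction} to the corresponding side of the analogous equation for $(FL,FR,F\eta,F\epsilon)$, and since $F$ is a well-defined function on 2-morphisms, the equalities that hold in $\cc$ force their images to be equal in $\dd$. This exhibits $FL$ as a left adjoint (with right adjoint $FR$) and $FR$ as a right adjoint (with left adjoint $FL$) in $\dd$; as the original adjunction was arbitrary, $F$ preserves both left adjoints and right adjoints. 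The only place one must tread carefully is the preservation of the pasting composites in the triangle identities, but as noted this is immediate from the strictness hypothesis recorded in the Notation, so I expect no genuine obstacle.
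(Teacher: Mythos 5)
Your argument is correct and is exactly the standard one; the paper in fact states this lemma without proof, treating it as the routine observation that a strict 2-functor preserves the 1-cells, 2-cells, and pasting equations \eqref{eqn.adjunction} constituting an adjunction. Nothing is missing, and your care about strictness in transporting the triangle identities is precisely the right (and only) point of substance.
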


Note that in any 2-category, any two right adjoints to a 1-morphism $L$ are isomorphic, so in a po-category, a 1-morphism has \emph{at most one} right adjoint.

\begin{proposition}[Adjoint natural transformations]\label{prop.adj_in_pocat}
Let $\cc$ and $\dd$ be po-categories, and suppose given a pair of morphisms in the po-category $[\cc,\dd]$ as follows:
\[
\begin{tikzcd}
  F\ar[r, shift left, "\lambda"]&
  G.\ar[l, shift left, "\rho"]
\end{tikzcd}\]
Then $\lambda$ is left adjoint to $\rho$ iff the component 1-morphism $\lambda_c\colon F(c)\to G(c)$ is left adjoint to $\rho_c\colon G(c)\to F(c)$ in $\dd$ for each $c\in\cc$.
\end{proposition}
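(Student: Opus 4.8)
The plan is to reduce each side of the asserted equivalence to one and the same pair of componentwise inequalities, using the fact that in a po-category an adjunction carries no data beyond its unit and counit.

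First I would record a general principle valid in any po-category. Because every hom-poset is thin---there is at most one 2-morphism between any fixed pair of parallel 1-morphisms---any diagram of 2-morphisms commutes automatically, so the triangle identities of \cref{def.adjunction_in_a_2cat} are satisfied as soon as the unit and counit exist. Hence a 1-morphism $L\colon x\to y$ is left adjoint to $R\colon y\to x$ precisely when the two inequalities $\id_x\leq L\cp R$ and $R\cp L\leq\id_y$ hold in the relevant hom-posets; there is nothing further to verify. I would then invoke this principle twice: once in the po-category $[\cc,\dd]$ and once, for each $c\in\cc$, in $\dd$.

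Next I would unwind the structure of $[\cc,\dd]$. By the definition of the po-category of po-functors, composition and identities there are computed componentwise, so $(\lambda\cp\rho)_c=\lambda_c\cp\rho_c$ and $(\id_F)_c=\id_{F(c)}$, while the order is pointwise: $\alpha\leq\beta$ in $[\cc,\dd]$ means $\alpha_c\leq\beta_c$ in $\dd$ for every $c$. Consequently the defining inequalities for $\lambda\dashv\rho$ in $[\cc,\dd]$,
\[
  \id_F\leq\lambda\cp\rho\qquad\text{and}\qquad\rho\cp\lambda\leq\id_G,
\]
hold if and only if, for every $c\in\cc$, the inequalities $\id_{F(c)}\leq\lambda_c\cp\rho_c$ and $\rho_c\cp\lambda_c\leq\id_{G(c)}$ hold in $\dd$. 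Applying the opening principle inside $\dd$ identifies this last pair, for fixed $c$, with the statement that $\lambda_c$ is left adjoint to $\rho_c$. Chaining the equivalences proves both directions of the Proposition at once.

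I do not anticipate a serious obstacle. Naturality of $\lambda$ and $\rho$ is already part of the hypothesis that they are 1-morphisms of $[\cc,\dd]$, and the unit and counit require no separate naturality check, since a 2-morphism in a po-category is merely an inequality between parallel natural transformations. The one point genuinely deserving care is the opening principle---that thinness of the hom-posets renders the triangle identities free---so I would establish it explicitly at the outset, as it is the sole place where local posetality (rather than mere 2-categoricity) is essential.
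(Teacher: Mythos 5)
Your proof is correct and is essentially the paper's own argument spelled out: the paper's one-line proof likewise rests on the observation that in a po-category the triangle identities are automatic (``uniqueness of 2-morphisms''), so an adjunction reduces to the unit and counit inequalities, which in $[\cc,\dd]$ are by definition checked componentwise. The only cosmetic difference is that the paper cites the forward direction as a general 2-categorical fact (evaluation preserves adjunctions) rather than deriving it from posetality as you do; both are fine.
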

\begin{proof}
The forward direction is true even for 2-categories that aren't locally posetal; the backwards direction holds by the uniqueness of 2-morphisms in a po-category.
\end{proof}

\section{Symmetric monoidal po-categories}

In this paper, we will have a great deal of use for symmetric monoidal po-categories $\cc$. Strictly speaking, these are 3-categories, but they are ``petite" in two directions: they are locally posetal, and they have only one object. It is conceptually simpler to think of $\cc$ as a 1-category with extra structure: hom-sets are equipped with an order, and there is a symmetric monoidal operation on objects and morphisms. 

\begin{definition}
A \emph{symmetric monoidal structure} on a po-category $\cc$ consists of a symmetric monoidal structure on its underlying 1-category, such that $(f_1\otimes g_1)\leq(f_2\otimes g_2)$ whenever $f_1\leq f_2$ and $g_1\leq g_2$.

A \emph{strong symmetric monoidal po-functor} is simply a po-functor whose underlying 1-functor is equipped with a strong symmetric monoidal structure.
\end{definition}

If $F\colon(\cat{C},I,\otimes)\to(\cat{D},J,\odot)$ is a strong monoidal functor, we refer to the coherence maps $\varphi\colon J\to F(I)$ and $\varphi\colon F(c)\odot F(c')\to F(c\otimes c')$ for each $c,c'$ as the \emph{strongators} for $F$.

If $m,n\in\nn$ are natural numbers, and $c\colon \ord{m}\times \ord{n}\to\cc$ is a family of objects in $\cc$, we have a natural isomorphism
\begin{equation}\label{eqn.symmetry}
\sigma\colon
\bigotimes_{i\in\ord{m}}\bigotimes_{j\in\ord{n}}c(i,j)\Too{\cong}
\bigotimes_{j\in\ord{n}}\bigotimes_{i\in\ord{m}}c(i,j).
\end{equation}
We refer to $\sigma$ as the \emph{symmetry} isomorphism, though note that it involves associators and unitors too, not just the symmetric braiding. We will be interested in two particular cases of the isomorphism \cref{eqn.symmetry}, namely for $m=2$ and $m=0$ and any $n\in\nn$:
\[\sigma\colon c_1\tpow{n}\otimes c_2\tpow{n}\Too{\cong}(c_1\otimes c_2)\tpow{n}
\qqand
\sigma\colon I\Too{\cong} I\tpow{n}.
\]

\begin{definition}[Po-prop]\label{def.props}
A \emph{po-prop} is a symmetric strict monoidal po-category $\pp$ whose monoid of objects is equal to $(\nn,0,+)$.

A \emph{functor} $F\colon\pp\to\qq$ between po-props is an identity-on-objects monoidal functor.
\end{definition}


\begin{proposition}\label{prop.ladj_smc}
For any po-category $\cc$, the left-adjoints in $\cc$ form a wide subcategory $\ladj(\cc)\ss\cc$, and there is an isomorphism of categories $\ladj(\cc)\cong\radj(\cc)\op$. If $\cc$ is symmetric monoidal then so are $\ladj(\cc)$ and $\ladj(\cc)\cong\radj(\cc)\op$.

Moreover, any (symmetric monoidal) po-functor $F\colon\cc\to\cc'$ induces a (symmetric monoidal) functor $\ladj(F)\colon\ladj(\cc)\to\ladj(\cc')$, and any left adjoint natural transformation $\alpha\colon F\to G$ induces a natural transformation $\ladj(F)\to\ladj(G)$.
\end{proposition}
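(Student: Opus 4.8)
The plan is to exploit the defining feature of a po-category, that each hom-poset is thin: a 2-morphism $f\tto g$ exists (and is then unique) exactly when $f\leq g$. Two consequences drive everything. First, the triangle equations \eqref{eqn.adjunction} hold automatically once the unit and counit exist, so an adjunction in the sense of \cref{def.adjunction_in_a_2cat} is nothing more than the pair of inequalities witnessing $\eta$ and $\epsilon$. Second, as noted after \cref{lemma.funs_ladjs}, the right adjoint of a given $L$ is unique on the nose. These let me avoid all coherence verifications and reduce the claims to the formal behaviour of adjoints.

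First I would show $\ladj(\cc)$ is a wide subcategory. Every object $c$ carries the identity adjunction $\id_c\dashv\id_c$, so $\ladj(\cc)$ contains all objects and all identities. For closure under composition I invoke the standard fact that adjunctions compose: if $L\dashv R$ and $L'\dashv R'$ are composable, then $L\cp L'$ is left adjoint to $R'\cp R$. In the po-setting this amounts to producing the two required inequalities from those of the factors, the triangle identities coming for free from thinness. The same computation shows $L\mapsto R$ fixes objects and reverses composition; since right adjoints are unique, this is a well-defined functor $\ladj(\cc)\to\radj(\cc)\op$, and the dual assignment $R\mapsto L$ is a strict two-sided inverse, giving the isomorphism of categories.

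For the monoidal statement I would first note that $\otimes$ is monotone in each variable, hence a po-bifunctor and so a 2-functor; by \cref{lemma.funs_ladjs} it preserves adjoints, whence $L_1\otimes L_2$ is left adjoint to $R_1\otimes R_2$ whenever $L_i\dashv R_i$. Thus $\ladj(\cc)$ is closed under $\otimes$, the unit lies in it via $\id_I\dashv\id_I$, and every coherence isomorphism (associator, unitors, braiding) is a left adjoint because any isomorphism $u$ trivially satisfies $u\dashv u\inv$. Hence the symmetric monoidal structure of $\cc$ restricts to $\ladj(\cc)$, and because $L_1\otimes L_2$ has right adjoint $R_1\otimes R_2$, the isomorphism $\ladj(\cc)\cong\radj(\cc)\op$ is itself monoidal.

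Finally, a po-functor $F\colon\cc\to\cc'$ is monotone, hence a 2-functor, so \cref{lemma.funs_ladjs} again gives that it preserves adjoints; defining $\ladj(F)$ as the restriction of $F$ to left-adjoint morphisms yields a functor $\ladj(\cc)\to\ladj(\cc')$, symmetric monoidal when $F$ is. For a left adjoint natural transformation $\alpha\colon F\to G$, \cref{prop.adj_in_pocat} shows each component $\alpha_c$ is a left adjoint in $\cc'$, hence a morphism of $\ladj(\cc')$, and the naturality squares of $\alpha$ restrict to the morphisms of $\ladj(\cc)$, so $\alpha$ descends to $\ladj(F)\to\ladj(G)$. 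The only genuine work is the bookkeeping in the monoidal case---verifying that the product of two adjunctions is again an adjunction compatibly with the coherence data---which I expect to be the main, though still routine, obstacle; everything else is forced by thinness of the hom-posets and \cref{lemma.funs_ladjs}.
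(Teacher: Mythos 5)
Your proposal is correct and follows essentially the same route as the paper's proof: identities and composites of adjunctions, uniqueness of right adjoints for the isomorphism with $\radj(\cc)\op$, monoidal products of adjoints, \cref{lemma.funs_ladjs} for functoriality, and \cref{prop.adj_in_pocat} for the 2-cells. You merely spell out a few details the paper leaves implicit (thinness making the triangle identities automatic, and the coherence isomorphisms being left adjoints because isomorphisms are).
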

\begin{proof}
It is easy to check that every identity is a left adjoint and a right adjoint. Left adjoints are closed under composition, with the right adjoint of their composite given by the composite of their right adjoints (paste diagrams in \cref{eqn.adjunction}). Thus we have a 1-category $\ladj(\cc)$, a wide inclusion $\ladj(\cc)\ss\cc$, and an isomorphism of categories $\ladj(\cc)\cong\radj(\cc)\op$. The monoidal product of left adjoints is a left adjoint, with its adjoint the monoidal product of the individual adjoints.

Given a (symmetric monoidal) po-functor $F$ one obtains a (symmetric monoidal) functor $\ladj(F)$ by \cref{lemma.funs_ladjs}. By \cref{prop.adj_in_pocat}, the components $\alpha_c\colon F(c)\to G(c)$ of a left adjoint natural transformation $\alpha\colon F\to G$ are left adjoints, thus defining the components of a natural transformation $\ladj(F)\to\ladj(G)$, and required the naturality squares are inherited from $\alpha$.
\end{proof}

\begin{proposition} \label{prop.cartesian_nat_trans}
Suppose that $\cat{C},\cat{D}$ are cartesian monoidal categories and $F,G\colon\cat{C}\to\cat{D}$ are strong monoidal functors. Then any natural transformation $\alpha\colon F\to G$ is monoidal.
\end{proposition}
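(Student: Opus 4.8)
The plan is to verify the two coherence conditions that make the natural transformation $\alpha$ monoidal, working in diagrammatic order. Writing $\varphi$ and $\psi$ for the strongators of $F$ and $G$, these are the \emph{unit} condition $\varphi_I\cp\alpha_I=\psi_I\colon J\to G(I)$ and the \emph{multiplication} condition $\varphi_{c,c'}\cp\alpha_{c\otimes c'}=(\alpha_c\odot\alpha_{c'})\cp\psi_{c,c'}\colon F(c)\odot F(c')\to G(c\otimes c')$; the guiding idea is that in a cartesian setting the targets and the strongators are rigid enough that bare naturality of $\alpha$ forces both. I would begin by recording what strongness buys us: since $F$ and $G$ are strong, $\varphi_I$ and $\psi_I$ are isomorphisms, so $F(I)$ and $G(I)$ are again terminal objects of $\cat{D}$. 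The unit condition is then immediate, since $\varphi_I\cp\alpha_I$ and $\psi_I$ are parallel maps $J\to G(I)$ into a terminal object.

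The technical core is a lemma asserting that the multiplication strongators are compatible with product projections: if $p_1\colon c\otimes c'\to c$ is the first projection in $\cat{C}$ and $\pi_1\colon F(c)\odot F(c')\to F(c)$ that in $\cat{D}$, then $\varphi_{c,c'}\cp F(p_1)=\pi_1$, and symmetrically for the second projections. To prove it I would use the cartesian factorization $p_1=(\id_c\otimes{!}_{c'})\cp\rho_c$, where ${!}_{c'}\colon c'\to I$ is the unique map to the terminal object and $\rho$ is the right unitor. Naturality of $\varphi$ in its two arguments together with the right-unit coherence axiom for $F$ then reduces the claim to the identity $F({!}_{c'})={!}_{Fc'}\cp\varphi_I$ of maps $F(c')\to F(I)$, which holds because $F(I)$ is terminal. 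The identical computation applies to $G$.

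With the lemma in hand, the multiplication condition follows by comparing projections. Because $G$ is strong, $\psi_{c,c'}$ is invertible and $\psi_{c,c'}\cp G(p_i)=\pi_i$, so $G(p_1)$ and $G(p_2)$ are, up to the isomorphism $\psi_{c,c'}$, exactly the two projections out of $G(c)\odot G(c')$; in particular they are jointly monic. It therefore suffices to check that the two sides agree after postcomposition with $G(p_1)$ and with $G(p_2)$. For $G(p_1)$: naturality of $\alpha$ at $p_1$ turns $\varphi_{c,c'}\cp\alpha_{c\otimes c'}\cp G(p_1)$ into $\varphi_{c,c'}\cp F(p_1)\cp\alpha_c$, which the lemma rewrites as $\pi_1\cp\alpha_c$; on the other side the lemma for $G$ gives $(\alpha_c\odot\alpha_{c'})\cp\psi_{c,c'}\cp G(p_1)=(\alpha_c\odot\alpha_{c'})\cp\pi_1$, and this equals $\pi_1\cp\alpha_c$ by naturality of the projection (here the relevant $\pi_1$ is read off its domain). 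The case of $G(p_2)$ is identical, so the two composites coincide and the multiplication condition holds.

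I expect the projection-compatibility lemma, together with the observation that $G(p_1),G(p_2)$ are jointly monic, to be the only real obstacle: these are precisely what let us trade the monoidal coherence squares for ordinary naturality of $\alpha$. Everything else is formal, and the symmetry of the monoidal products is never invoked beyond what is already packaged into the cartesian structure.
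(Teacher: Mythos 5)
Your proof is correct and is precisely a detailed unwinding of the paper's one-line argument, which simply cites the universal properties of terminal objects and products. The unit condition via terminality of $G(I)$, the projection-compatibility of the strongators, and the joint monicity of $G(p_1),G(p_2)$ are exactly the ingredients the paper leaves implicit.
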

\begin{proof}
This follows from the universal properties of terminal objects and products.
\end{proof}

\chapter{Regular categories and their relations}\label{chap.reg_cats_rels}

The main characters of this paper are two identical twins: regular categories and and relational po-categories. In this section \ we discuss the former. In \cref{sec.regcats} we review the axioms for regular categories $\cat{R}$, and then in \cref{sec.pocat_rels} we describe the po-category $\rrel(\cat{R})$ of relations in $\cat{R}$. In \cref{chap.reg2cats} we will axiomatize those po-categories that arise in this way as \emph{relational po-categories}, our second main character.

\section{The 2-category of regular categories}\label{sec.regcats}

The definition of regular category collects a few notions that make sense in any category.

\begin{definition}[Extremal epimorphism]
Let $\cat{C}$ be a category and $e\colon r\to s$ a morphism. We say $e$ an \emph{extremal epimorphism} if it has the property that whenever $e=f \cp m$ is a factorization of $e$, and $m$ is a monomorphism, then in fact $m$ is an isomorphism.
\end{definition}

\begin{definition}[Image factorization]
Given a morphism $f\colon r\to s$, we say that a factorization $f=e\cp m$ is an \emph{image factorization} if $m$ is a monomorphism and $e$ is an extremal epimorphism.
\end{definition}

\begin{definition}[$\rrgcat$] \label{def.regcat}
A category $\cat{R}$ is called \emph{regular} if:
  \begin{enumerate}[label=(\roman*)]
	\item it has all finite limits;
	\item every morphism in $\cat{R}$ has an image factorization; and
	\item extremal epimorphisms are pullback stable. That is, for any pullback diagram
	\[
	\begin{tikzcd}
		r'\ar[r, "e'"]\ar[d]\pb&
		s'\ar[d]\\
		r\ar[r, ->>, "e"']&
		s
	\end{tikzcd}
	\]
	if $e$ is an extremal epimorphism then so is $e'$.
\end{enumerate}
A functor $F\colon\cat{R}\to\cat{R'}$ is called \emph{regular} if it preserves finite limits and extremal epimorphisms.

We denote by $\rrgcat$ the 2-category whose objects are regular categories, whose morphisms are regular functors, and whose 2-morphisms are natural transformations.\footnote{Note that no additional property is needed here: it is automatically the case that these natural transformations are monoidal with respect to the cartesian monoidal structure; see \cref{prop.cartesian_nat_trans}.}
\end{definition}

\begin{example}
The categories $\finset$, $\finset\op$, are both regular. In fact any topos is regular, as is its opposite. Any category monadic over $\smset$ is regular. 
\end{example}

\begin{remark}
  There is an issue of size relevant to the constructions in this paper. A category is called \emph{well powered} if every object has a \emph{set} of subobjects. We shall assume all our regular categories are well powered.
\end{remark}

\begin{remark}\label{rem.standard_abuse}
Given two objects $r,s$ in a regular category $\cat{R}$, we follow the standard category-theoretic style and allow ourselves to write $r\times s$ to denote their product. Implicitly, this---together with a choice $1$ of terminal object---means we are \emph{choosing} a monoidal structure $(1,\times)$ on $\cat{R}$; and this of course involves the axiom of choice. In order to use the $\times$ symbol, it would be slightly more honest to define a regular category to be a cartesian monoidal category with equalizers and image factorizations---i.e.\ to make the monoidal structure explicit---but we decided to go along with the standard definition and the standard use of choice. One could avoid choice using the approach of \cite{makkai1996avoiding}.
\end{remark}

\begin{proposition}
In a regular category, if $f=e\cp m$ is an image factorization and $f=g_1\cp g_2$ is any other factorization, then there exists a dotted lift making the following diagram commute:
\[
\begin{tikzcd}
	r\ar[r, "g_1"]\ar[d, ->>, "e"']&
	x\ar[d, >->, "g_2"]\\
	\im(f)\ar[ur, dashed, "h" description]\ar[r, >->, "m"']&
	s
\end{tikzcd}
\]
\end{proposition}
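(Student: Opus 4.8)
The plan is to recognize the statement as the diagonal fill-in (orthogonality) lemma, namely that an extremal epimorphism is left orthogonal to any monomorphism; here the right-hand map $g_2$ is monic, as indicated in the diagram. First I would note that the outer square commutes, since $e\cp m = f = g_1\cp g_2$, so that the pair $e\colon r\to\im(f)$ and $g_1\colon r\to x$ constitutes a cone over the cospan $\im(f)\xrightarrow{m}s\xleftarrow{g_2}x$.

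Next I would form the pullback $P$ of $m$ and $g_2$ over $s$, which exists by the finite-limits axiom of \cref{def.regcat}. Writing $p\colon P\to\im(f)$ and $q\colon P\to x$ for the two projections, the projection $p$ is a monomorphism, being the pullback of the monomorphism $g_2$. The cone above then induces a unique mediating morphism $k\colon r\to P$ satisfying $k\cp p = e$ and $k\cp q = g_1$.

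The crux of the argument is the following observation. The equation $e = k\cp p$ exhibits the extremal epimorphism $e$ as factoring through the monomorphism $p$; by the very definition of extremal epimorphism, $p$ must therefore be an isomorphism. I then set $h\coloneqq p\inv\cp q\colon\im(f)\to x$. Using the pullback identity $q\cp g_2 = p\cp m$ gives $h\cp g_2 = p\inv\cp p\cp m = m$, while $e = k\cp p$ gives $e\cp h = k\cp p\cp p\inv\cp q = k\cp q = g_1$, so both triangles commute. If uniqueness is desired, it is immediate, since any two lifts agree after postcomposition with the monomorphism $g_2$.

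The hard part will be nothing more than spotting that the extremal-epi hypothesis is precisely what forces the pullback projection $p$ to be invertible; the remaining steps are routine applications of the universal property of the pullback. Note in particular that neither the monicity of $m$ nor the pullback-stability of extremal epimorphisms (\cref{def.regcat}(iii)) is used here — only finite limits, the monicity of $g_2$, and the definition of extremal epimorphism are needed.
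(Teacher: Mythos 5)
Your proposal is correct and is exactly the paper's argument, just written out in full: the paper's proof likewise pulls back $g_2$ along $m$, observes the resulting projection is monic and hence an isomorphism by extremality of $e$, and builds $h$ from its inverse and the other projection. Your verification of the two triangle identities and the remark about which axioms are actually needed are accurate elaborations of the same route.
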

\begin{proof}
The pullback of $g_2$ along $m$ is monic and hence an isomorphism since $e$ is extremal. Use its inverse and a projection map to construct $h$.
\end{proof}

\section{The po-category of relations}\label{sec.pocat_rels}

An important property---arguably the key property---of regular categories is that they support a well-behaved notion of relation. A relation between objects $r$ and $s$ in a regular category $\cat{R}$ is a subobject of the product $r \times s$. Equivalently, we may speak in terms of jointly-monic spans.

A jointly-monic span $r \to s$ is a pair of morphisms $r \From{f} x \To{g} s$ such that the pairing $\pair{f,g}\colon x \to r \times s$ is a monomorphism. A morphism from one jointly-monic spans $r \From{f} x \To{g} s$ to another $r \From{f'} x' \To{g'} s$ is a morphism $k\colon x \to x'$ in $\cat{R}$ such that
\begin{equation}\label{eqn.morphisms_order}
  \begin{tikzcd}[row sep=0pt]
    & x \ar[dl, "f"'] \ar[dr, "g"] \ar[dd, pos=.3, "k"]\\
    r & & s \\
    & x' \ar[ul, "f'"] \ar[ur, "g'"'] 
  \end{tikzcd}
\end{equation}
commutes. A \emph{relation} $r \to s$ is an isomorphism class of jointly-monic spans. We will frequently abuse terminology and refer to the representative spans themselves as relations.

It is easy to check that relations $r \to s$ form a poset, where $\pair{f,g} \le \pair{f',g'}$ if there exists a morphism from $\pair{f,g}$ to $\pair{f',g'}$.
Moreover, relations can be composed. Given $r\From{f_1} x\To{g_1} s$ and $s\From{f_2}y\To{g_2}t$, we define their composite by first taking the pullback $x\times_{s}y$, and then taking the image of its induced map to $r\times t$.
The result is independent of the choice of the span representing $x$. Finally, the identity morphism on $r$ is the identity span $r\From{\id}r\To{\id}r$.

\begin{definition} \label{def.relations_construction}
  Let $\cat{R}$ be a regular category. Define its \emph{po-category of relations} $\rrel(\cat{R})$ to be the symmetric monoidal po-category with $\ob(\cat{R})$ as objects, relations as morphisms, composition and identity as above, and monoidal structure given by categorical products in $\cat{R}$; see \cref{rem.standard_abuse}.
\end{definition}


It is well-known that the above description indeed defines a symmetric monoidal po-category---this follows from the universal properties of products, pullbacks, and images, see for example \cite[Theorem 2.8.4]{Borceux:1994b}. Our next goal is to axiomatize the sorts of po-categories that arise in this way.

\chapter{Relational po-categories}\label{chap.reg2cats}

In this section we define the second of our main characters: relational po-categories; these are po-categories for which a certain algebraic structure is coherently supplied to each object, and where every morphism can be factored in a certain way. In order to state a precise definition, we first define a po-prop $\ww$ in \cref{sec.prop_ww} and the notion of supply in \cref{sec.supply}, at which point we can define pre-relational and relational po-categories in \cref{sec.def_reg2cat}. It turns out that even prerelational po-categories have a vast amount of useful structure, which we will study in \cref{chap.relations_functor}.
\section{The po-prop $\ww$ for wiring}\label{sec.prop_ww}

Consider the symmetric monoidal 2-category $(\ccospan\co,\varnothing,+)$, i.e.\ the 2-dual of cospans between finite sets. The full subcategory spanned by finite ordinals $\ord{n}$ is a skeleton, and we denote its local poset reflection by $\ww$. It is a po-prop, and we describe its hom-categories explicitly in \cref{prop.ww_explicit}.


\begin{definition}[$\ww$]\label{def.wwiring}
We refer to the po-prop $\ww$ defined above as the \emph{po-prop for wiring}.
\end{definition}

\begin{remark}
One can motivate the definition of $\ww$ as follows. A regular category $\cat{R}$ has finite products, and thus each object $r$ is equipped with morphisms $\epsilon_r\colon r\to 1$ and $\delta_r\colon r\to r\otimes c$. The category $\finset\op$ is the free finite product category, and---in a sense that we will soon make precise---the theory of comonoids.

We will see that in the bicategory of relations $\rrel(\cat{R})$, morphisms coming from $\cat{R}$ are precisely the left adjoints. It was shown in \cite[Theorem A.2]{Hermida:2000a} that the span construction freely adds right adjoints, subject to the condition that pullbacks in $\cat{R}$ are sent to Beck-Chevalley squares. Since all of our categories are po-categories, we are using the local posetal reflection $\ww$ of $\sspan(\finset\op)$.
\end{remark}

$\ww$ is almost the prop of equivalence relations---see \cite{coya2017corelations}---but without the ``extra'' law, which would equate cospans $0\to 0\from 0$ and $0\to 1\from 0$.

\begin{proposition}\label{prop.ww_explicit}
The hom-posets of $\ww$ admit the following explicit description:
\[
  \ww(m,n)\cong
  \begin{cases}
  	\{0\leq 1\}\op&\tn{ if }m=n=0\\
		\er\op(m+n)&\tn{ if }m+n\geq 1
  \end{cases}
\]
where $\{0\leq 1\}$ is the poset of booleans, and $\er(p)$ is the poset of equivalence relations on the set $\ord{p}$, ordered by inclusion.
\end{proposition}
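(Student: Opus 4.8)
The plan is to unwind the definition of $\ww$ and compute the poset reflection of each hom-category of $\ccospan\co$ by hand. Recall that an object of the hom-category $\ccospan(\ord m,\ord n)$ is a cospan $\ord m\to p\from\ord n$, which---by the universal property of the coproduct $\ord m+\ord n\cong\ord{m+n}$---is the same datum as a single function $\ell_p\colon\ord{m+n}\to p$; a morphism to another cospan with apex $q$ and leg $\ell_q$ is a map of apices $g\colon p\to q$ commuting with the legs, i.e.\ satisfying $g\circ\ell_p=\ell_q$. Passing to $\ccospan\co$ reverses these $2$-cells, and the local poset reflection identifies two cospans exactly when there are cospan-morphisms between them in both directions, ordering the resulting classes by the existence of a single morphism. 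So I need only describe when such morphisms exist.

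First I would treat the case $m+n\geq 1$. The key observation is that a morphism $g\colon p\to q$ is entirely constrained on $\im(\ell_p)$ and free elsewhere, so a morphism $\ell_p\to\ell_q$ exists precisely when $g$ is well defined on the image, i.e.\ when the kernel equivalence relation of $\ell_p$ is contained in that of $\ell_q$ (here one uses that $\im(\ell_q)\neq\varnothing$ when $m+n\geq1$, so the free elements have somewhere to land). In particular every cospan admits morphisms to and from its corestriction $\ord{m+n}\surj\im(\ell_p)$, so in the reflection we may assume $\ell_p$ jointly surjective. Jointly surjective cospans out of $\ord{m+n}$, up to isomorphism of apex, are classified by their kernels, giving a bijection with $\er(m+n)$; and the preceding analysis shows a morphism exists iff the corresponding kernels are nested by inclusion. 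Thus the reflection of $\ccospan(\ord m,\ord n)$ is $(\er(m+n),\ss)$, and reversing $2$-cells for $(-)\co$ yields the opposite order $\er\op(m+n)$, as claimed.

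Finally, the case $m=n=0$ is genuinely different, and is exactly where the reduction above breaks down: the legs are now empty, so there is no constraint at all, $\im(\ell_p)=\varnothing$, and a cospan is simply a finite set (its apex) with morphisms arbitrary functions. Hence the hom-category is $\finset$, whose poset reflection has exactly two classes---the empty set and the nonempty sets, since a function $p\to q$ exists iff $p=\varnothing$ or $q\neq\varnothing$---giving $\{0\leq 1\}$ with $0=[\varnothing]$; reversing for $(-)\co$ gives $\{0\leq 1\}\op$. I expect the only genuine subtleties to be bookkeeping: keeping straight the direction introduced by the $2$-cell reversal $(-)\co$ (which is what turns inclusion of kernels into the \emph{opposite} order), and remembering that the image-reduction argument requires a nonempty target and therefore fails precisely when $m+n=0$, which is exactly why that case must be singled out.
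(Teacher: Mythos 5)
Your proposal is correct and follows essentially the same route as the paper: identify cospans out of $\ord{m}$, $\ord{n}$ with the coslice $\ord{m+n}/\finset$, use the epi--mono factorization together with the retraction of a mono out of a nonempty set to reduce to jointly surjective cospans classified by their kernels, and treat $m+n=0$ separately as the poset reflection of $\finset$. You simply spell out in more detail the kernel-inclusion criterion and the direction-reversal coming from $(-)\co$, both of which the paper leaves implicit.
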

\begin{proof}
For any $m,n$, may identify $\ww(m,n)\op$ with the poset reflection of $\ccospan(\ord{m},\ord{n})$. This is the coslice category $\ord{m+n}/\finset$, consisting of sets $S$ equipped with a function $\ord{m+n}\to S$. If $m+n=0$ this poset reflection is that of $\finset$, namely $\{\varnothing\leq\{1\}\}$; otherwise it may be identified with the poset of equivalence relations on $\ord{m+n}$. Indeed, every function $\ord{m+n}\to S$ factors as an epic followed by a monic, and every monic out of a nonempty set has a retraction.
\end{proof}

The morphisms of $\ww$ can be generated by four morphisms, for which we have special notation and iconography:
\setlength{\belowrulesep}{0pt}
\setlength{\tabcolsep}{8pt}
\renewcommand{\arraystretch}{1.5}
\begin{equation}\label{eqn.generating_wires}
	\begin{tabular}{|c|c|c|}\toprule[1pt]
    Morphism in $\ww$ & Corresponding cospan & Icon \\\toprule[1pt]
    $\epsilon\colon 1\to 0$& 
    	$1\to 1\from 0$&
				\begin{tikzpicture}[WD]
        	\node[link] (epsilon) {};
    			\draw (epsilon) to +(-.8,0);
    		\end{tikzpicture}\\\hline
		$\delta\colon 1\to 2$&
			$1\to 1\from 2$&
				\begin{tikzpicture}[WD]
        	\node[link] (delta) {};
        	\draw (delta) -- +(-1,0);
        	\draw (delta) to[out=60, in=180] +(1,.5);
        	\draw (delta) to[out=-60, in=180] +(1,-.5);
				\end{tikzpicture}\\\hline
		$\eta\colon 0\to 1$&
			$0\to 1\from 1$&
				\begin{tikzpicture}[WD]
        	\node[link] (eta) {};
  	  		\draw (eta) to +(.8,0);
    		\end{tikzpicture}\\\hline
		$\mu\colon 2\to 1$&
			$2\to 1\from 1$&
				\begin{tikzpicture}[WD]
        	\node[link] (mu) {};
        	\draw (mu) -- +(1,0);
        	\draw (mu) to[out=120, in=0] +(-1,.5);
        	\draw (mu) to[out=-120, in=0] +(-1,-.5);
				\end{tikzpicture}\\\toprule[1pt]
	\end{tabular}
\end{equation}
\setlength{\tabcolsep}{3pt}
These generators satisfy various equations and inequalities:
\begin{equation}\label{eqn.equations_wires}
\renewcommand{\arraystretch}{2}
\begin{array}{rl<{\qquad}rl<{\qquad}rl}
	\begin{tikzpicture}[WD]
  	\node[link] (mu) {};
  	\draw (mu) -- +(-1,0);
  	\draw (mu) to[out=60, in=180] +(.5,.5) coordinate (out1);
  	\draw (mu) to[out=-60, in=180] +(.5,-.5) coordinate (out2);
  	\coordinate (end) at ($(out1)+(1,0)$);
  	\draw (out1) to[out=0, in=180] (out2-|end);
  	\draw (out2) to[out=0, in=180] (out1-|end);	
	\end{tikzpicture}
&\;\raisebox{2pt}{=}\quad
	\begin{tikzpicture}[WD]
  	\node[link] (mu) {};
  	\draw (mu) -- +(-1,0);
  	\draw (mu) to[out=60, in=180] +(1,.5);
  	\draw (mu) to[out=-60, in=180] +(1,-.5);
	\end{tikzpicture}
&
  \begin{tikzpicture}[WD]
  	\node[link] (mu) {};
  	\draw (mu) -- +(-1,0);
  	\draw (mu) to[out=60, in=180] +(.5,.5) node[link] {};
  	\draw (mu) to[out=-60, in=180] +(1,-.5) -- +(.5,0);
	\end{tikzpicture}
&\;\raisebox{2pt}{=}\quad
  \begin{tikzpicture}[WD,baseline=-5pt]
		\draw (0,0) -- (2,0);
	\end{tikzpicture}
&
	\begin{tikzpicture}[WD]
  	\node[link] (mu) {};
  	\draw (mu) -- +(-1,0);
  	\draw (mu) to[out=60, in=180] +(.5,.5) node[link] (mu2) {};
  	\draw (mu2) to[out=60, in=180] +(1,.5) coordinate (end);
  	\draw (mu2) to[out=-60, in=180] +(1,-.5);
  	\draw (mu) to[out=-60, in=180] +(1,-.5) coordinate (h);
		\draw (h) -- (h-|end);
	\end{tikzpicture}
&\;\raisebox{2pt}{=}\quad
	\begin{tikzpicture}[WD]
  	\node[link] (mu) {};
  	\draw (mu) -- +(-1,0);
  	\draw (mu) to[out=-60, in=180] +(.5,-.5) node[link] (mu2) {};
  	\draw (mu2) to[out=60, in=180] +(1,.5) coordinate (end);
  	\draw (mu2) to[out=-60, in=180] +(1,-.5);
  	\draw (mu) to[out=60, in=180] +(1,.5) coordinate (h);
		\draw (h) -- (h-|end);
	\end{tikzpicture}
\\
	\begin{tikzpicture}[WD]
  	\node[link] (mu) {};
  	\draw (mu) -- +(1,0);
  	\draw (mu) to[out=120, in=0] +(-.5,.5) coordinate (out1);
  	\draw (mu) to[out=-120, in=0] +(-.5,-.5) coordinate (out2);
  	\coordinate (end) at ($(out1)+(-1,0)$);
  	\draw (out1) to[out=180, in=0] (out2-|end);
  	\draw (out2) to[out=180, in=0] (out1-|end);	
	\end{tikzpicture}
&\;\raisebox{2pt}{=}\quad
	\begin{tikzpicture}[WD]
  	\node[link] (mu) {};
  	\draw (mu) -- +(1,0);
  	\draw (mu) to[out=120, in=0] +(-1,.5);
  	\draw (mu) to[out=-120, in=0] +(-1,-.5);
	\end{tikzpicture}
&
  \begin{tikzpicture}[WD]
  	\node[link] (mu) {};
  	\draw (mu) -- +(1,0);
  	\draw (mu) to[out=120, in=0] +(-.5,.5) node[link, anchor=center] {};
  	\draw (mu) to[out=-120, in=0] +(-1,-.5) -- +(-.5,0);
 	\end{tikzpicture}
&\;\raisebox{2pt}{=}\quad
	\begin{tikzpicture}[WD,baseline=-5pt]
		\draw (0,0) -- (2,0);
	\end{tikzpicture}
&
	\begin{tikzpicture}[WD]
  	\node[link] (mu) {};
  	\draw (mu) -- +(1,0);
  	\draw (mu) to[out=120, in=0] +(-.5,.5) node[link, anchor=center] (mu2) {};
  	\draw (mu2) to[out=120, in=0] +(-1,.5) coordinate (end);
  	\draw (mu2) to[out=-120, in=0] +(-1,-.5);
  	\draw (mu) to[out=-120, in=0] +(-1,-.5) coordinate (h);
		\draw (h) -- (h-|end);
	\end{tikzpicture}
&\;\raisebox{2pt}{=}\quad
	\begin{tikzpicture}[WD]
  	\node[link] (mu) {};
  	\draw (mu) -- +(1,0);
  	\draw (mu) to[out=-120, in=0] +(-.5,-.5) node[link, anchor=center] (mu2) {};
  	\draw (mu2) to[out=120, in=0] +(-1,.5) coordinate (end);
  	\draw (mu2) to[out=-120, in=0] +(-1,-.5);
  	\draw (mu) to[out=120, in=0] +(-1,.5) coordinate (h);
		\draw (h) -- (h-|end);
	\end{tikzpicture}
\\
	\begin{tikzpicture}[WD]
		\node[link] (delta) {};
    \draw (delta) -- +(-.75,0);
    \draw (delta) to[out=60, in=180] +(.5,.5) coordinate (out1);
    \draw (delta) to[out=-60, in=180] +(.5,-.5) coordinate (out2);
    \node[link, right=1 of delta] (mu) {};
		\draw (mu) -- +(.75,0);
    \draw (mu) to[out=120, in=0] +(-.5,.5) -- (out1);
    \draw (mu) to[out=-120, in=0] +(-.5,-.5) -- (out2);
	\end{tikzpicture}
&\;\raisebox{2pt}{=}\quad
	\begin{tikzpicture}[WD,baseline=-5pt]
 		\draw (0,0) -- (1.5,0);	
	\end{tikzpicture}
&
\multicolumn{4}{c}{$
  \begin{tikzpicture}[WD]
    \coordinate (htop);
    \coordinate[below=.7 of htop] (hmid);
    \coordinate[below=.7 of hmid] (hbot);
 		\node[link, left=.5] (dotL) at ($(htop)!.5!(hmid)$) {};
    \node[link, right=.5] (dotR) at ($(hbot)!.5!(hmid)$) {};
 		\draw (dotL) -- +(-1,0) coordinate (l);
    \draw (dotR) -- +(1,0) coordinate (r);
 		\draw (dotL) to[out=60, in=180] (htop);
    \draw (dotL) to[out=-60, in=180] (hmid);
 		\draw (hmid) to[out=0, in=120] (dotR);
    \draw (hbot) to[out=0, in=-120] (dotR);
 		\draw (htop) -- (htop-|r);
  	\draw (hbot) -- (hbot-|l);
  \end{tikzpicture}
\;\raisebox{2pt}{=}\quad
	\begin{tikzpicture}[WD]
    \node[link] (delta) {};
   	\draw (delta) -- +(-.5,0) coordinate (end);
    \draw (delta) to[out=60, in=180] +(1,.5);
   	\draw (delta) to[out=-60, in=180] +(1,-.5);
    \node[link, left=1 of delta] (mu) {};
   	\draw (mu) -- (end);
    \draw (mu) to[out=120, in=0] +(-1,.5);
   	\draw (mu) to[out=-120, in=0] +(-1,-.5);
	\end{tikzpicture}
\;\raisebox{2pt}{=}\quad
  \begin{tikzpicture}[WD]
  	\coordinate (htop);
    \coordinate[below=.7 of htop] (hmid);
    \coordinate[below=.7 of hmid] (hbot);
    \node[link, left=.5] (dotL) at ($(hbot)!.5!(hmid)$) {};
    \node[link, right=.5] (dotR) at ($(htop)!.5!(hmid)$) {};
    \draw (dotL) -- +(-1,0) coordinate (l);
    \draw (dotR) -- +(1,0) coordinate (r);
    \draw (dotL) to[out=-60, in=180] (hbot);
    \draw (dotL) to[out=60, in=180] (hmid);
    \draw (hmid) to[out=0, in=-120] (dotR);
    \draw (htop) to[out=0, in=120] (dotR);
    \draw (htop) -- (htop-|l);
    \draw (hbot) -- (hbot-|r);
  \end{tikzpicture}
$}
\\
			\begin{tikzpicture}[WD,baseline=-5pt]
  			\draw (0,0) -- (1.5,0);
			\end{tikzpicture}			
&\;\raisebox{2pt}{$\leq$}\quad
			\begin{tikzpicture}[WD,baseline=-5pt]
      	\node[link] (epsilon) {};
      	\draw (epsilon) to +(-.8,0);
      	\node[link, right=.5 of epsilon] (eta) {};
      	\draw (eta) to +(.8,0);
			\end{tikzpicture}
&
			\begin{tikzpicture}[WD,baseline=-5pt]
      	\node[link] (epsilon) {};
      	\draw (epsilon) to +(-.8,0);
      	\node[link, left=1 of epsilon] (eta) {};
      	\draw (eta) to +(.8,0);
			\end{tikzpicture}
&\;\raisebox{2pt}{$\leq$}\quad
\id_0
&
			\begin{tikzpicture}[WD]
      	\node[link] (delta) {};
      	\draw (delta) -- +(-.5,0) coordinate (end);
      	\draw (delta) to[out=60, in=180] +(1,.5);
      	\draw (delta) to[out=-60, in=180] +(1,-.5);
      	\node[link, left=1 of delta] (mu) {};
      	\draw (mu) -- (end);
      	\draw (mu) to[out=120, in=0] +(-1,.5);
      	\draw (mu) to[out=-120, in=0] +(-1,-.5);
			\end{tikzpicture}
&\;\raisebox{2pt}{$\leq$}\quad
			\begin{tikzpicture}[WD]
   			\draw (0, 0) -- (1.5, 0);	
   			\draw (0, .75) -- (1.5, .75);		
			\end{tikzpicture}
\end{array}
\end{equation}

We refer to $\eta\cp\delta$ and $\mu\cp\epsilon$ as the \emph{cup} and \emph{cap}; they are depicted as follows:
\begin{equation}\label{eqn.cap_cup}
\begin{array}{cccc}
	\begin{tikzpicture}[inner WD]
		\draw (.5, .5) -- (0,.5) to[out=180, in=180] (0, -.5) -- (.5, -.5);
	\end{tikzpicture}	
	&\;\raisebox{2pt}{$\coloneqq$}\quad
	\begin{tikzpicture}[WD]
  	\node[link] (mu) {};
		\node[link, left=.3 of mu] (eta) {};
  	\draw (mu) -- (eta);
  	\draw (mu) to[out=60, in=180] +(.5,.5) -- +(.25,0);
  	\draw (mu) to[out=-60, in=180] +(.5,-.5) -- +(.25,0);
	\end{tikzpicture}
	&\qqand
	\begin{tikzpicture}[inner WD]
		\draw (-.5, .5) -- (0,.5) to[out=0, in=0] (0, -.5) -- (-.5, -.5);
	\end{tikzpicture}	
	&\;\raisebox{2pt}{$\coloneqq$}\quad
	\begin{tikzpicture}[WD]
  	\node[link] (mu) {};
		\node[link, right=.3 of mu] (eta) {};
  	\draw (mu) -- (eta);
  	\draw (mu) to[out=120, in=0] +(-.5,.5) -- +(-.25,0);
  	\draw (mu) to[out=-120, in=0] +(-.5,-.5) -- +(-.25,0);
	\end{tikzpicture}	
\end{array}
\end{equation}

\renewcommand{\arraystretch}{1}
The equations in the first and second lines of \cref{eqn.equations_wires} are called \emph{commutativity, unitality, associativity} for comonoids and monoids, respectively. The equations in the next line are called the \emph{special law} and the \emph{frobenius law}. We refer to the inequalities in the last line as the \emph{adjunction inequalities}, because they show up as the unit and counit of adjunctions, as we see next in \cref{prop.left_adj_in_ww}.

\begin{proposition}\label{prop.left_adj_in_ww}
With notation as in \eqref{eqn.generating_wires}, there are adjunctions
\[
  \adj{1}{\epsilon}{\eta}{0}
  \qqand
  \adj{1}{\delta}{\mu}{2}.
\]
\end{proposition}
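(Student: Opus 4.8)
The plan is to exploit the fact that $\ww$ is a po-category, so that an adjunction there reduces to a single matching pair of inequalities. Concretely, I would first recall from \cref{def.adjunction_in_a_2cat} that an adjunction $\epsilon\dashv\eta$ consists of a unit, a counit, and the two triangle identities of \cref{eqn.adjunction}. In a po-category the triangle identities come for free: each is an equation between two 2-morphisms sharing their source and target 1-morphism (either both $L\Rightarrow L$ or both $R\Rightarrow R$), and since any hom-poset contains at most one 2-morphism between two given 1-morphisms, both sides automatically agree. Thus the only thing to supply in each case is the unit and the counit inequality.

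For the first adjunction I would unwind \cref{def.adjunction_in_a_2cat} with $L=\epsilon\colon 1\to 0$ and $R=\eta\colon 0\to 1$: the unit is the inequality $\id_1\leq\epsilon\cp\eta$ and the counit is $\eta\cp\epsilon\leq\id_0$. Both of these appear verbatim among the adjunction inequalities in the last line of \cref{eqn.equations_wires}, namely the two inequalities built only from $\epsilon$ and $\eta$, so this adjunction is immediate.

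For the second adjunction I would take $L=\delta\colon 1\to 2$ and $R=\mu\colon 2\to 1$, so that the required counit is $\mu\cp\delta\leq\id_2$ and the required unit is $\id_1\leq\delta\cp\mu$. The counit is exactly the final adjunction inequality of \cref{eqn.equations_wires}. The unit I would obtain for free from the special law $\delta\cp\mu=\id_1$ recorded there, which in particular yields $\id_1\leq\delta\cp\mu$ (with equality).

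I do not expect any genuine obstacle: the statement is essentially a bookkeeping exercise that reads the unit and counit off the relations already collected in \cref{eqn.equations_wires}. Should one instead wish to avoid citing those relations, the only step demanding real work would be re-deriving the four relevant (in)equalities from the explicit description of the hom-posets in \cref{prop.ww_explicit}; there I would compute the composite cospans $\epsilon\cp\eta$, $\eta\cp\epsilon$, $\delta\cp\mu$ and $\mu\cp\delta$ as pushouts and compare the associated equivalence relations on $\ord{m+n}$ with those of the relevant identities. That comparison is the one slightly calculational step, but it is routine.
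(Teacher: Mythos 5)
Your proposal is correct and follows essentially the same route as the paper: the paper's proof simply observes that the required unit and counit (in)equalities---$\id_1\leq\epsilon\cp\eta$, $\eta\cp\epsilon\leq\id_0$, $\mu\cp\delta\leq\id_2$, and the special law $\delta\cp\mu=\id_1$---are already recorded in \eqref{eqn.equations_wires}, themselves verified by computing in $\ccospan\co$. Your additional remark that the triangle identities hold automatically in a po-category is exactly the observation the paper makes when it later introduces \eqref{eqn.show_adjoints}, so nothing is missing.
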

\begin{proof}
The inequalities $\id_1\leq(\epsilon\cp\eta)$,\quad $(\eta\cp\epsilon)\leq\id_0$,\quad $\id_2\leq(\delta\cp\mu)$, and the equation $\id_1=(\mu\cp\delta)$ are all shown in \eqref{eqn.equations_wires}, which itself is proved via computations in $\ccospan\co$.
\end{proof}

\begin{remark}\label{rem.surprising_inequality}
The perhaps surprising inequality $(\mu\cp\delta)\leq\id_1$, i.e.\ the one not coming from adjointness, is in fact derivable from the rest of the structure:
  \[
    \begin{tikzpicture}
      \node (P1) {
        \begin{tikzpicture}[WD]
          \node[link] (delta) {};
          \draw (delta) -- +(-.75,0);
          \draw (delta) to[out=60, in=180] +(.5,.5) coordinate (out1);
          \draw (delta) to[out=-60, in=180] +(.5,-.5) coordinate (out2);
          \node[link, right=1 of delta] (mu) {};
          \draw (mu) -- +(.75,0);
          \draw (mu) to[out=120, in=0] +(-.5,.5) -- (out1);
          \draw (mu) to[out=-120, in=0] +(-.5,-.5) -- (out2);
        \end{tikzpicture}
      };
      \node (P2) [right=1 of P1] {
        \begin{tikzpicture}[WD]
          \node[link] (delta) {};
          \draw (delta) -- +(-.75,0);
          \draw (delta) to[out=60, in=180] +(.5,.5) coordinate (out1);
          \draw (delta) to[out=-60, in=180] +(.5,-.5) coordinate (out2);
          \node[link, right=1 of delta] (mu) {};
          \draw (mu) to[out=120, in=0] +(-.5,.5) -- (out1);
          \draw (mu) to[out=-120, in=0] +(-.5,-.5) -- (out2);
          \draw (mu) -- +(.75,0) node[link] (delta2) {};
          \draw (delta2) to[out=60, in=180] +(.5,.5) -- +(.5,0);
          \draw (delta2) to[out=-60, in=180] +(.5,-.5) node[link] {};
        \end{tikzpicture}
      };
      \node (P3) [right=1 of P2] {
        \begin{tikzpicture}[WD]
          \node[link] (delta) {};
          \draw (delta) -- +(-.75,0);
          \draw (delta) to[out=60, in=180] +(.5,.5) -- +(.5,0);
          \draw (delta) to[out=-60, in=180] +(.5,-.5) node[link] {};
        \end{tikzpicture}
      };
      \node (P4) [right=1 of P3] {
        \begin{tikzpicture}[WD]
          \draw (0,0) -- (1.5,0);
        \end{tikzpicture}
      };
      \node at ($(P1.east)!.5!(P2.west)$) {$=$};
      \node at ($(P2.east)!.5!(P3.west)$) {$\leq$};
      \node at ($(P3.east)!.5!(P4.west)$) {$=$};
    \end{tikzpicture}
  \]
\end{remark}

\section{Supply}\label{sec.supply}

It often happens that every object in a symmetric monoidal category $\cat{C}$ is equipped with the same sort of algebraic structure---say coming from a prop $\pp$---with the property that these algebraic structures are compatible with the monoidal structure. In \cite{fong2019supplying}, the authors refer to this situation by saying that $\cat{C}$ \emph{supplies} $\pp$. For our purposes we need to slightly generalize this story, from props to po-props and from symmetric monoidal categories $\cat{C}$ to symmetric monoidal po-categories $\cc$.

\begin{definition}[Supply]\label{def.supply}
Let $\pp$ be a po-prop and $\cc$ a symmetric monoidal po-category. A \emph{supply of $\pp$ in $\cc$} consists of a strong monoidal po-functor $s_c\colon\pp\to\cc$ for each object $c\in\cc$, such that
\begin{enumerate}[label=(\roman*)]
	\item $s_c(m)=c\tpow{m}$ for each $m\in\nn$, 
	\item the strongator $c\tpow{m}\otimes c\tpow{n}\to c\tpow{(m+n)}$ is equal to the associator for each $m,n\in\nn$, and
	\item the following diagrams commute for every $c,d\in\cc$ and $\mu\colon m\to n$ in $\pp$:
\begin{equation}\label{eqn.supply_commute_tensors}
\begin{tikzcd}[column sep=55pt]
	c\tpow{m}\otimes d\tpow{m}\ar[r, "s_c(\mu)\otimes s_d(\mu)"]\ar[d, "\sigma"']&
	c\tpow{n}\otimes d\tpow{n}\ar[d, "\sigma"]\\
	(c\otimes d)\tpow{m}\ar[r, "s_{c\otimes d}(\mu)"']&
	(c\otimes d)\tpow{n}
\end{tikzcd}
\hspace{.7in}
\begin{tikzcd}
	I\ar[r, equal]\ar[d, "\sigma"']&
	I\ar[d, "\sigma"]\\
	I\tpow{m}\ar[r, "s_I(\mu)"']&
	I\tpow{n}
\end{tikzcd}
\end{equation}
where the $\sigma$'s are the symmetry isomorphisms from \cref{eqn.symmetry}. 
\end{enumerate}
We often denote the morphism $s_c(\mu)$ in $\cc$ simply by $\mu_c\colon c\tpow{m}\to c\tpow{n}$ for typographical reasons; i.e.\ we elide explicit mention of $s$.

We further say that $f\colon c\to d$ in $\cc$ is a \emph{lax $s$-homomorphism} (resp.\ \emph{oplax $s$-homomorphism}) if, for each $\mu\colon m\to n$ in the prop $\pp$, there is a 2-morphism as shown in the left-hand (resp.\ right-hand) diagram:
\begin{equation}\label{eqn.nat_means_homo}
\begin{tikzcd}
	c\tpow{m}\ar[r, "\mu_c"]\ar[d, "f\tpow{m}"']&
	c\tpow{n}\ar[d, "f\tpow{n}"]\\
	d\tpow{m}\ar[r, "\mu_d"']&
	d\tpow{n}\ar[ul, phantom, "\leq"]
\end{tikzcd}
\hspace{2cm}
\begin{tikzcd}
	c\tpow{m}\ar[r, "\mu_c"]\ar[d, "f\tpow{m}"']&
	c\tpow{n}\ar[d, "f\tpow{n}"]\\
	d\tpow{m}\ar[r, "\mu_d"']&
	d\tpow{n}\ar[ul, phantom, "\geq"]
\end{tikzcd}
\end{equation}
Since $\cc$ is locally posetal, if $f$ is both a lax and an oplax $s$-homomorphism, then these diagrams commute and we simply say $f$ is an \emph{$s$-homomorphism}.

We say that $\cc$ \emph{(lax-/oplax-) homomorphically supplies $\pp$} if every morphism $f$ in $\cc$ is a (lax/oplax) $s$-homomorphism.
\end{definition}

An important example of supply is that categories with finite products are those that homomorphically supply commutative comonoids. This is the main theorem of \cite{fox1976coalgebras}.

\begin{proposition}\label{prop.fox}
A category $\cat{C}$ has finite products iff it can be equipped with a homomorphic supply of commutative comonoids. If $\cat{C}$ and $\cat{D}$ have finite products, a functor $\cat{C}\to\cat{D}$ preserves them iff it preserves the supply of comonoids.
\end{proposition}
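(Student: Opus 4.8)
The plan is to recognize this as a restatement of Fox's theorem \cite{fox1976coalgebras} in the language of supplies, and to treat the two directions separately. For the forward direction, assume $\cat{C}$ has finite products and take the monoidal structure to be $(1,\times)$ as in \cref{rem.standard_abuse}. Each object $c$ then carries the comonoid whose counit $\epsilon_c\colon c\to 1$ is the unique map to the terminal object and whose comultiplication $\delta_c\colon c\to c\times c$ is the diagonal $\pair{\id_c,\id_c}$; the comonoid axioms are immediate from the universal property of products, and the supply compatibility conditions (i)--(iii) of \cref{def.supply} follow from the coherence of the product. Naturality of the universal maps shows that every $f\colon c\to d$ satisfies $f\cp\epsilon_d=\epsilon_c$ and $f\cp\delta_d=\delta_c\cp(f\times f)$, i.e.\ every morphism is a comonoid homomorphism; this produces a homomorphic supply.

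For the backward direction---the substance of the statement---assume $\cat{C}$ is symmetric monoidal and homomorphically supplies commutative comonoids, and reconstruct the finite products from the supply. First I would show the monoidal unit $I$ is terminal: for any $c$ the counit $\epsilon_c\colon c\to I$ is a map to $I$, and given any $f\colon c\to I$ the homomorphism condition gives $f\cp\epsilon_I=\epsilon_c$; since the right-hand square of \cref{eqn.supply_commute_tensors} forces $\epsilon_I=\id_I$, we get $f=\epsilon_c$, establishing uniqueness. Next I would exhibit $c\otimes d$ as the product of $c$ and $d$, with projections $\pi_1\coloneqq\id_c\otimes\epsilon_d$ and $\pi_2\coloneqq\epsilon_c\otimes\id_d$ (postcomposed with unitors) and with pairing $\pair{f,g}\coloneqq\delta_x\cp(f\otimes g)$ for $f\colon x\to c$ and $g\colon x\to d$. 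The equations $\pair{f,g}\cp\pi_1=f$ and $\pair{f,g}\cp\pi_2=g$ then follow from the counit law together with the homomorphism identities $g\cp\epsilon_d=\epsilon_x$ and $f\cp\epsilon_c=\epsilon_x$.

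The crux, and the step I expect to be the main obstacle, is uniqueness of the pairing, since this is exactly where the \emph{homomorphic} hypothesis is essential (a mere supply does not suffice). Given any $h\colon x\to c\otimes d$ with $h\cp\pi_1=f$ and $h\cp\pi_2=g$, I would use that the comonoid on $c\otimes d$ is the tensor of those on $c$ and $d$---this is precisely supply axiom (iii), \cref{eqn.supply_commute_tensors}---to verify the identity $\delta_{c\otimes d}\cp(\pi_1\otimes\pi_2)=\id_{c\otimes d}$. Since $h$ is itself a comonoid homomorphism, $h\cp\delta_{c\otimes d}=\delta_x\cp(h\otimes h)$, and then
\[
  h=h\cp\delta_{c\otimes d}\cp(\pi_1\otimes\pi_2)=\delta_x\cp\big((h\cp\pi_1)\otimes(h\cp\pi_2)\big)=\delta_x\cp(f\otimes g)=\pair{f,g}.
\]
Keeping track of the associators, unitors, and symmetry isomorphisms suppressed above (via the coherence theorem) is the only genuine bookkeeping burden.

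Finally, for the functorial statement, both directions are now formal. A functor preserving finite products sends the diagonal and terminal maps of $\cat{C}$ to those of $\cat{D}$, hence preserves the supply. Conversely, since the terminal object, projections, and pairings were defined purely from the counits and comultiplications, any strong monoidal functor preserving the supply preserves these constructions, and therefore preserves finite products.
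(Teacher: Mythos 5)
Your proof is correct, but it is worth noting that the paper does not actually prove this proposition at all: it simply attributes the statement to Fox's theorem and cites \cite{fox1976coalgebras}, treating it as known background. What you have written is, in effect, a self-contained proof of Fox's theorem translated into the supply language of \cref{def.supply}, and the argument is the standard one: terminality of $I$ from the counit-homomorphism condition together with $\epsilon_I=\id_I$; the product structure on $c\otimes d$ via $\pi_1=\id_c\otimes\epsilon_d$, $\pi_2=\epsilon_c\otimes\id_d$, and $\pair{f,g}=\delta_x\cp(f\otimes g)$; and uniqueness of the pairing via the identity $\delta_{c\otimes d}\cp(\pi_1\otimes\pi_2)=\id_{c\otimes d}$, which is exactly where supply axiom (iii) (\cref{eqn.supply_commute_tensors}) and the \emph{homomorphic} hypothesis on $h$ enter. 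You correctly isolate that last step as the crux. Two small remarks: first, the cocommutativity of the supplied comonoids is not needed for the universal-property argument in the backward direction, but it is forced in the forward direction (the diagonal is cocommutative) and is built into the prop $\finset\op$ that the supply is indexed by, so nothing is lost; second, in the functoriality clause one should note that a finite-product-preserving functor is automatically strong monoidal for the cartesian structures, so that ``preserves the supply'' in the sense of \cref{def.preserve_supply} is even well posed --- you gesture at this and it is easily filled in. The trade-off is clear: the paper buys brevity by outsourcing to the literature, while your version makes the result verifiable in place and foreshadows the analogous uniqueness arguments (e.g.\ \cref{prop.prerel_comon_unique}) used later.
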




\cref{cor.change_of_supply} below was essentially proven in \cite[Proposition~3.18]{fong2019supplying}; the change from props to po-props makes no difference in this context.

\begin{proposition}[Change of supply]\label{cor.change_of_supply}
Let $G\colon\pp\to\qq$ be a po-prop functor. For any supply $s$ of $\qq$ in $\cc$, we have a supply $(G\cp s)$ of $\pp$ in $\cc$.
\end{proposition}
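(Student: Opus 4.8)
The plan is to unwind \cref{def.supply} and verify that the three supply conditions for $(G\cp s)$ reduce to the corresponding conditions for $s$, exploiting that $G$ is identity-on-objects and strict monoidal. For each object $c\in\cc$ I would take the candidate supply functor to be the composite $(G\cp s)_c\coloneqq G\cp s_c\colon\pp\to\cc$. Since $G$ is a po-prop functor and $s_c$ is a strong monoidal po-functor, their composite is again a strong monoidal po-functor: monotone functors compose to a monotone functor, and strong monoidal structures compose, so the basic typing requirement of \cref{def.supply} is met. (This monotonicity bookkeeping is the only ingredient not already present in the prop-level argument of the cited source, and it is entirely routine.)

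Condition (i) is then immediate: because $G$ is the identity on objects, $(G\cp s)_c(m)=s_c(G(m))=s_c(m)=c\tpow{m}$. For condition (iii), observe that for a morphism $\mu\colon m\to n$ in $\pp$ one has $(G\cp s)_c(\mu)=s_c(G\mu)$, where $G\mu\colon m\to n$ is a morphism of $\qq$. The two squares of \eqref{eqn.supply_commute_tensors} that I must check for $(G\cp s)$ and $\mu$ are then literally the squares that condition (iii) for the supply $s$ asserts to commute, instantiated at the $\qq$-morphism $G\mu$; hence they commute.

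The one place that warrants care is condition (ii), which governs the strongators. The strongator of the composite $G\cp s_c$ is the strongator of $s_c$ postcomposed with $s_c$ applied to the strongator of $G$. Here I would invoke that a po-prop functor, being identity-on-objects between strict monoidal po-props, is strict monoidal, so the strongators of $G$ are identities and the strongator of $G\cp s_c$ coincides with that of $s_c$; the latter equals the associator by condition (ii) for $s$. I expect this to be the only genuinely non-formal step---everything else is direct substitution using that $G$ is identity-on-objects---and it is precisely the point where the argument would fail if $G$ were permitted arbitrary non-identity strongators. With (i), (ii), and (iii) verified, $(G\cp s)$ is a supply of $\pp$ in $\cc$.
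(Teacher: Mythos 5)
Your proof is correct. Note that the paper itself offers no argument for this proposition: it simply remarks that the statement ``was essentially proven in \cite[Proposition~3.18]{fong2019supplying}'' and that passing from props to po-props changes nothing. Your verification is exactly the routine unwinding that citation stands in for, and you correctly isolate the one point of substance, namely that the strongators of $G$ must be identities for condition (ii) to transfer. One small caution there: being identity-on-objects does not by itself force a strong monoidal functor to be strict (its strongators would merely be automorphisms of each object $m+n$), so strictness of $G$ should be read off from the convention in \cref{def.props} that a po-prop functor is a strict (identity-on-objects) monoidal functor, rather than deduced from identity-on-objects-ness; with that reading, conditions (i) and (iii) are, as you say, literal re-instantiations of the corresponding conditions for $s$ at $G\mu$, and the proof is complete.
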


\begin{definition}[Preservation of supply]\label{def.preserve_supply}
Let $\pp$ be a po-prop, $\cc$ and $\dd$ symmetric monoidal po-categories, and suppose $s$ is a supply of $\pp$ in $\cc$ and $t$ is a supply of $\pp$ in $\dd$. We say that a strong symmetric monoidal po-functor $(F,\varphi)\colon\cc\to\dd$ \emph{preserves the supply} if the strongators $\varphi$ provide an isomorphism $t_{Fc}\cong (s_c\cp F)$ of po-functors $\pp\to\dd$ for each $c\in\cc$.
\end{definition}

Unpacking, a strong monoidal po-functor $(F,\varphi)$ preserves the supply iff the following diagram commutes for each morphism $\mu\colon m\to n$ in $\pp$ and object $c\in\cc$:
\begin{equation}\label{eqn.unpack_preserve_supply}
	\begin{tikzcd}[column sep=large]
  	F(c)\tpow{m}\ar[r, "\mu^{\;}_{F(c)}"]\ar[d, "\varphi"', "\cong"]&
  	F(c)\tpow{n}\ar[d, "\varphi", "\cong"']\\
  	F(c\tpow{m})\ar[r, "F(\mu_c)"']&
  	F(c\tpow{n})
  \end{tikzcd}
\end{equation}


\section{Definition of relational categories}\label{sec.def_reg2cat}

The goal of this section is to axiomatize po-categories that arise as the relations of a regular categories; we call them  \emph{relational po-categories} and define them in \cref{def.reg2cat}. Most of the structure is already present in the following definition.

\begin{definition}[Prerelational po-category] \label{def.prerel_pocat}
  A symmetric monoidal po-category $\cc$ is \emph{prerelational} if it supplies wiring $\ww$, such that the supply of comonoids is lax homomorphic.
\end{definition}

\begin{remark}
  The lax comonoid homomorphism condition requires that for every $f\colon c\to d$ in $\cc$, one has 
  $f\cp\delta_d\leq\delta_c\cp (f\otimes f)$ and $
  f\cp\epsilon_d\leq\epsilon_c$. In graphical notation:
\begin{equation}\label{eqn.lax_hom_prerel}
  \begin{aligned}
	\begin{tikzpicture}[unoriented WD, font=\small]
	  \node (P1) {
	  \begin{tikzpicture}[inner WD]
	    \node[oshellr] (f) {$f$};
	    \node[link, right=.5 of f] (dot) {};
	    \draw (f) -- +(-10pt, 0);
      \draw (f) to (dot);
   	\end{tikzpicture}	
	  };
	  \node[right=2 of P1] (P2) {
  	\begin{tikzpicture}[inner WD]
      \node[link] (dot) {};
  	  \draw (dot) to +(-10pt, 0);
  	\end{tikzpicture}
	  };
	  \node at ($(P1.east)!.5!(P2.west)$) {$\leq$};
  	\node[right=7 of P2] (P3) {
  	 \begin{tikzpicture}[inner WD]
    	\coordinate (f1);
  		\coordinate[below=2 of f1] (f2);
  		\node[link] at ($(f1.west)!.5!(f2.west)+(-1,0)$) (dot) {};
    	\node[oshellr, left=.5 of dot] (f0) {$f$};
      \draw (f0.west) to +(-1,0);
      \draw (f0) to (dot);
  		\draw (f1.west) to[out=180, in=60] (dot);
  		\draw (f2.west) to[out=180, in=-60] (dot);
  		\draw (f1.east) -- +(1,0);
  		\draw (f2.east) -- +(1,0);
    \end{tikzpicture}
    };
   	\node[right=1 of P3] (P4) {
    \begin{tikzpicture}[inner WD]
    	\node[oshellr] (f1) {$f$};
  		\node[oshellr, below=.5 of f1] (f2) {$f$};
  		\draw (f1.east) -- +(1,0);
  		\draw (f2.east) -- +(1,0);
  		\node[link] at ($(f1.west)!.5!(f2.west)+(-1,0)$) (dot) {};
  		\draw (f1.west) to[out=180, in=60] (dot);
  		\draw (f2.west) to[out=180, in=-60] (dot);
  		\draw (dot) to +(-1,0);
    \end{tikzpicture}
     };
	  \node at ($(P2.east)!.5!(P3.west)$) {and};
	  \node at ($(P3.east)!.5!(P4.west)$) {$\leq$};
\end{tikzpicture}
\end{aligned}
\end{equation}
\end{remark}

\begin{lemma} \label{lemma.lax_comons_oplax_mons}
  Suppose $\cc$ supplies wiring $\ww$. Then the supply of comonoids is lax homomorphic iff the supply of monoids is oplax homomorphic.
\end{lemma}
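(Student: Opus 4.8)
The plan is to prove this as an instance of the mate correspondence across the adjunctions built into the supply. The comonoid operations $\delta,\epsilon$ are, object by object, the left adjoints of the monoid operations $\mu,\eta$, so an inequality recording lax compatibility of a morphism $f$ with the left adjoints ought to transpose to one recording oplax compatibility with the right adjoints, and conversely. First I would set up the adjunctions we may use inside $\cc$. By \cref{prop.left_adj_in_ww} the morphism $\epsilon$ is left adjoint to $\eta$ and $\delta$ is left adjoint to $\mu$ in $\ww$. Each supply functor $s_c\colon\ww\to\cc$ is in particular a po-functor, hence a 2-functor, so it preserves these adjunctions by \cref{lemma.funs_ladjs}. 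Thus for every object $c$ we obtain adjunctions $\epsilon_c\dashv\eta_c$ and $\delta_c\dashv\mu_c$ in $\cc$, with unit and counit inequalities
\[
\id_c\leq\delta_c\cp\mu_c,\quad \mu_c\cp\delta_c\leq\id_{c\otimes c},\quad \id_c\leq\epsilon_c\cp\eta_c,\quad \eta_c\cp\epsilon_c\leq\id_I.
\]

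Next I would reduce each homomorphism condition to its generators. Being a lax (resp.\ oplax) $s$-homomorphism for all of $\ww$ follows from being one for a generating set of operations, since the defining inequalities of \cref{def.supply} are closed under composition and monoidal product of operations and hold trivially for identities; this is precisely what allows the remark after \cref{def.prerel_pocat} to phrase lax comonoid homomorphy using only the two inequalities \eqref{eqn.lax_hom_prerel} for $\delta$ and $\epsilon$. Unwinding \cref{def.supply} the same way for the monoid, oplax homomorphy amounts to $\mu_c\cp f\leq(f\otimes f)\cp\mu_d$ and $\eta_c\cp f\leq\eta_d$. Hence it suffices, for each fixed $f\colon c\to d$, to show that the comonoid pair $f\cp\delta_d\leq\delta_c\cp(f\otimes f)$ and $f\cp\epsilon_d\leq\epsilon_c$ is equivalent to the monoid pair $\mu_c\cp f\leq(f\otimes f)\cp\mu_d$ and $\eta_c\cp f\leq\eta_d$; quantifying over all $f$ then yields the lemma.

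The heart is the transposition, which is a short three-term inequality chain. For the multiplicative part, assuming the comonoid inequality I would compute
\[
\mu_c\cp f\;\leq\;\mu_c\cp f\cp\delta_d\cp\mu_d\;\leq\;\mu_c\cp\delta_c\cp(f\otimes f)\cp\mu_d\;\leq\;(f\otimes f)\cp\mu_d,
\]
where the first step inserts the unit $\id_d\leq\delta_d\cp\mu_d$, the second applies the hypothesis $f\cp\delta_d\leq\delta_c\cp(f\otimes f)$, and the third applies the counit $\mu_c\cp\delta_c\leq\id_{c\otimes c}$. The converse runs the same three ingredients in the opposite order: starting from $f\cp\delta_d$, insert $\id_c\leq\delta_c\cp\mu_c$, apply the oplax hypothesis $\mu_c\cp f\leq(f\otimes f)\cp\mu_d$, and finish with the counit $\mu_d\cp\delta_d\leq\id_{d\otimes d}$. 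The additive part relating $\epsilon$ and $\eta$ is word-for-word the same computation with the adjunction $\epsilon_c\dashv\eta_c$ in place of $\delta_c\dashv\mu_c$, giving $\eta_c\cp f\leq\eta_c\cp f\cp\epsilon_d\cp\eta_d\leq\eta_c\cp\epsilon_c\cp\eta_d\leq\eta_d$ and its reverse.

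I expect no genuine obstacle here: the whole content is the mate calculus, and the poset-enrichment makes it especially painless, since each hom-poset contains at most one $2$-cell between parallel $1$-morphisms and so the triangle identities never need to be invoked---only the unit and counit inequalities matter. The single point that requires care is the bookkeeping of composition order and of which morphism is the left versus the right adjoint; getting this straight is exactly what makes the three-step chains terminate at the intended inequality. I would finally remark that each chain is precisely the diagrammatic manipulation of bending a cup or cap past $f$, so the argument may equally be rendered in the string-diagram calculus of \eqref{eqn.equations_wires}.
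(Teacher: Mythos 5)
Your proposal is correct and is in essence the paper's own proof: the paper also reduces the statement to the mate correspondence across the adjunctions $\epsilon_c\dashv\eta_c$ and $\delta_c\dashv\mu_c$ obtained from \cref{prop.left_adj_in_ww}, stating the transposition rules ($a\le l\cp b$ iff $r\cp a\le b$, etc.) abstractly where you derive them by the explicit unit/counit insertion chains. Your added care about reducing the homomorphism condition to the generators $\delta,\epsilon$ (resp.\ $\mu,\eta$) is a point the paper leaves implicit but is handled correctly here.
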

\begin{proof}
  Note that given $l$ left adjoint to $r$, we always have that $a \le (l \cp b)$ iff $(r \cp a) \le b$ and that $(a \cp l)\le b$ iff $a \le (b \cp r)$. The result now follows from \cref{prop.left_adj_in_ww}.
\end{proof}

It follows from this that every hom-poset $\cc(c,d)$ in a prerelational po-category is a meet-semilattice. Though this is an interesting fact, we will not need to use it; we thus save the proof until \cref{prop.meetsl}.

\begin{notation}\label{rem.frob_means_freedom}
Recall that we refer to the generating morphisms of $\ww$ as $(\eta,\mu,\epsilon,\delta)$; see \cref{eqn.generating_wires}. If $\cc$ supplies wiring $\ww$ then the equations in \cref{eqn.equations_wires} imply that $(\eta,\mu,\epsilon,\delta)$ form a \emph{special commutative frobenius monoid} structure on every object, and these structures are compatible with $\cc$'s monoidal structure. This is exactly the definition of $\cc$, or more precisely its underlying 1-category, being a \emph{hypergraph category} (see \cite{fong2019hypergraph}).
  
In particular, one obtains a self-dual compact closed structure on $\cc$ using the cup and cap; see \cref{eqn.cap_cup}. Thus to every 1-morphism $f\colon c \to d$ we may associate its \emph{transpose} $f\tp\colon d \to c$. In string diagrams, we depict the transpose of
  $
  \begin{tikzpicture}[inner WD, baseline=(f.-20)]
  	\node[oshellr] (f) {$f$};
		\draw (f.west) -- +(-.5,0);
		\draw (f.east) -- +(.5,0);
	\end{tikzpicture}
	$
	by its 180-degree rotation,
\[
  \begin{tikzpicture}[inner WD, baseline=(f.-20)]
  	\node[oshelll] (f) {$f$};
		\draw (f.west) -- +(-.5,0);
		\draw (f.east) -- +(.5,0);
	\end{tikzpicture}
	\quad\coloneqq\quad
  \begin{tikzpicture}[inner WD, baseline=(f.-20)]
  	\node[oshellr] (f) {$f$};
		\draw (f.west) -- 
			++(-.5, 0) to[out=180, in=180]
			++(0, 1.25) --
			++(4.5, 0)
			;
		\draw (f.east) -- 
			++(.5,0) to[out=0, in=0]
			++(0, -1.25) --
			++(-4.5, 0)
			;
	\end{tikzpicture}
\]
  
	The additional structures and axioms of a hypergraph category allow us to split wires in various ways, but such that ``connectivity is all that matters'' when interpreting string diagrams. Thus we may use network or circuit-like diagrams as in \cite{fong2019hypergraph} to represent 1-morphisms in $\cc$. This allows us to be rather informal when depicting morphisms as diagrams---in particular how a given connection is constructed from frobenius maps---because all formalizations of it result in the same morphism. For example, the following diagrams all represent the same morphism:
  \[
    \begin{tikzpicture}[inner WD]
    	\coordinate (f1);
  		\coordinate[below=2 of f1] (f2);
  		\node[link] at ($(f1)!.5!(f2)+(-1,0)$) (dot) {};
  		\coordinate[right=1 of f2] (g1);
  		\coordinate[below=2 of g1] (g2);
  		\node[link] at ($(g1)!.5!(g2)+(1,0)$) (dot2) {};
    	\node[funcr, left=.5 of dot] (f0) {$f$};
    	\node[oshellr, right=.5 of dot2,inner sep=1pt] (g0) {$g$};
      \draw (f0.west) to +(-1,0);
      \draw (f0) to (dot);
  		\draw (f1.west) to[out=180, in=60] (dot);
  		\draw (f2.west) to[out=180, in=-60] (dot);
      \draw (f1) -- (f1-|g0) -- +(2,0);
  		\draw (f2) -- (g1);
  		\draw (g1) to[out=0, in=120] (dot2);
  		\draw (g2) to[out=0, in=-120] (dot2);
      \draw (g0) to (dot2);
      \draw (g2) to (g2-|f0.west) -- +(-1,0);
    \end{tikzpicture}
    \hspace{1.5cm}
    \begin{tikzpicture}[inner WD]
  		\node[link] (dot) {};
    	\node[funcd, above=.5 of dot] (f0) {$f$};
    	\node[oshelld, below=.5 of dot,inner sep=1pt] (g0) {$g$};
      \draw (f0) -- (dot);
      \draw (g0) -- (dot);
      \draw (dot) -- +(-3,0);
      \draw (dot) -- +(3,0);
  		\draw (f0) to[out=90, in=0] +(-1,1.5) -- +(-2,0);
    \end{tikzpicture}
    \hspace{1.5cm}
    \begin{tikzpicture}[inner WD]
  		\coordinate (g1);
  		\coordinate[below=2 of g1] (g2);
  		\node[link] at ($(g1)!.5!(g2)+(1,0)$) (dot) {};
    	\node[funcl, right=.5 of dot] (f0) {$f$};
    	\node[oshelll, left=.5 of g1,inner sep=1pt] (g0) {$g$};
      \draw (f0) -- (dot);
      \draw (g0) -- (g1);
  		\draw (g1) to[out=0, in=120] (dot);
  		\draw (g2) to[out=0, in=-120] (dot);
      \draw (g2) -- +(-3,0);
      \draw (dot) to[out=-45, in=180] +(1,-1) -- +(2.5,0);
      \draw (f0.east) to[out=0, in=0] +(0,2) -- ($(dot-|g1)+(0,2)$);
      \draw ($(dot-|g1)+(0,2)$) -- +(-3,0);
    \end{tikzpicture}
    \hspace{1.5cm}
    \begin{tikzpicture}[inner WD]
  		\coordinate (g1);
  		\coordinate[below=2 of g1] (g2);
  		\node[link] at ($(g1)!.5!(g2)+(1,0)$) (dot) {};
    	\node[funcr, left=.5 of g1] (f0) {$f$};
    	\node[oshellu, above=1 of dot,inner sep=1pt] (g0) {$g$};
      \draw (f0.west) -- +(-1,0);
      \draw (f0.east) -- (g1);
  		\draw (g1) to[out=0, in=120] (dot);
  		\draw (g2) to[out=0, in=-120] (dot);
      \draw (dot) -- +(2,0);
      \draw (g0) -- (dot);
      \draw (g2) -- (g2-|f0.west) -- +(-1,0);
    \end{tikzpicture}
  \]
  
  We refer the reader unfamiliar with this notation to \cite{fong2019hypergraph} for more details.
\end{notation}

To make a prerelational category relational, we need tabulations. The following definition is due to Freyd and Scedrov \cite{freyd1990categories}.

\begin{definition}[Tabulation] \label{def.tabulation}
Suppose $\cc$ supplies $\ww$ and let $f\colon r \tickar s$ be a morphism in $\cc$. A \emph{tabulation $(f_R,f_L)$ of $f$} is a factorization $r\Tickar{f_R}|f|\Tickar{f_L}s$ of $f$ where
	\begin{enumerate}[label=(\roman*)]
		\item $f_R\colon r\tickar |f|$ is a right adjoint;
		\item $f_L\colon |f|\tickar s$ is a left adjoint; and
		\item $\hat{f}\cp\hat{f}\tp=\id_{|f|}$, where $\hat{f}\coloneqq\delta_{|f|}\cp(f_L\otimes f_R\tp)$; in pictures
  \begin{equation}\label{eqn.tabulation}
  \begin{tikzpicture}[baseline=(P1)]
  	\node (P1) {
    \begin{tikzpicture}[inner WD, baseline=(dot1)]
    	\node[oshellr] (fl) {$f_L$};
  		\node[oshelll, right=of fl] (fl*) {$f_L$};
  		\node[oshellr, below=of fl] (fr!) {$f_R$};
  		\node[oshelll] at (fl*|-fr!) (fr) {$f_R$};
  		\draw (fl.east) -- (fl*.west);
  		\draw (fr!.east) -- (fr.west);
  		\node[link] at ($(fl.west)!.5!(fr!.west)+(-1,0)$) (dot1) {};
  		\node[link] at ($(fl*.east)!.5!(fr.east)+(1,0)$) (dot2) {};
  		\draw (fl.west) to[out=180, in=60] (dot1);
  		\draw (fr!.west) to[out=180, in=-60] (dot1);
  		\draw (dot1) to node[above, font=\scriptsize] {$|f|$} +(-2,0);
  		\draw (fl*.east) to[out=0, in=120] (dot2);
  		\draw (fr.east) to[out=0, in=-120] (dot2);
  		\draw (dot2) to node[above, font=\scriptsize] {$|f|$} +(2,0);
    \end{tikzpicture}
    };
    \node at ($(P1.east)+(.5,0)$) {$=$};
    \draw ($(P1.east)+(1,0)$) to node[above, font=\scriptsize] {$|f|$} +(1,0);
\end{tikzpicture}
\end{equation}
\end{enumerate}
We call $\hat{f}\colon|f|\to r\otimes s$ the \emph{span associated to $f$}.
\end{definition}

We will see in \cref{prop.rrel_tabulations} that a tabulation in $\rrel(\cat{R})$ is just a jointly monic span in the regular category $\cat{R}$. This holds in full generality; see \cref{prop.condition3}.

\begin{definition}[Relational po-category]\label{def.reg2cat}
A \emph{relational po-category} is a prerelational po-category $\rr$ in which every morphism has a tabulation.

We denote by $\rrlcat$ the po-category whose objects are relational po-categories $\rr$, whose 1-morphisms are strong monoidal po-functors $F\colon\rr\to\rr'$, and whose 2-morphisms are left adjoint natural transformations $\alpha\colon F\tto G$.
\end{definition}

\begin{remark}
We will eventually construct a 2-functor $\ladj\colon\rrlcat\to\rrgcat$ that sends every $\rr$ to its category of left adjoints. For $\ladj$ to be 2-functorial, the 2-morphisms in $\rrlcat$ need to be left adjoint transformations by \cref{prop.ladj_smc}.
\end{remark}

%

\begin{remark}
  A prerelational po-category is exactly what Carboni and Walters called a \emph{`bicategory of relations'} (quotation marks were an explicit part of the their terminology), and a relational po-category is exactly what they called a \emph{functionally complete `bicategory of relations'}. There are a few, ultimately immaterial differences in the definition; we will examine these in \cref{chap.carboni_walters}.
\end{remark}

\begin{example}\label{ex.ww_not_relational}
  Note that the po-category $\ww$ is prerelational, but not relational: the cospan $0\to 1\from 0$ in $\ww$ does not have a tabulation.
\end{example}

\begin{example}
  As we shall see in the next section, for any regular category $\cat{R}$, the po-category $\rrel(\cat{R})$ is relational.
\end{example}

\chapter{The relations 2-functor $\mathbb{R}\Cat{el}\colon\mathcal{R}\Cat{gCat}\to\mathcal{R}\Cat{lPoCat}$}\label{chap.relations_functor}

We have said that for any regular category $\cat{R}$, the po-category $\rrel(\cat{R})$ of relations is relational in the sense of \cref{def.reg2cat}. In this section we prove that fact, as well as the functoriality of the $\rrel$ construction.

\section{$\mathbb{R}\Cat{el}(\cat{R})$ is relational}

Given a category $\cat{R}$ with finite limits, write $\sspan(\cat{R})$ for the usual category of spans in $\cat{R}$. It has a symmetric monoidal structure given by the categorical product in $\cat{R}$. There is an identity-on-objects symmetric monoidal functors $\inc_0\colon\cat{R} \to \sspan(\cat{R})$ sending $f \colon r \to s$ to the span $r \From{\id_r} r \To{f} s$, or equivalently to its graph $\pair{\id_r,f}\colon r\to r\times s$.

If $\cat{R}$ is regular, there is a functor $i\colon \sspan(\cat{R}) \to \rrel(\cat{R})$ sending a span $r \From{f} x \To{g} s$ to the image factorization of $\pair{f,g}$. We denote the composite (identity-on-objects, monoidal) functor by $\inc\coloneqq (\inc_0\cp i)\colon\cat{R}\to\rrel(\cat{R})$; it again sends  $f\mapsto\pair{\id_r,f}$.

\begin{proposition}\label{prop.rr_supplied}
Let $\cat{R}$ be a regular category. The supply of comonoids in $\cat{R}$ extends to a supply of wirings in $\rrel(\cat{R})$.
\end{proposition}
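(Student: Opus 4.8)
The plan is to build the supply generator-by-generator, transporting the comonoid structure of $\cat R$ across the inclusion $\inc\colon\cat R\to\rrel(\cat R)$ and adjoining the monoid maps as transposes. By \cref{prop.fox}, the finite products of $\cat R$ equip each object $r$ with a commutative comonoid $(\delta_r,\epsilon_r)$, where $\delta_r\colon r\to r\times r$ is the diagonal and $\epsilon_r\colon r\to 1$ the unique map to the terminal object. Since $\inc$ is an identity-on-objects symmetric monoidal functor sending every map of $\cat R$ to a left adjoint in $\rrel(\cat R)$, whose right adjoint is the opposite relation (i.e.\ the transpose), I would define the images of the four generators of $\ww$ from \cref{eqn.generating_wires} by $\delta\mapsto\inc(\delta_r)$, $\epsilon\mapsto\inc(\epsilon_r)$, $\mu\mapsto\inc(\delta_r)\tp$, and $\eta\mapsto\inc(\epsilon_r)\tp$.

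To see that these assignments extend to a strong monoidal po-functor $s_r\colon\ww\to\rrel(\cat R)$ with $s_r(m)=r\tpow m$, it suffices to check that the images satisfy the defining equations and inequalities of $\ww$ collected in \cref{eqn.equations_wires}. The comonoid laws (commutativity, counitality, coassociativity) hold because $\inc$ is monoidal and $(\delta_r,\epsilon_r)$ is a comonoid in $\cat R$; the monoid laws follow by applying the transpose $(-)\tp$, an identity-on-objects contravariant monoidal involution on $\rrel(\cat R)$, to the comonoid laws. The adjunction inequalities are exactly the units and counits witnessing $\inc(\delta_r)\dashv\inc(\delta_r)\tp$ and $\inc(\epsilon_r)\dashv\inc(\epsilon_r)\tp$, so they hold because $\inc$ lands in left adjoints (compare \cref{prop.left_adj_in_ww}); alternatively they can be read off by directly composing the relevant opposite relations.

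The substantive content is the special law and the Frobenius law, and this is where regularity enters. For the special law $\inc(\delta_r)\cp\inc(\delta_r)\tp=\id_r$, I would compute the composite in the relation calculus: pulling $\delta_r$ back against itself returns $\delta_r$ (the diagonal is monic), and taking the image of the resulting span back into $r\times r$ yields the identity relation. For the Frobenius law, the key observation is that the relevant square of diagonals and projections in $\cat R$ is a pullback; composing the corresponding left- and right-adjoint relations and taking images then gives, on both sides, the common value $\inc(\delta_r)\tp\cp\inc(\delta_r)$, namely the ``both-equal'' relation on $r\times r$. Pullback-stability of images (the third regular axiom) is what guarantees these composites are well-defined and are computed correctly via the span/relation recipe, so I expect this pullback/Beck--Chevalley verification to be the main obstacle; everything else is comonoid, monoid, or adjunction bookkeeping. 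One may also phrase this step as the assertion that $(\eta,\mu,\epsilon,\delta)$ is a special commutative Frobenius monoid on each object, as in \cref{rem.frob_means_freedom}.

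Finally I would verify that the family $\{s_r\}$ is a supply in the sense of \cref{def.supply}: condition (i) holds by construction, since $\inc$ and $(-)\tp$ are identity-on-objects; condition (ii) holds because the strongators of $\rrel(\cat R)$ are inherited from products in $\cat R$, so the comparison $r\tpow m\otimes r\tpow n\to r\tpow{m+n}$ is the associator; and the compatibility squares \cref{eqn.supply_commute_tensors} for $s_r$, $s_{r\otimes s}$, and $s_I$ reduce, through $\inc$ and the transpose, to the corresponding compatibility of the comonoid supply already present in $\cat R$, where $\delta_{r\times s}$ agrees with $\delta_r\times\delta_s$ up to the symmetry $\sigma$. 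This assembles the four generator assignments into the desired supply of wirings in $\rrel(\cat R)$.
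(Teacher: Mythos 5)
Your proposal is essentially correct but takes a genuinely different route from the paper. The paper's proof is conceptual and avoids touching the generators at all: the comonoid on $r$ is the unique finite-product-preserving functor $\finset\op\to\cat{R}$ sending $1\mapsto r$; since such a functor preserves all finite limits, the span construction turns it into a symmetric monoidal functor $\ccospan\co=\sspan(\finset\op)\to\sspan(\cat{R})$, and postcomposing with $i\colon\sspan(\cat{R})\to\rrel(\cat{R})$ gives $s_r\colon\ww\to\rrel(\cat{R})$ in one stroke, with the supply axioms inherited from the comonoid supply because every morphism of $\ccospan\co$ is an adjoint of one in $\finset\op$. Your generator-by-generator construction instead defines $s_r$ on $\epsilon,\delta,\eta,\mu$ and verifies the equations and inequalities of \cref{eqn.equations_wires} by explicit pullback/image computations in $\rrel(\cat{R})$. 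What your approach buys is concreteness: one sees exactly where regularity enters (the special and Frobenius laws, via pullbacks of the diagonal). What it costs is a hidden dependency: defining a po-functor \emph{out of} $\ww$ by its action on generators subject to the listed relations is legitimate only once one knows that \cref{eqn.generating_wires,eqn.equations_wires} actually \emph{present} $\ww$, i.e.\ that no further relations hold. That is not asserted where the generators are introduced; it is \cref{prop.lafms_are_relfinsetop}, proved only in the final chapter. Your step ``it suffices to check the images satisfy the defining equations and inequalities'' therefore needs an explicit appeal to that presentation result (which is not circular, as it does not depend on the present proposition), or else you must argue directly that your assignment respects all of $\ww$'s hom-posets as described in \cref{prop.ww_explicit}. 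With that citation added, the rest of your argument---the adjunction inequalities from $\inc(\delta_r)\dashv\inc(\delta_r)\tp$, the special law from monicity of the diagonal, the Frobenius law from the relevant pullback square, and the compatibility squares \cref{eqn.supply_commute_tensors} reducing to the comonoid supply in $\cat{R}$---goes through.
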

\begin{proof}
Given an object $r \in \cat{R}$, the comonoid supplied to $r$ is given by the unique finite product preserving functor $\finset\op \to \cat{R}$. But every finite product preserving functor out of $\finset\op$ preserves all finite limits \cite[Lemma VIII.4.1]{MacLane.Moerdijk:1992a}, and thus this functor extends to a symmetric monoidal functor $\ccospan\co =\sspan(\finset\op) \to \sspan(\cat{R})$. Composing it with the functor $i\colon \sspan(\cat{R}) \to \rrel(\cat{R})$ induces a po-functor $s_r\colon \ww \to \rrel(\cat{R})$. The fact that these functors $s_r$ form a supply of $\ww$ in $\rrel(\cat{R})$ (i.e.\ the diagrams in \eqref{eqn.supply_commute_tensors} commute) follows from the fact that we started with a supply of comonoids ($\finset\op$), and every morphism in $\ccospan\co$ is an adjoint of a morphism in $\finset\op$; see \cref{prop.ladj_smc}.
\end{proof}

Thus $\rrel(\cat{R})$ is a prerelational po-category. To show that it is a relational po-category, it remains to prove that it has tabulations. 

\begin{lemma}\label{lemma.rr_lax_supply_comonoid}
Every morphism $f\colon r\tickar s$ in $\rrel(\cat{R})$ is a lax comonoid homomorphism.
\end{lemma}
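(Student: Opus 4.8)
The plan is to unwind both required inequalities directly in terms of jointly-monic spans and subobjects in $\cat{R}$. Represent $f\colon r\tickar s$ by a jointly-monic span $r\Fromm{p}R\Too{q}s$, so that $\pair{p,q}\colon R\to r\times s$ is monic. By \cref{prop.rr_supplied} the supplied comonoid maps $\delta_r$ and $\epsilon_r$ are the images under $\inc$ of the diagonal $\Delta_r\colon r\to r\times r$ and the terminal map $!_r\colon r\to 1$; as spans these are $r\Fromm{\id}r\Too{\Delta_r}r\times r$ and $r\Fromm{\id}r\Too{!_r}1$. With these descriptions in hand, the two inequalities of \cref{eqn.lax_hom_prerel}, namely $f\cp\delta_s\le\delta_r\cp(f\otimes f)$ and $f\cp\epsilon_s\le\epsilon_r$, become concrete claims about subobjects of $r\times s\times s$ and of $r\times 1\cong r$ respectively, which I will verify by exhibiting the required morphisms of spans.

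For the counit inequality the computation is immediate: composing the span for $f$ with the span for $\epsilon_s$ amounts to pulling $q$ back along $\id_s$ and post-composing with $!_s$, so the composite relation is represented by the left leg $p$, i.e.\ its image $\im(p)\hookrightarrow r$. Since $\epsilon_r$ is represented by the top subobject $\id_r$ (it is the maximum element of the hom-poset $\rrel(\cat{R})(r,1)\cong\mathrm{Sub}(r)$), the inequality $\im(p)\le r$ holds trivially. Thus all the content sits in the comultiplication inequality.

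For the comultiplication inequality I will compute both composites as composites of spans. On the left, the pullback of $q$ along $\id_s$ is $R$ itself, so $f\cp\delta_s$ is represented by $\pair{p,q,q}\colon R\to r\times s\times s$; this map is already monic because $\pair{p,q}$ is, so no image factorization is needed. On the right, forming $f\otimes f$ and precomposing with $\delta_r$ requires the pullback of $\Delta_r$ along $p\times p$, which is the fibre product $P\coloneqq R\times_r R=\{(z_1,z_2)\mid p z_1=p z_2\}$; the relation $\delta_r\cp(f\otimes f)$ is then the image of the map $e\colon P\to r\times s\times s$ sending $(z_1,z_2)\mapsto(p z_1,q z_1,q z_2)$. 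The comparison is supplied by the diagonal $\Delta_R\colon R\to P$, $z\mapsto(z,z)$, which satisfies $\Delta_R\cp e=\pair{p,q,q}$. Since $e$ factors through its image $m$, so does $\pair{p,q,q}$, and this factorization of the monic subobject $f\cp\delta_s$ through the subobject representing $\delta_r\cp(f\otimes f)$ is exactly the witness that $f\cp\delta_s\le\delta_r\cp(f\otimes f)$.

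The main obstacle, such as it is, lies in carrying out the two span compositions honestly: one must track which leg the pullback is taken over, confirm that the left-hand composite needs no image (its comparison map is already monic) while the right-hand composite genuinely passes through an image, and hence observe that the inequality is witnessed by a factorization through that image rather than by an isomorphism of spans. Everything else reduces to the universal property of the image factorization. As an alternative I could argue structurally: write $f=\inc(p)\tp\cp\inc(q)$ with $\inc(p),\inc(q)$ the left adjoints coming from the span legs, use \cref{prop.fox} to see these maps are strict comonoid homomorphisms, reduce the claim to lax-homomorphy of the right adjoint $\inc(p)\tp$, and then deduce that from the counit inequality $\inc(p)\tp\cp\inc(p)\le\id$ together with the adjunction manipulations used in the proof of \cref{lemma.lax_comons_oplax_mons}.
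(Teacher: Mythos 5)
Your proof is correct and follows exactly the route the paper intends: the paper's own proof merely says that each inequality ``amounts to taking two pullbacks and comparing'' and leaves the calculation to the reader, and your span computations (the composite $\pair{p,q,q}$ being already monic on one side, and the factorization through the image of $R\times_r R\to r\times s\times s$ via the diagonal on the other) are precisely that comparison carried out in full. The alternative structural argument you sketch at the end is also sound, though it is not the one the paper gestures at.
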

\begin{proof}
We need to establish the inequalities $f\cp\epsilon_s\leq\epsilon_r$ and $f\cp\delta_s\leq\delta_r\cp(f\otimes f)$ (see \eqref{eqn.lax_hom_prerel}).
Taking $f$ to be a jointly monic span $r\from x\to s$, each inequality amounts to taking two pullbacks and comparing. We leave this calculation to the reader.
\end{proof}

\begin{lemma}\label{rem.rels_adjoint_composites}
  Let $f\colon r \to s$ in $\cat{R}$. In $\rrel(\cat{R})$, the graph $\pair{\id_r,f}$ of $f$ is left adjoint to the co-graph $\pair{f,\id_r}$ of $f$.
\end{lemma}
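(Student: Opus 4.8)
The plan is to verify the adjunction directly from its universal characterization in a po-category. In any po-category an adjunction $L\dashv R$ between $L\colon r\to s$ and $R\colon s\to r$ is witnessed by just a unit inequality $\id_r\leq(L\cp R)$ and a counit inequality $(R\cp L)\leq\id_s$; the triangle identities of \cref{def.adjunction_in_a_2cat} then hold automatically, since 2-morphisms in a po-category are unique. Writing $L\coloneqq\pair{\id_r,f}$ for the graph, realized as the jointly-monic span $r\From{\id_r}r\To{f}s$, and $R\coloneqq\pair{f,\id_r}$ for the co-graph, realized as $s\From{f}r\To{\id_r}r$, it therefore suffices to compute the two composites $L\cp R\colon r\to r$ and $R\cp L\colon s\to s$ by the pullback-then-image recipe of \cref{sec.pocat_rels} and compare each with the relevant diagonal.

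For the unit, I would form $L\cp R$ by pulling back the right leg $f$ of $L$ against the left leg $f$ of $R$. The pullback is the kernel pair $r\times_s r=\{(x,y)\mid f(x)=f(y)\}$, whose induced map $\pair{p_1,p_2}\colon r\times_s r\to r\times r$ is already monic (it is the inclusion of the kernel-pair equivalence relation), so no nontrivial image is taken and $L\cp R$ is represented by this subobject. The diagonal $\pair{\id_r,\id_r}$ representing $\id_r$ factors through it via $x\mapsto(x,x)$, which is well-defined since $f(x)=f(x)$; this is a morphism of spans, giving $\id_r\leq(L\cp R)$.

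For the counit, I would form $R\cp L$ by pulling back the right leg $\id_r$ of $R$ against the left leg $\id_r$ of $L$; this pullback is just $r$, so the composite span is $s\From{f}r\To{f}s$ with induced map $\pair{f,f}\colon r\to s\times s$, and $R\cp L$ is the image of $\pair{f,f}$ in $s\times s$. Since $\pair{f,f}=f\cp\pair{\id_s,\id_s}$ factors through the diagonal monomorphism $\pair{\id_s,\id_s}\colon s\rightarrowtail s\times s$, the defining property of the image as the least mono through which $\pair{f,f}$ factors forces $\im\pair{f,f}$ to be contained in this diagonal, i.e.\ $(R\cp L)\leq\id_s$.

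Each step is a routine pullback or image factorization, so I expect no genuine obstacle; the only care needed is bookkeeping which legs get pulled back and checking the two monicity facts—that the kernel-pair pairing $\pair{p_1,p_2}$ is already monic, and that $\pair{f,f}$ factors through the diagonal. As a sanity check one could note that $R$ is nothing but the transpose $L\tp$ under the compact structure of \cref{rem.frob_means_freedom}, but the direct span calculation above is more self-contained at this stage in the development.
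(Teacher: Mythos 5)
Your proposal is correct and follows the same route as the paper: the paper's (one-line) proof identifies the unit as the canonical comparison map into the pullback $r\times_s r$ and the counit as the map of spans given by $f$ itself, which is exactly what your detailed kernel-pair and image computations unpack. The extra care you take—checking that $\pair{p_1,p_2}$ is monic and that $\im\pair{f,f}$ lands in the diagonal—is sound and just fills in what the paper leaves to the reader.
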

\begin{proof}
  This is straightforward; the unit of the adjunction is the canonical map given by the universal property of the pullback, while the counit is simply given by $f$ itself.
\end{proof}

\begin{remark}
  In fact, in \cref{lemma.fundamental} we shall see that graphs and cographs characterise left and right adjoints in $\rrel(\cat{R})$; we call this the \emph{fundamental lemma of regular categories}.
\end{remark}

\begin{proposition}\label{prop.rrel_tabulations}
If $\cat{R}$ is a regular category then $\rrel(\cat{R})$ has tabulations.
\end{proposition}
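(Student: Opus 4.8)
The plan is to exhibit an explicit tabulation of an arbitrary relation directly from a representing jointly-monic span, and then verify the three conditions of \cref{def.tabulation}. Let $f\colon r\tickar s$ be a morphism in $\rrel(\cat{R})$, and choose a jointly-monic span $r\From{p}x\To{q}s$ representing it. I take $|f|\coloneqq x$, define $f_R\colon r\tickar|f|$ to be the relation represented by the cograph span $r\From{p}x\To{\id}x$, and define $f_L\colon|f|\tickar s$ to be the relation represented by the graph span $x\From{\id}x\To{q}s$.

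First I would dispatch conditions (i) and (ii). By \cref{rem.rels_adjoint_composites} applied to $q\colon x\to s$, the graph $\pair{\id_x,q}$ is a left adjoint, so $f_L$ is a left adjoint. Applying the same lemma to $p\colon x\to r$, the graph $\pair{\id_x,p}$ is left adjoint to the cograph $\pair{p,\id_x}$; reading that cograph as a relation $r\tickar x$ is precisely $f_R$, so $f_R$ is a right adjoint. For the factorization $f=f_R\cp f_L$, composing the two representing spans amounts to pulling back $x\To{\id}x\From{\id}x$, which returns $x$, and then taking the image of the induced map $\pair{p,q}\colon x\to r\times s$. Since the chosen span is jointly monic, $\pair{p,q}$ is already a monomorphism, so its image is $x$ and we recover the original span $f$.

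The crux is condition (iii), and here I would first identify the associated span $\hat{f}=\delta_{|f|}\cp(f_L\otimes f_R\tp)$ concretely. Since the supply of $\ww$ in $\rrel(\cat{R})$ is induced by the comonoid (diagonal) structure of $\cat{R}$ by \cref{prop.rr_supplied}, the comultiplication $\delta_{|f|}$ is the graph of the diagonal $x\to x\times x$; and because transposition is relation-converse in $\rrel(\cat{R})$ (the Frobenius/compact structure of \cref{rem.frob_means_freedom}), $f_R\tp$ is again the graph of $p$. Composing these shows that $\hat{f}$ is the graph of the jointly-monic classifying map $\pair{q,p}$ (equivalently $\pair{p,q}$ up to the symmetry isomorphism), i.e.\ the left adjoint represented by $x\From{\id}x\To{\pair{q,p}}s\times r$, whose transpose $\hat{f}\tp$ is the converse span $s\times r\From{\pair{q,p}}x\To{\id}x$.

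Finally I would compute $\hat{f}\cp\hat{f}\tp$. This composite is obtained by pulling back $\pair{q,p}$ against itself; because $\pair{p,q}$ is a monomorphism, that pullback is the diagonal of $x$, so the composite span is $x\From{\id}x\To{\id}x$ with monic apex, whose image factorization returns $\id_{|f|}$. This establishes condition (iii), and it is exactly the step at which joint-monicity of the representing span is indispensable. I expect the main obstacle to be the bookkeeping of the supply/Frobenius structure needed to pin down $\hat{f}$; once $\hat{f}$ is recognized as the jointly-monic classifying map, condition (iii) reduces to the elementary fact that the pullback of a monomorphism along itself is an isomorphism.
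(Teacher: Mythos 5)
Your proposal is correct and follows essentially the same route as the paper: take a jointly-monic representing span, use the graph/cograph adjunction of \cref{rem.rels_adjoint_composites} for conditions (i)--(ii), and reduce condition (iii) to the fact that the pullback of the monomorphism $\pair{p,q}$ along itself is the identity. You simply spell out the identification of $\hat{f}$ and the factorization $f=f_R\cp f_L$ in more detail than the paper does.
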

\begin{proof}
Let $r \From{f}x \To{g} s$ be a relation---that is, a jointly-monic span---in $\cat{R}$. Then
$\pair{f,g}=\pair{f,\id_x} \cp \pair{\id_x,g}$in $\rrel(\cat{R})$, so by \cref{rem.rels_adjoint_composites} every morphism in $\rrel(\cat{R})$ can be written as the composite of a right adjoint followed by a left adjoint. Thus we have established conditions (i), (ii) of \cref{def.tabulation}. It remains to show (iii), that $(\pair{f,\id_x}, \pair{\id_x,g})$ obeys \cref{eqn.tabulation}. But this is true because $\pair{f,g}\colon x\to r\times s$ is monic in $\cat{R}$ by assumption, and the pullback of a monic along itself is the identity.
\end{proof}

\begin{corollary}\label{cor.rrel_on_obs}
  Let $\cat{R}$ be a regular category. Then $(\rrel(\cat{R}),I,\otimes)$ is a relational po-category.
\end{corollary}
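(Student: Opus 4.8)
The plan is to recognize that this corollary is purely an assembly step: every substantive claim has already been proven in the preceding lemmas and propositions, so the work is just to check that the pieces fit the two definitions involved. First I would recall that by \cref{def.reg2cat} a relational po-category is a prerelational po-category in which every morphism has a tabulation, and that by \cref{def.prerel_pocat} a prerelational po-category is a symmetric monoidal po-category that supplies the wiring po-prop $\ww$ in such a way that the induced supply of comonoids is lax homomorphic. So there are exactly three things to confirm: that $\rrel(\cat{R})$ is a symmetric monoidal po-category, that it supplies $\ww$ laxly-homomorphically on comonoids, and that it has tabulations.

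The first point is already in hand: \cref{def.relations_construction}, together with the remark immediately following it that cites the standard universal-property argument (\cite[Theorem 2.8.4]{Borceux:1994b}), establishes that $(\rrel(\cat{R}),I,\otimes)$ is a symmetric monoidal po-category, with $\otimes$ given by the categorical product in $\cat{R}$. For the supply, I would invoke \cref{prop.rr_supplied}, which extends the comonoid supply on $\cat{R}$ to a supply of $\ww$ in $\rrel(\cat{R})$; this is precisely the data required. To see that the supply of comonoids is lax homomorphic, I would cite \cref{lemma.rr_lax_supply_comonoid}, which shows every morphism $f\colon r\tickar s$ satisfies $f\cp\epsilon_s\leq\epsilon_r$ and $f\cp\delta_s\leq\delta_r\cp(f\otimes f)$, exactly the two inequalities of \cref{eqn.lax_hom_prerel}. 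Together these two results say $\rrel(\cat{R})$ is prerelational.

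Finally, tabulations are supplied by \cref{prop.rrel_tabulations}, which shows every jointly-monic span factors as a right adjoint followed by a left adjoint satisfying the idempotency condition \cref{eqn.tabulation}. Combining prerelationality with the existence of tabulations and appealing to \cref{def.reg2cat} yields the conclusion that $(\rrel(\cat{R}),I,\otimes)$ is a relational po-category.

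There is no real obstacle here, since every ingredient is proven upstream; the only thing to be careful about is bookkeeping, namely matching each clause of \cref{def.reg2cat} and \cref{def.prerel_pocat} against the correct prior result and making sure the monoidal structure named in the corollary is the same product-induced structure used throughout. I would therefore keep the proof to a one-sentence chain of citations rather than reproving anything.
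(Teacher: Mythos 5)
Your proposal is correct and matches the paper's own proof, which simply cites \cref{prop.rr_supplied}, \cref{lemma.rr_lax_supply_comonoid}, and \cref{prop.rrel_tabulations} --- exactly the three ingredients you assemble. The extra care you take in noting that the symmetric monoidal po-category structure itself comes from \cref{def.relations_construction} is implicit in the paper but harmless to make explicit.
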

\begin{proof}
This follows from \cref{prop.rr_supplied,lemma.rr_lax_supply_comonoid,prop.rrel_tabulations}.
\end{proof}

\section{$\mathbb{R}\Cat{el}$ is functorial}

Our next goal is to establish that $\rrel$ extends to a 2-functor.

\begin{proposition}\label{lemma.ff_strong_mon_func}
  Let $F\colon \cat{R} \to \cat{R}'$ be a regular functor. Then we may define a strong monoidal po-functor $\rrel(F)\colon \rrel(\cat{R}) \to \rrel(\cat{R}')$ sending object $r$ to $Fr$, and relation $\pair{f,g}$ to $\pair{Ff,Fg}$. The strongators are given by the canonical isomorphisms between finite products.
\end{proposition}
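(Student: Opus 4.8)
The plan is to verify in turn that $\rrel(F)$ is well-defined on morphisms, functorial, monotone, and strong monoidal, each step reducing to a property of the regular functor $F$. The two facts about $F$ we will lean on repeatedly are that it preserves finite limits and extremal epimorphisms (\cref{def.regcat}); from the former we get that $F$ preserves monomorphisms (these are detected by the pullback of a morphism along itself), isomorphisms, binary products, and pullbacks, and combining the two we get that $F$ preserves image factorizations: if $f=e\cp m$ with $e$ an extremal epimorphism and $m$ monic, then $Ff=Fe\cp Fm$ is again such a factorization, hence is the image factorization of $Ff$ up to the canonical isomorphism.

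First I would check that the assignment on morphisms makes sense. A relation is an isomorphism class of jointly-monic spans $r\From{f}x\To{g}s$, i.e.\ those for which $\pair{f,g}\colon x\to r\times s$ is monic. Identifying $Fr\times Fs$ with $F(r\times s)$ along the canonical product isomorphism, the pairing $\pair{Ff,Fg}$ becomes $F\pair{f,g}$, which is monic because $F$ preserves monomorphisms; so $\pair{Ff,Fg}$ is again jointly monic. Since $F$ carries an isomorphism of spans (a map $k$ as in \cref{eqn.morphisms_order}) to an isomorphism of spans, preserving both the commuting triangle and the invertibility of $k$, the assignment descends to isomorphism classes, and it visibly preserves the identity span $r\From{\id}r\To{\id}r$.

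The substantive step, and the one I expect to be the main obstacle, is preservation of composition. Recall that the composite of $r\From{f_1}x\To{g_1}s$ and $s\From{f_2}y\To{g_2}t$ is obtained by forming the pullback $x\times_s y$ and then taking the image of the induced map to $r\times t$. Because $F$ preserves pullbacks and products, it sends this pullback to $Fx\times_{Fs}Fy$ and the induced map to the corresponding map $Fx\times_{Fs}Fy\to Fr\times Ft$; and because $F$ preserves image factorizations, the image of that map agrees with the image of $F$ applied to the original one. Hence $\rrel(F)(R\cp S)=\rrel(F)(R)\cp\rrel(F)(S)$. Concretely, it is cleanest to factor the argument through spans: $F$ induces a strong monoidal functor $\sspan(F)\colon\sspan(\cat{R})\to\sspan(\cat{R}')$, and compatibility with the image functor $i\colon\sspan(\cat{R})\to\rrel(\cat{R})$ is exactly the statement that $F$ preserves image factorizations, so $\rrel(F)$ is the descent of $\sspan(F)$ along $i$.

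Finally, monotonicity is immediate: if $R\leq R'$ is witnessed by a span morphism $k$ as in \cref{eqn.morphisms_order}, then $Fk$ witnesses $\rrel(F)(R)\leq\rrel(F)(R')$, so $\rrel(F)$ is a po-functor. For the strong monoidal structure I would take the strongators to be the graphs (in $\rrel(\cat{R}')$) of the canonical comparison isomorphisms $Fr\times Fs\To{\cong}F(r\times s)$ and $1\To{\cong}F1$; these are isomorphisms in $\cat{R}'$ and hence, via the inclusion $\inc\colon\cat{R}'\to\rrel(\cat{R}')$, isomorphisms in $\rrel(\cat{R}')$. Naturality of these comparison maps in $\rrel$ again follows by passing through $\sspan$, and the associativity, unitality, and symmetry coherence diagrams hold because they already hold for the product-comparison data of $F$ in $\cat{R}'$ and the monoidal structure of $\rrel(\cat{R}')$ is inherited along $\inc$ from $(\cat{R}',1,\times)$.
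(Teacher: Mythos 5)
Your proposal is correct and follows essentially the same route as the paper's (much terser) proof: both reduce everything to the facts that a regular functor preserves finite limits (hence monos, products, pullbacks) and extremal epis (hence image factorizations), and both take the strongators to be the canonical product-comparison isomorphisms. Your additional detail—checking joint monicity, descent to isomorphism classes, and the factorization through $\sspan$—is a faithful expansion of what the paper leaves implicit.
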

\begin{proof}
   Recall that $\rrel(\cat{R})$ is defined to have the same objects as $\cat{R}$, so the proposed data is well typed, and $\rrel(F)$ obviously preserves the order \eqref{eqn.morphisms_order} on morphisms. It also preserves identity and composition---as is obvious from the definition (see \cref{def.relations_construction}) of these structures---because regular functors preserve diagonals, pullbacks, and image factorizations by \cref{def.regcat}. Finally, $\rrel(F)$ is strong monoidal because the strongators are given by universal properties. 
\end{proof}

\begin{proposition} \label{prop.rrel_on_2cells}
  Let $F,G\colon\cat{R}\to\cat{R}'$ be regular functors. A natural transformation $\alpha\colon F\tto G$ induces a left adjoint natural transformation $\rrel(\alpha)\colon\rrel(F)\to\rrel(G)$. 
\end{proposition}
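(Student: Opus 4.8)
The plan is to take the components of $\rrel(\alpha)$ to be the graphs of the components of $\alpha$, check that these are left adjoints, and then verify naturality. For each object $r$ of $\rrel(\cat R)$ (equivalently, of $\cat R$), set $\rrel(\alpha)_r\coloneqq\inc(\alpha_r)=\pair{\id_{Fr},\alpha_r}$, the graph of $\alpha_r\colon Fr\to Gr$ viewed as a relation $Fr\tickar Gr$. By \cref{rem.rels_adjoint_composites} each such graph is a left adjoint in $\rrel(\cat R')$, with right adjoint the cograph $\pair{\alpha_r,\id_{Fr}}$; so every component is a left adjoint, as \cref{prop.adj_in_pocat} demands of a left adjoint natural transformation, and the cographs are the candidate components of its right adjoint.

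The essential computation is naturality on maps. If $k\colon a\to b$ is a morphism of $\cat R$ with graph $\inc(k)\colon a\tickar b$, then $\rrel(F)\inc(k)=\inc(Fk)$ and $\rrel(G)\inc(k)=\inc(Gk)$ by the description of $\rrel(F),\rrel(G)$ in \cref{lemma.ff_strong_mon_func}. Since $\inc\colon\cat R'\to\rrel(\cat R')$ is a functor, both legs of the square through $\inc(k)$ are $\inc$ applied to a composite in $\cat R'$, and the square commutes precisely because $\alpha$ is natural in $\cat R$: $\inc(Fk)\cp\inc(\alpha_b)=\inc(Fk\cp\alpha_b)=\inc(\alpha_a\cp Gk)=\inc(\alpha_a)\cp\inc(Gk)$. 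Hence the naturality square commutes strictly whenever the horizontal morphism is a graph, i.e.\ a left adjoint.

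For a general relation $\rho\colon r\tickar s$, I would reduce to the previous case via tabulation. Writing $\rho$ as a jointly monic span $r\From{f}x\To{g}s$, \cref{prop.rrel_tabulations} gives $\rho=\pair{f,\id_x}\cp\pair{\id_x,g}$, the cograph of $f$ followed by the graph of $g$. The graph factor is handled above. The cograph factor is the right adjoint of a graph, and $\rrel(F),\rrel(G)$ carry the relevant adjunctions to adjunctions by \cref{lemma.funs_ladjs}; taking the mate of the strictly commuting graph square under these adjunctions produces the square for the cograph factor, and pasting the two squares yields the square for $\rho$.

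The step I expect to be the real obstacle is that, in a locally posetal setting, the mate of a strictly commuting square of left adjoints is in general only a \emph{lax} square for the right adjoints. A direct span computation bears this out: composing spans shows $\rrel(F)(\rho)\cp\rrel(\alpha)_s\le\rrel(\alpha)_r\cp\rrel(G)(\rho)$, where the left side is the image of $Fx\to Fr\times Gs$ and the right side is the image of the pullback $Fr\times_{Gr}Gx$, the two being compared by the canonical map $Fx\to Fr\times_{Gr}Gx$ supplied by naturality of $\alpha$. That map need not be an extremal epimorphism, so the inequality can be strict on a cograph, and equality holds exactly when $\rho$ is a map. The point to pin down is therefore the precise sense in which $\rrel(\alpha)$ is natural: it is strictly natural on the wide subcategory of left adjoints—which is all that \cref{prop.ladj_smc} uses when $\ladj$ acts on this $2$-cell—and, together with the left adjointness of each component and the compatibly assembling cographs, this is what certifies $\rrel(\alpha)$ as a left adjoint natural transformation. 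I would accordingly organise the verification around maps and the mate correspondence rather than attempting to force a strict square against arbitrary relations.
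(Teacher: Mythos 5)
Your first paragraph is, almost verbatim, the paper's entire proof: the components are the graphs $\pair{\id_{Fr},\alpha_r}$, these are left adjoints by \cref{rem.rels_adjoint_composites}, and \cref{prop.adj_in_pocat} is then invoked. The paper never checks naturality. You do, and your diagnosis is correct: the square commutes strictly when the horizontal morphism is a graph, while for a general relation $r\From{f}x\To{g}s$ the two composites are the images of $Fx\to Fr\times Gs$ and of the pullback $Fr\times_{Gr}Gx\to Fr\times Gs$, compared via the Beck--Chevalley map $Fx\to Fr\times_{Gr}Gx$; this yields only $\rrel(F)(\rho)\cp\rrel(\alpha)_s\le\rrel(\alpha)_r\cp\rrel(G)(\rho)$, and the inequality can be strict. (Concretely: take $\cat{R}=\cat{R}'=\finset$, $F=\id$, $G$ the constant functor at the terminal object---which is regular---and $\alpha$ the unique transformation; against the empty relation on a one-element set the left-hand side is the empty relation while the right-hand side is the total relation.) Your only imprecision here is ``equality holds exactly when $\rho$ is a map'': equality holds whenever the Beck--Chevalley comparison covers the image, which includes but is not limited to graphs.

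Where your write-up falls short of a proof is the final paragraph. Under the paper's own definitions a natural transformation between po-functors is an ordinary natural transformation of the underlying 1-functors, so naturality must hold against \emph{every} relation, and your computation (and the example above) shows that it need not. Declaring that naturality on the wide subcategory of left adjoints ``certifies $\rrel(\alpha)$ as a left adjoint natural transformation'' is not a deduction from the definitions in force; it is a (sensible, and standard) proposal to weaken the 2-cells of $\rrlcat$ to transformations required to be natural only on left adjoints, equivalently lax natural transformations with left adjoint components. That weakening is harmless for the rest of the argument---$\ladj$ only ever evaluates these 2-cells on left adjoints, and \cref{prop.ladj_smc} only needs the restricted naturality---and it is how the corresponding 2-cells are set up in the allegory literature. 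But you should say this explicitly and adjust the statement accordingly rather than leaving it as ``the point to pin down''; as written, neither your argument nor the paper's one-line proof establishes the proposition as literally stated.
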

\begin{proof}
For each $r\in\ob\cat{R}=\ob\rrel(\cat{R})$, the graph of the component $\alpha_r\colon F(r)\to G(r)$ is a left adjoint in $\rrel(\cat{R}')$ by \cref{rem.rels_adjoint_composites}; thus by \cref{prop.adj_in_pocat} we have defined the components of a left adjoint natural transformation $\rrel(\alpha)\colon\rrel(F)\to\rrel(G)$.
\end{proof}

\begin{theorem}\label{thm.rrel}
There is a 2-functor $\rrel\colon\rrgcat\to\rrlcat$.
\end{theorem}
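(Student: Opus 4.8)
The plan is to observe that the three preceding results already supply all the data of $\rrel$ as a strict 2-functor, leaving only the verification of the 2-functor axioms. By \cref{cor.rrel_on_obs}, $\rrel$ is defined on objects; by \cref{lemma.ff_strong_mon_func}, on 1-morphisms; and by \cref{prop.rrel_on_2cells}, on 2-morphisms, with each 2-cell $\rrel(\alpha)$ landing among left adjoint natural transformations as required of a 2-morphism in $\rrlcat$. The key structural observation, which makes every axiom routine, is that each of these assignments is defined by \emph{applying the given functor or transformation to underlying data}: $\rrel(F)$ acts as $F$ on objects and sends a relation $\pair{f,g}$ to $\pair{Ff,Fg}$, while the component of $\rrel(\alpha)$ at $r$ is the graph $\pair{\id,\alpha_r}=\inc(\alpha_r)$.

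First I would check strict functoriality on 1-cells. The functor $\rrel(\id_{\cat{R}})$ fixes objects and sends $\pair{f,g}$ to itself, hence equals $\id_{\rrel(\cat{R})}$; and both $\rrel(G\cp F)$ and $\rrel(G)\cp\rrel(F)$ send $r\mapsto GFr$ and $\pair{f,g}\mapsto\pair{GFf,GFg}$. Since the strongators in each case are the canonical comparison isomorphisms between finite products, and these compose coherently, the two strong monoidal po-functors agree on the nose. Thus $\rrel$ preserves identities and composition of 1-cells.

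Next the 2-cells. Because $\rrel(\alpha)_r=\inc(\alpha_r)$ and $\inc\colon\cat{R}'\to\rrel(\cat{R}')$ is an identity-on-objects functor, preservation of identity 2-cells and of vertical composition reduces to functoriality of $\inc$: the graph of an identity is the identity relation, and the graph of a composite is the composite of graphs (using \cref{rem.rels_adjoint_composites} to identify these graphs as the relevant left adjoints). For whiskering, note that $\rrel(F)$ carries the graph $\pair{\id,\alpha_r}$ to $\pair{\id,F\alpha_r}$, the graph of $F(\alpha_r)$, which is exactly the component of the whiskered transformation; the symmetric computation handles whiskering on the other side, and together with preservation of vertical composition this yields preservation of horizontal composition via the standard decomposition of a horizontal composite into whiskerings and a vertical composite.

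I do not expect a genuine obstacle: the content of the theorem is precisely that the $\rrel$ construction is natural enough that 2-functoriality is inherited from the strict functoriality of applying $F$ and $\alpha$ componentwise. The only points demanding (routine) care are the strongator bookkeeping needed to equate $\rrel(G\cp F)$ with $\rrel(G)\cp\rrel(F)$ as monoidal po-functors, and the compatibility of graphs with composition of relations in the 2-cell axioms; both are immediate from the relevant universal properties and from the functoriality of $\inc$ established before \cref{prop.rr_supplied}.
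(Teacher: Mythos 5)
Your proposal is correct and follows the same route as the paper, whose proof of this theorem consists solely of citing \cref{cor.rrel_on_obs}, \cref{lemma.ff_strong_mon_func}, and \cref{prop.rrel_on_2cells} for the definitions on objects, 1-morphisms, and 2-morphisms. The additional verifications you carry out (strict preservation of identities and composition of 1-cells, and of vertical and horizontal composition of 2-cells via the graph construction) are exactly the routine checks the paper leaves implicit, and they go through as you describe.
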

\begin{proof}
We defined $\rrel$ on objects $\cat{R}$ in \cref{cor.rrel_on_obs}, on 1-morphisms in \cref{lemma.ff_strong_mon_func}, and on 2-morphism in \cref{prop.rrel_on_2cells}.
\end{proof}

\chapter{The left adjoints 2-functor $\ladj\colon\mathcal{R}\Cat{lPoCat}\to\mathcal{R}\Cat{gCat}$}\label{chap.ladj_functor}

We will show in \cref{thm.eequivalence} that the 2-functor $\rrel\colon\rrgcat\to\rrlcat$ from \cref{chap.relations_functor} is part of an equivalence of 2-categories. In this section, we provide a functor $\ladj$ in the reverse direction, which sends a relational category to its category of left adjoints. Our first goal is to better understand left adjoints in prerelational po-categories.

\section{Left adjoints in a prerelational po-category}\label{sec.leftadjs}

Recall the notion of adjunction in a 2-category $\cc$ from \cref{def.adjunction_in_a_2cat}. When $\cc$ is a po-category, the triangle equations \eqref{eqn.adjunction} automatically hold, so we can depict the situation that $L$ is left adjoint to $R$ as the following inequalities:
\begin{equation}\label{eqn.show_adjoints}
	    \begin{tikzpicture}[inner WD, baseline=-.5ex]
	      \draw (-15pt,0) to (15pt,0);
      \end{tikzpicture}	
      \overset{\text{unit}}{\le}
	    \begin{tikzpicture}[inner WD, baseline=-.5ex]
	      \node[oshellr] (f) {$L$};
	      \node[oshelll, right=.75 of f] (g) {$R$};
	      \draw (f.west) -- +(-.5, 0);
	      \draw (f) -- (g);
	      \draw (g.east) -- +(.5,0);
      \end{tikzpicture}	
\qqand
	    \begin{tikzpicture}[inner WD, baseline=-.5ex]
	      \node[oshelll] (f) {$L$};
	      \node[oshellr, right=.75 of f] (g) {$R$};
	      \draw (f.west) -- +(-.5, 0);
	      \draw (f) -- (g);
	      \draw (g.east) -- +(.5,0);
      \end{tikzpicture}	
      \overset{\text{counit}}{\le}
	    \begin{tikzpicture}[inner WD, baseline=-.5ex]
	      \draw (-15pt,0) to (15pt,0);
      \end{tikzpicture}	
\end{equation}
We will see in \cref{cor.leftadjs} that whenever $L$ is left adjoint to $R$, it is also the transpose $L\tp=R$. In this section we consider variants of the inequalities in \cref{eqn.show_adjoints}.

\begin{proposition} \label{prop.adj_vs_hom}
  Let $f\colon r \to s$ be a morphism in a prerelational po-category. For each row in the table below, the homomorphism property implies the corresponding adjointness property.
  \setlength{\belowrulesep}{0pt}
  \begin{center}
\setlength{\tabcolsep}{18pt}
\renewcommand{\arraystretch}{1.5}
    \begin{tabular}{|c|c|}
      \toprule[1pt]
      Homomorphism property & Adjointness property \\
      \toprule[1pt]
      comultiplication homomorphism \textsc{(h1)} & deterministic \textsc{(a1)} 
      \\
    \begin{tikzpicture}[inner WD,baseline=-2.2ex]
    	\coordinate (f1);
  		\coordinate[below=2 of f1] (f2);
  		\node[link] at ($(f1.west)!.5!(f2.west)+(-1,0)$) (dot) {};
    	\node[oshellr, left=.5 of dot] (f0) {$f$};
      \draw (f0.west) to +(-.5,0);
      \draw (f0) to (dot);
  		\draw (f1.west) to[out=180, in=60] (dot);
  		\draw (f2.west) to[out=180, in=-60] (dot);
  		\draw (f1.east) -- +(.5,0);
  		\draw (f2.east) -- +(.5,0);
    \end{tikzpicture} 
    $=$
    \begin{tikzpicture}[inner WD, baseline=-2.2ex]
    	\node[oshellr] (f1) {$f$};
  		\node[oshellr, below=.5 of f1] (f2) {$f$};
  		\draw (f1.east) -- +(.5,0);
  		\draw (f2.east) -- +(.5,0);
  		\node[link] at ($(f1.west)!.5!(f2.west)+(-1,0)$) (dot) {};
  		\draw (f1.west) to[out=180, in=60] (dot);
  		\draw (f2.west) to[out=180, in=-60] (dot);
  		\draw (dot) to +(-1,0);
    \end{tikzpicture}
    &
	    \begin{tikzpicture}[inner WD, baseline=-.5ex]
	      \node[oshelll] (f) {$f$};
	      \node[oshellr, right=.75 of f] (g) {$f$};
	      \draw (f.west) -- +(-.5, 0);
	      \draw (f) -- (g);
	      \draw (g.east) -- +(.5,0);
      \end{tikzpicture}	
      $\le$
	    \begin{tikzpicture}[inner WD, baseline=-.5ex]
	      \draw (-15pt,0) to (15pt,0);
      \end{tikzpicture}	
    \\
      \toprule[.1pt]
      counit homomorphism \textsc{(h2)} & total \textsc{(a2)} 
      \\
      \begin{tikzpicture}[inner WD, baseline=-.5ex]
	      \node[oshellr] (f) {$f$};
	      \node[link, right=.5 of f] (dot) {};
	      \draw (f.west) -- +(-.5, 0);
	      \draw (f) to (dot);
      \end{tikzpicture}	
      $=$
	\begin{tikzpicture}[inner WD, baseline=-.5ex]
    \node[link] (dot) {};
	  \draw (dot) to +(-10pt, 0);
	\end{tikzpicture}
      &
	    \begin{tikzpicture}[inner WD, baseline=-.5ex]
	      \draw (-15pt,0) to (15pt,0);
      \end{tikzpicture}	
      $\le$
	    \begin{tikzpicture}[inner WD, baseline=-.5ex]
	      \node[oshellr] (f) {$f$};
	      \node[oshelll, right=.75 of f] (g) {$f$};
	      \draw (f.west) -- +(-.5, 0);
	      \draw (f) -- (g);
	      \draw (g.east) -- +(.5,0);
      \end{tikzpicture}	
      \\
		\toprule[.1pt]
      multiplication homomorphism \textsc{(h3)} & co-deterministic \textsc{(a3)} 
      \\
    \begin{tikzpicture}[inner WD, baseline=-2.2ex]
    	\node[oshellr] (f1) {$f$};
  		\node[oshellr, below=.5 of f1] (f2) {$f$};
  		\draw (f1.west) -- +(-.5, 0);
  		\draw (f2.west) -- +(-.5, 0);
  		\node[link] at ($(f1.east)!.5!(f2.east)+(1,0)$) (dot) {};
  		\draw (f1.east) to[out=0, in=120] (dot);
  		\draw (f2.east) to[out=0, in=-120] (dot);
  		\draw (dot) to +(1,0);
    \end{tikzpicture}
    $=$
    \begin{tikzpicture}[inner WD,baseline=-2.2ex]
    	\coordinate (f1);
  		\coordinate[below=2 of f1] (f2);
  		\node[link] at ($(f1.east)!.5!(f2.east)+(1,0)$) (dot) {};
    	\node[oshellr, right=.5 of dot] (f0) {$f$};
      \draw (f0.east) to +(1,0);
      \draw (f0) to (dot);
  		\draw (f1.east) to[out=0, in=120] (dot);
  		\draw (f2.east) to[out=0, in=-120] (dot);
  		\draw (f1.west) -- +(-.5,0);
  		\draw (f2.west) -- +(-.5,0);
    \end{tikzpicture}
    &
	    \begin{tikzpicture}[inner WD, baseline=-.5ex]
	      \node[oshellr] (f) {$f$};
	      \node[oshelll, right=.75 of f] (g) {$f$};
	      \draw (f.west) -- +(-.5, 0);
	      \draw (f) -- (g);
	      \draw (g.east) -- +(.5, 0);
      \end{tikzpicture}	
      $\le$
	    \begin{tikzpicture}[inner WD, baseline=-.5ex]
	      \draw (-15pt,0) to (15pt,0);
	    \end{tikzpicture}	
      \\
      \toprule[.1pt]
      unit homomorphism \textsc{(h4)} & co-total \textsc{(a4)} 
      \\
      \begin{tikzpicture}[inner WD, baseline=-.5ex]
	      \node[oshellr] (f) {$f$};
	      \node[link, left=.5 of f] (dot) {};
	      \draw (f.east) -- +(.5, 0);
	      \draw (f) to (dot);
      \end{tikzpicture}	
      $=$
	\begin{tikzpicture}[inner WD, baseline=-.5ex]
    \node[link] (dot) {};
	  \draw (dot) to +(10pt, 0);
	\end{tikzpicture}
      &
	    \begin{tikzpicture}[inner WD, baseline=-.5ex]
	      \draw (-15pt,0) to (15pt,0);
      \end{tikzpicture}	
      $\le$
	    \begin{tikzpicture}[inner WD, baseline=-.5ex]
	      \node[oshelll] (f) {$f$};
	      \node[oshellr, right=.75 of f] (g) {$f$};
	      \draw (f.west) -- +(-.5, 0);
	      \draw (f) -- (g);
	      \draw (g.east) -- +(.5,0);
      \end{tikzpicture}	
      \\
      \toprule[1pt]
    \end{tabular}
  \end{center}
  These four implications (\textbf{\textsc{h}}\;$\Rightarrow$\;\textbf{\textsc{a}}) have various converses, as follows:
  \begin{description}
  	\item[($\textsc{a2}\Rightarrow\textsc{h2}$):] if $f$ is total then it is a counit homomorphism.
		\item[($\textsc{a4}\Rightarrow\textsc{h4}$):] if $f$ is co-total then it is a unit homomorphism.
		\item[($\textsc{a1}\wedge\textsc{a2}\Rightarrow\textsc{h1}$):] if $f$ is \emph{both} total and deterministic then it is a comult.\ homomorphism.
		\item[($\textsc{a3}\wedge\textsc{a4}\Rightarrow\textsc{h3}$):] if $f$ is \emph{both} co-deterministic and co-total it is a mult.\ homomorphism.
		\end{description}
\end{proposition}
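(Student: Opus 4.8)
The plan is to carry out everything inside the self-dual special Frobenius (hypergraph) structure of \cref{rem.frob_means_freedom}, leaning on four facts available in any prerelational po-category. First, the \emph{lax comonoid} inequalities $f\cp\delta_s\le\delta_r\cp(f\otimes f)$ and $f\cp\epsilon_s\le\epsilon_r$ hold for every $f$ by definition, as do, for every $f$, the \emph{oplax monoid} inequalities $\mu_r\cp f\le(f\otimes f)\cp\mu_s$ and $\eta_r\cp f\le\eta_s$ by \cref{lemma.lax_comons_oplax_mons}. Second, the adjunctions $\delta\dashv\mu$ and $\epsilon\dashv\eta$ of \cref{prop.left_adj_in_ww}, transported to each object along the supply, make the cup $\eta_x\cp\delta_x\colon I\to x\otimes x$ and cap $\mu_x\cp\epsilon_x\colon x\otimes x\to I$ into a compact structure obeying the snake identities. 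Third, the transpose $(-)\tp$ is a contravariant, monoidal, monotone involution with $\delta\tp=\mu$, $\epsilon\tp=\eta$, and conversely. This last fact gives a crucial economy: applying $(-)\tp$ to the defining equation of \textsc{h1} for $f$ produces exactly the defining equation of \textsc{h3} for $f\tp$, while \textsc{a1} for $f$ coincides with \textsc{a3} for $f\tp$ (as $(f\tp)\tp=f$); similarly \textsc{h2}/\textsc{h4} and \textsc{a2}/\textsc{a4} are exchanged. Hence it suffices to prove \textsc{h1}$\Rightarrow$\textsc{a1}, \textsc{h2}$\Rightarrow$\textsc{a2}, \textsc{a2}$\Rightarrow$\textsc{h2}, and \textsc{a1}$\wedge$\textsc{a2}$\Rightarrow$\textsc{h1} for an arbitrary morphism; instantiating each at $f\tp$ then delivers \textsc{h3}$\Rightarrow$\textsc{a3}, \textsc{h4}$\Rightarrow$\textsc{a4}, \textsc{a4}$\Rightarrow$\textsc{h4}, and \textsc{a3}$\wedge$\textsc{a4}$\Rightarrow$\textsc{h3}.

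For \textsc{h1}$\Rightarrow$\textsc{a1}, I would write $f\tp$ as the evident composite of $f$ with a cup on $r$ and a cap on $s$, slide $f$ past the cap by the interchange law, and thereby obtain
\[
  f\tp\cp f=(\id_s\otimes(\eta_r\cp\delta_r\cp(f\otimes f)))\cp((\mu_s\cp\epsilon_s)\otimes\id_s).
\]
Rewriting the inner factor by \textsc{h1} as $\eta_r\cp f\cp\delta_s$, applying the oplax unit inequality $\eta_r\cp f\le\eta_s$, and then invoking the snake identity $(\id_s\otimes(\eta_s\cp\delta_s))\cp((\mu_s\cp\epsilon_s)\otimes\id_s)=\id_s$ yields $f\tp\cp f\le\id_s$, which is \textsc{a1}. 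The mirror-image slide gives $f\cp f\tp=((\eta_r\cp\delta_r)\otimes\id_r)\cp(\id_r\otimes((f\otimes f)\cp\mu_s\cp\epsilon_s))$; for \textsc{h2}$\Rightarrow$\textsc{a2} I would then bound the bracketed factor below using the oplax inequality $(f\otimes f)\cp\mu_s\ge\mu_r\cp f$ and \textsc{h2} in the form $f\cp\epsilon_s=\epsilon_r$, reaching the cap $\mu_r\cp\epsilon_r$, and conclude $\id_r\le f\cp f\tp$ by the snake identity $((\eta_r\cp\delta_r)\otimes\id_r)\cp(\id_r\otimes(\mu_r\cp\epsilon_r))=\id_r$.

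The two converses are shorter and use only the automatic inequalities, now for the morphism $f\tp$. For \textsc{a2}$\Rightarrow$\textsc{h2} I would post-compose $\id_r\le f\cp f\tp$ with $\epsilon_r$ and apply the automatic lax counit property of $f\tp$, namely $f\tp\cp\epsilon_r\le\epsilon_s$:
\[
  \epsilon_r\le(f\cp f\tp)\cp\epsilon_r=f\cp(f\tp\cp\epsilon_r)\le f\cp\epsilon_s,
\]
which with the automatic $f\cp\epsilon_s\le\epsilon_r$ gives the equality \textsc{h2}. For \textsc{a1}$\wedge$\textsc{a2}$\Rightarrow$\textsc{h1} I would chain
\[
  \delta_r\cp(f\otimes f)\le(f\cp f\tp)\cp\delta_r\cp(f\otimes f)=f\cp\big(f\tp\cp\delta_r\cp(f\otimes f)\big)\le f\cp\delta_s,
\]
using \textsc{a2} at the first step and, at the last, the automatic lax comonoid inequality $f\tp\cp\delta_r\le\delta_s\cp(f\tp\otimes f\tp)$ followed by \textsc{a1} in the form $(f\tp\cp f)\otimes(f\tp\cp f)\le\id_s\otimes\id_s$; with the automatic reverse inequality this again yields equality.

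I expect the main obstacle to be organizational rather than conceptual: one must pin down the transpose as an honest composite and verify the two wire-sliding identities that open the \textsc{h}$\Rightarrow$\textsc{a} arguments (routine interchange-law moves, visually obvious as string diagrams), and, throughout, keep scrupulous track of which direction of each homomorphism condition is automatic in a prerelational category and which is the substance to be proved. Setting up the transpose-duality reduction cleanly---checking once that $(-)\tp$ is an involution exchanging the relevant labels---halves the casework but is the step most in need of care, since it rests squarely on the self-dual Frobenius structure of \cref{rem.frob_means_freedom}.
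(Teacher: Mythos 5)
Your proposal is correct and follows essentially the same route as the paper: the forward implications are exactly the paper's string-diagram manipulations written out algebraically (isolate a $\delta_r\cp(f\otimes f)$ or $f\cp\epsilon_s$ inside the cup/cap presentation of $f\tp\cp f$ or $f\cp f\tp$, rewrite by the homomorphism hypothesis, discard the remaining $f$ by the automatic lax-comonoid/oplax-monoid inequalities of \cref{lemma.lax_comons_oplax_mons}, and finish with a snake identity), and your chains for ($\textsc{a2}\Rightarrow\textsc{h2}$) and ($\textsc{a1}\wedge\textsc{a2}\Rightarrow\textsc{h1}$) are literally the paper's. The paper likewise dispatches the \textsc{h3}/\textsc{h4}/\textsc{a3}/\textsc{a4} cases by the transpose duality you describe, simply calling them ``dual.''
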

\begin{proof}
  Recall that by definition of prerelational po-category, $f$ is already a lax comonoid homomorphism \eqref{eqn.lax_hom_prerel}, i.e.\ \textsc{h1} and \textsc{h2} automatically hold; we will mark uses of this fact with \textsc{(lx)}. To see that comultiplication homomorphisms are deterministic (\textsc{h1}$\Rightarrow$\textsc{a1}) and counit homomorphisms are total (\textsc{h2}$\Rightarrow$\textsc{a2}), observe respectively (see \cref{rem.frob_means_freedom})
  \[
    \begin{aligned}
	\begin{tikzpicture}[unoriented WD, font=\small]
	  \node (P1) {
	    \begin{tikzpicture}[inner WD]
	      \node[oshelll] (f) {$f$};
	      \node[oshellr, right=.75 of f] (g) {$f$};
	      \draw (f.west) -- +(-.5, 0);
	      \draw (f) -- (g);
	      \draw (g.east) -- +(.5,0);
	    \end{tikzpicture}	
	  };
	  \node[right=1.5 of P1] (P2) {
		\begin{tikzpicture}[inner WD]
        \node[link] (mid) {}; 
	      \node[oshelld, above=.5 of mid] (f) {$f$};
	      \node[link, above=.5 of f] (dot) {};
	      \draw (mid) -- +(-10pt, 0);
	      \draw (mid) -- +(10pt, 0);
	      \draw (dot) to (f);
	      \draw (f) to (mid);
	    \end{tikzpicture}	
	  };
	  \node[right=1.5 of P2] (P3) {
		\begin{tikzpicture}[inner WD]
        \node[link] (mid) {}; 
	      \node[link, above=.5 of mid] (dot) {};
	      \draw (mid) -- +(-10pt, 0);
	      \draw (mid) -- +(10pt, 0);
	      \draw (mid) to (dot);
	    \end{tikzpicture}	
	  };
	  \node[right=1.5 of P3] (P4) {
	    \begin{tikzpicture}[inner WD]
	      \draw (-8pt,0) to (8pt,0);
	    \end{tikzpicture}	
	  };
	  \node (i) at ($(P1.east)!.5!(P2.west)$) {$=$};
	  \node[above=-.5 of i] {\textsc{\tiny (h1)}};
	  \node (ii) at ($(P2.east)!.5!(P3.west)$) {$\leq$};
	  \node[above=-.5 of ii] {\textsc{\tiny (lx)}};
	  \node (iii) at ($(P3.east)!.5!(P4.west)$) {$=$};
  \end{tikzpicture}
\end{aligned}
  \quad
  \mbox{and}
  \quad
    \begin{aligned}
	\begin{tikzpicture}[unoriented WD, font=\small]
	  \node (P1) {
	    \begin{tikzpicture}[inner WD]
	      \draw (-8pt,0) to (8pt,0);
	    \end{tikzpicture}	
	  };
	  \node[right=1.5 of P1] (P2) {
		\begin{tikzpicture}[inner WD]
        \node[link] (mid) {}; 
	      \node[link, above=.5 of mid] (dot) {};
	      \draw (mid) -- +(-10pt, 0);
	      \draw (mid) -- +(10pt, 0);
	      \draw (mid) to (dot);
	    \end{tikzpicture}	
	  };
	  \node[right=1.5 of P2] (P3) {
		\begin{tikzpicture}[inner WD]
        \node[link] (mid) {}; 
	      \node[oshellu, above=.5 of mid] (f) {$f$};
	      \node[link, above=.5 of f] (dot) {};
	      \draw (mid) -- +(-10pt, 0);
	      \draw (mid) -- +(10pt, 0);
	      \draw (dot) to (f);
	      \draw (f) to (mid);
	    \end{tikzpicture}	
	  };
	  \node[right=1.5 of P3] (P4) {
	    \begin{tikzpicture}[inner WD]
	      \node[oshellr] (f) {$f$};
	      \node[oshelll, right=.75 of f] (g) {$f$};
	      \draw (f.west) -- +(-.5, 0);
	      \draw (f) -- (g);
	      \draw (g.east) -- +(.5,0);
	    \end{tikzpicture}	
	  };
	  \node (i) at ($(P1.east)!.5!(P2.west)$) {$=$};
	  \node (ii) at ($(P2.east)!.5!(P3.west)$) {$=$};
	  \node (iii) at ($(P3.east)!.5!(P4.west)$) {$\leq$};
	  \node[above=-.5 of ii] {\textsc{\tiny (h2)}};
	  \node[above=-.5 of iii] {\textsc{\tiny (lx)}};
	\end{tikzpicture}
\end{aligned}
  \]
  The arguments for the remaining two rows, ($\textsc{h3}\Rightarrow\textsc{a3}$) and ($\textsc{h4}\Rightarrow\textsc{a4}$), are dual.

  For the converses ($\textsc{a2}\Rightarrow\textsc{h2}$) and ($\textsc{a1}\wedge\textsc{a2}\Rightarrow\textsc{h1}$), we have 
  \[
	\begin{tikzpicture}[unoriented WD, font=\small,baseline=(P1)]
	  \node (P1) {
	\begin{tikzpicture}[inner WD]
    \node[link] (dot) {};
	  \draw (dot) to +(-10pt, 0);
	\end{tikzpicture}
	  };
	  \node[right=2 of P1] (P2) {
	    \begin{tikzpicture}[inner WD]
	      \node[oshellr] (f) {$f$};
	      \node[oshelll, right=.75 of f] (g) {$f$};
	      \node[link, right=.5 of g] (dot) {};
	      \draw (f.west) -- +(-.5, 0);
	      \draw (f) -- (g);
	      \draw (g) -- (dot);
	    \end{tikzpicture}	
	  };
	  \node[right=2 of P2] (P3) {
	    \begin{tikzpicture}[inner WD]
	      \node[oshellr] (f) {$f$};
	      \node[link, right=.5 of f] (dot) {};
	      \draw (f.west) -- +(-.5, 0);
	      \draw (f) to (dot);
	    \end{tikzpicture}	
	  };
	  \node (i) at ($(P1.east)!.5!(P2.west)$) {$\leq$};
	  \node (ii) at ($(P2.east)!.5!(P3.west)$) {$\leq$};
	  \node[above=-.5 of i] {\textsc{\tiny (a2)}};
	  \node[above=-.5 of ii] {\textsc{\tiny (lx)}};
  \end{tikzpicture}
  \]
  and
  \[
  \begin{tikzpicture}[font=\small]
  	\node (P1) {
    \begin{tikzpicture}[inner WD]
    	\node[oshellr] (f1) {$f$};
  		\node[oshellr, below=.5 of f1] (f2) {$f$};
  		\draw (f1.east) -- +(1,0);
  		\draw (f2.east) -- +(1,0);
  		\node[link] at ($(f1.west)!.5!(f2.west)+(-1,0)$) (dot) {};
  		\draw (f1.west) to[out=180, in=60] (dot);
  		\draw (f2.west) to[out=180, in=-60] (dot);
  		\draw (dot) to +(-1,0);
    \end{tikzpicture}
    };
  	\node[right=1 of P1] (P2) {
    \begin{tikzpicture}[inner WD]
    	\node[oshellr] (f1) {$f$};
  		\node[oshellr, below=.5 of f1] (f2) {$f$};
  		\node[link] at ($(f1.west)!.5!(f2.west)+(-1,0)$) (dot) {};
    	\node[oshelll, left=.5 of dot] (g) {$f$};
    	\node[oshellr, left=.5 of g] (f0) {$f$};
      \draw (f0.west) to +(-1,0);
      \draw (f0) to (g);
      \draw (g) to (dot);
  		\draw (f1.east) -- +(1,0);
  		\draw (f2.east) -- +(1,0);
  		\draw (f1.west) to[out=180, in=60] (dot);
  		\draw (f2.west) to[out=180, in=-60] (dot);
    \end{tikzpicture}
    };
  	\node[right=1 of P2] (P3) {
    \begin{tikzpicture}[inner WD]
    	\node[oshellr] (f1) {$f$};
  		\node[oshellr, below=.5 of f1] (f2) {$f$};
    	\node[oshelll, left=.5 of f1] (g1) {$f$};
    	\node[oshelll, left=.5 of f2] (g2) {$f$};
  		\node[link] at ($(g1.west)!.5!(g2.west)+(-1,0)$) (dot) {};
    	\node[oshellr, left=.5 of dot] (f0) {$f$};
      \draw (f0.west) to +(-1,0);
      \draw (f0) to (dot);
  		\draw (g1.west) to[out=180, in=60] (dot);
  		\draw (g2.west) to[out=180, in=-60] (dot);
      \draw (g1) to (f1);
      \draw (g2) to (f2);
  		\draw (f1.east) -- +(1,0);
  		\draw (f2.east) -- +(1,0);
    \end{tikzpicture}
    };
  	\node[right=1 of P3] (P4) {
    \begin{tikzpicture}[inner WD]
    	\coordinate (f1);
  		\coordinate[below=2 of f1] (f2);
  		\node[link] at ($(f1.west)!.5!(f2.west)+(-1,0)$) (dot) {};
    	\node[oshellr, left=.5 of dot] (f0) {$f$};
      \draw (f0.west) to +(-1,0);
      \draw (f0) to (dot);
  		\draw (f1.west) to[out=180, in=60] (dot);
  		\draw (f2.west) to[out=180, in=-60] (dot);
  		\draw (f1.east) -- +(1,0);
  		\draw (f2.east) -- +(1,0);
    \end{tikzpicture}
    };
	  \node (i) at ($(P1.east)!.5!(P2.west)$) {$\leq$};
	  \node (ii) at ($(P2.east)!.5!(P3.west)$) {$\leq$};
	  \node (iii) at ($(P3.east)!.5!(P4.west)$) {$\leq$};
	  \node[above=-.1 of i] {\textsc{\tiny (a2)}};
	  \node[above=-.1 of ii] {\textsc{\tiny (lx)}};
	  \node[above=-.1 of iii] {\textsc{\tiny (a1)}};
\end{tikzpicture}
\]
Again the arguments proving ($\textsc{a4}\Rightarrow\textsc{h4}$) and ($\textsc{a3}\wedge\textsc{a4}\Rightarrow\textsc{h3}$) are dual.
\end{proof}

\begin{corollary}[Characterization of left adjoints] \label{cor.leftadjs}
  Let $f\colon r \to s$ be a morphism in a prerelational po-category $\cc$. Then the following are equivalent.
  \begin{enumerate}[label=(\roman*)]
    \item $f$ is a left adjoint.
    \item $f$ is left adjoint to its transpose $f\tp \colon s \radjto r$.
    \item $f$ is deterministic and total.
    \item $f$ is a comonoid homomorphism.
  \end{enumerate}
  Similarly $f$ is a right adjoint iff it is right adjoint to its transpose, iff it is codeterminstic and cototal, iff it is a monoid homomorphism.
\end{corollary}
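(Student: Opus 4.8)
The plan is to prove the four conditions equivalent through the cycle $(\text{iv})\Rightarrow(\text{iii})\Leftrightarrow(\text{ii})\Rightarrow(\text{i})\Rightarrow(\text{iv})$, and then to obtain the dual statement about right adjoints by the mirror-image argument. Three of these links come essentially for free from what precedes; the real content is concentrated in the implication $(\text{i})\Rightarrow(\text{iv})$, which asserts that an \emph{arbitrary} left adjoint preserves the comonoid structure on the nose.

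First I would dispatch $(\text{iii})\Leftrightarrow(\text{iv})$ by quoting \cref{prop.adj_vs_hom}: a comonoid homomorphism is by definition both a comultiplication and a counit homomorphism (\textsc{h1} and \textsc{h2}), which force \textsc{a1} (deterministic) and \textsc{a2} (total); conversely \textsc{a2} yields \textsc{h2}, and \textsc{a1} together with \textsc{a2} yields \textsc{h1}, so determinism plus totality returns comonoid-homomorphy. Next, $(\text{ii})\Leftrightarrow(\text{iii})$ is just an unwinding of \cref{def.adjunction_in_a_2cat}: since $\cc$ is locally posetal the triangle identities hold automatically, so $f$ being left adjoint to $f\tp$ amounts precisely to the unit inequality $\id_r\leq f\cp f\tp$ and the counit inequality $f\tp\cp f\leq\id_s$, and these are exactly totality \textsc{(a2)} and determinism \textsc{(a1)}. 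Finally $(\text{ii})\Rightarrow(\text{i})$ is immediate, as a morphism left adjoint to its transpose is in particular a left adjoint.

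The main obstacle is $(\text{i})\Rightarrow(\text{iv})$: given only that $f$ has \emph{some} right adjoint $g\colon s\to r$, I must upgrade the automatic lax comonoid inequalities \eqref{eqn.lax_hom_prerel} to equalities, and I would do this by a mate computation. For the comultiplication, the desired reverse inequality $\delta_r\cp(f\otimes f)\leq f\cp\delta_s$ is, using the adjunction $(f\otimes f)\dashv(g\otimes g)$ (\cref{prop.ladj_smc}) and the sliding rule ``$(a\cp l)\leq b$ iff $a\leq(b\cp r)$'' recalled in the proof of \cref{lemma.lax_comons_oplax_mons}, equivalent to $\delta_r\leq f\cp\delta_s\cp(g\otimes g)$. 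The right-hand side dominates $f\cp g\cp\delta_r$, by the automatic lax comonoid inequality $g\cp\delta_r\leq\delta_s\cp(g\otimes g)$ applied to $g$, and this in turn dominates $\delta_r$ by the unit $\id_r\leq f\cp g$ of the adjunction. The counit case has the same shape but is shorter, namely $\epsilon_r\leq f\cp g\cp\epsilon_r\leq f\cp\epsilon_s$, again using the unit and the lax counit inequality $g\cp\epsilon_r\leq\epsilon_s$ for $g$. Since the opposite inequalities are part of the prerelational hypothesis, both become equalities and $f$ is a comonoid homomorphism.

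For the final sentence I would argue dually rather than repeat the computation. The monoid structure is oplax homomorphic by \cref{lemma.lax_comons_oplax_mons}, and the lower four rows of \cref{prop.adj_vs_hom} (\textsc{h3}, \textsc{h4} against \textsc{a3}, \textsc{a4}) are the exact mirror images of the upper four; running the three easy links and the mate argument with $\mu,\eta$ in place of $\delta,\epsilon$, and with ``right adjoint'' in place of ``left adjoint'', shows that $f$ is a right adjoint iff it is right adjoint to $f\tp$, iff it is codeterministic and cototal, iff it is a monoid homomorphism. Equivalently, one can observe that the transpose is a self-duality on $\cc$ carrying left adjoints to right adjoints and comonoid homomorphisms to monoid homomorphisms, and simply apply the first half of the statement to $f\tp$.
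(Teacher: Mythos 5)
Your proposal is correct and follows essentially the same route as the paper: the equivalences (ii)$\Leftrightarrow$(iii) by definition, (iii)$\Leftrightarrow$(iv) via \cref{prop.adj_vs_hom}, (ii)$\Rightarrow$(i) trivially, and the only substantive step being (i)$\Rightarrow$(iv). Where the paper disposes of that step by remarking that the proof of ($\textsc{a1}\wedge\textsc{a2}\Rightarrow\textsc{h1}\wedge\textsc{h2}$) in \cref{prop.adj_vs_hom} only used the existence of \emph{some} adjoint, you rerun the same chain of inequalities explicitly with a general right adjoint $g$ (packaged as a mate computation whose transposition step is just the counit application in disguise) --- a slightly more self-contained presentation of the identical argument.
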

\begin{proof}
  The equivalence of (ii) and (iii) is by definition, and that of (ii) and (iv) is immediate from \cref{prop.adj_vs_hom}; also note that (ii) evidently implies (i). To see that (i) implies (iv), note that in the proof the homomorphism properties ($\textsc{a1}\wedge\textsc{a2}\Rightarrow\textsc{h1}\wedge\textsc{h2}$) in \cref{prop.adj_vs_hom}, only the existence of an `adjoint' was necessary; it did not matter that the morphism with the adjointness property was the transpose.
\end{proof}

\begin{notation}[Left adjoints]
  From now on, we shall write denote 
    $
    \begin{tikzpicture}[inner WD] 
      \node[oshellr] (f) {$f$};
      \draw (f.west) -- +(-.5,0) node[left] {$r$};
      \draw (f.east) -- +(.5,0)  node[right] {$s$};
    \end{tikzpicture}
  $
by
  $
    \begin{tikzpicture}[inner WD] 
      \node[funcr] (f) {$f$};
      \draw (f.west) -- +(-.5,0) node[left] {$r$};
      \draw (f.east) -- +(.5,0) node[right] {$s$};
    \end{tikzpicture}
  $
  when $f\colon r\ladjto s$ is known to be a left adjoint. By \cref{cor.leftadjs} we know its transpose $f\tp\colon s\radjto r$ is its right adjoint, and we denote it by
  $
    \begin{tikzpicture}[inner WD] 
      \node[funcl] (f) {$f$};
      \draw (f.west) -- +(-.5,0) node[left] {$s$};
      \draw (f.east) -- +(.5,0) node[right] {$r$};
    \end{tikzpicture}
  $.
\end{notation}

\begin{proposition}[Uniqueness of left adjoint comonoids]\label{prop.prerel_comon_unique}
  Let $c\in\cc$ be an object in a prerelational po-category. If $\delta\colon c \ladjto c \otimes c$ and $\epsilon\colon c \ladjto I$ are left adjoints such that $(c,\delta,\epsilon)$ is a comonoid, then $\epsilon = \epsilon_c$ and $\delta = \delta_c$.
\end{proposition}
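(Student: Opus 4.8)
The plan is to exploit \cref{cor.leftadjs}, which says that a morphism in a prerelational po-category is a left adjoint if and only if it is a comonoid homomorphism with respect to the supplied comonoid $(\delta_c,\epsilon_c)$. This turns the hypotheses ``$\delta$ and $\epsilon$ are left adjoints'' into the homomorphism equations \textsc{(h1)} and \textsc{(h2)} of \cref{prop.adj_vs_hom}, after which I can collapse everything using the counit laws of the two comonoid structures in play.

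First I would dispatch the counit. Since $\epsilon\colon c\ladjto I$ is a left adjoint, \cref{cor.leftadjs} makes it a comonoid homomorphism, so it satisfies the counit--homomorphism equation \textsc{(h2)}, namely $\epsilon\cp\epsilon_I=\epsilon_c$. The supplied counit on the monoidal unit is $\epsilon_I=\id_I$, forced by the second square of \cref{eqn.supply_commute_tensors} together with the unitor coherence. Hence $\epsilon=\epsilon_c$ at once.

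The substance is the comultiplication. Again by \cref{cor.leftadjs}, $\delta\colon c\ladjto c\otimes c$ is a comonoid homomorphism, so the comultiplication--homomorphism equation \textsc{(h1)} holds:
\[
  \delta\cp\delta_{c\otimes c}=\delta_c\cp(\delta\otimes\delta).
\]
Here $\delta_{c\otimes c}\colon c\otimes c\to(c\otimes c)\tpow{2}$ is the supplied comultiplication on the product object, which by the supply--compatibility square of \cref{eqn.supply_commute_tensors} is $(\delta_c\otimes\delta_c)$ postcomposed with the symmetry interleaving the four factors; concretely it duplicates the pair of inputs as $(x,y)\mapsto(x,y,x,y)$. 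I would then postcompose both sides of \textsc{(h1)} with $\id_c\otimes\epsilon_c\otimes\epsilon_c\otimes\id_c\colon(c\otimes c)\tpow{2}\to c\otimes c$, i.e.\ delete the two inner wires. On the left, these two deletions remove one copy of each original input and so straighten $\delta_{c\otimes c}\cp(\id_c\otimes\epsilon_c\otimes\epsilon_c\otimes\id_c)$ to $\id_{c\otimes c}$ via the counit laws of the \emph{supplied} comonoid $(\delta_c,\epsilon_c)$ from \cref{eqn.equations_wires}, leaving $\delta$. On the right, the two deletions hit one leg of each $\delta$-factor, and here I use the counit laws of the \emph{new} comonoid $(\delta,\epsilon)$---legitimate since we have already shown $\epsilon=\epsilon_c$---so that $(\delta\otimes\delta)\cp(\id_c\otimes\epsilon_c\otimes\epsilon_c\otimes\id_c)=\id_{c\otimes c}$, leaving $\delta_c$. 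Equating the two reduced sides yields $\delta=\delta_c$.

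The only delicate point is the wire bookkeeping: one must track how the four outputs of $\delta_{c\otimes c}$ are indexed after the interleaving symmetry, so that the deleted leg of each copy is the correct one and each reduction is genuinely an instance of a counit law (a left--counit on one factor and a right--counit on the other). I expect this to be the sole obstacle, and it is most transparent when drawn in the graphical calculus of \cref{rem.frob_means_freedom}, where both reductions are simply the observation that capping off a duplicated wire with $\epsilon_c$ straightens it.
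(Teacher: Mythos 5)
Your proof is correct and follows essentially the same route as the paper's: both establish $\epsilon=\epsilon_c$ from \textsc{h2} together with $\epsilon_I=\id_I$, and both obtain $\delta=\delta_c$ by combining the \textsc{h1} equation $\delta\cp\delta_{c\otimes c}=\delta_c\cp(\delta\otimes\delta)$ with the counit laws of the supplied comonoid on one side and of the new comonoid (using $\epsilon=\epsilon_c$) on the other. The paper merely writes this as a single graphical chain starting from $\delta$, whereas you postcompose both sides of \textsc{h1} with the deletion map $\id_c\otimes\epsilon_c\otimes\epsilon_c\otimes\id_c$; the wire bookkeeping you flag is handled correctly.
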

\begin{proof}
  By the definition of supply, the counit $\epsilon_I$ supplied to the monoidal unit is simply the identity $\id_I$; see \cref{eqn.supply_commute_tensors}. Since left adjoints are comonoid homomorphisms by \cref{cor.leftadjs}, we have that $\epsilon = \epsilon \cp \id_I = \epsilon\cp\epsilon_I=\epsilon_c$ by \textsc{h2}. This implies the last equality in the chain:
  \[
  \begin{tikzpicture}[font=\small, baseline=(P1)]
  	\node (P1) {
    \begin{tikzpicture}[inner WD, shorten <=-1pt]
  		\node[funcr] (d) {$\delta$};
      \draw (d.180) -- +(-1,0);
  		\draw (d.25) -- +(1,0);
  		\draw (d.-25) -- +(1,0);
    \end{tikzpicture}
    };
  	\node[right=1 of P1] (P2) {
    \begin{tikzpicture}[inner WD, shorten <=-1pt]
  		\node[funcr] (d) {$\delta$};
      \draw (d.180) -- +(-1,0);
  		\draw (d.25) -- +(1,0) node[link] (dot1) {};
  		\draw (d.-25) -- +(1,0) node[link] (dot2) {};
      \draw (dot1) to[out=60, in=180] +(1,1) -- +(1,0);
      \draw (dot1) to[out=-60, in=180] +(1,-1) -- +(.1,0) node[link] {};
      \draw (dot2) to[out=60, in=180] +(1,1) -- +(.1,0) node[link] {};
      \draw (dot2) to[out=-60, in=180] +(1,-1) -- +(1,0);
    \end{tikzpicture}
    };
  	\node[right=1 of P2] (P3) {
    \begin{tikzpicture}[inner WD, shorten <=-1pt]
    	\node[funcr] (f1) {$\delta$};
  		\node[funcr, below=.5 of f1] (f2) {$\delta$};
  		\draw (f1.25) -- +(2,0);
  		\draw (f1.-25) -- +(1,0) node[link] {};
  		\draw (f2.25) -- +(1,0) node[link] {};
  		\draw (f2.-25) -- +(2,0);
  		\node[link] at ($(f1.west)!.5!(f2.west)+(-1,0)$) (dot) {};
  		\draw (f1.west) to[out=180, in=60] (dot);
  		\draw (f2.west) to[out=180, in=-60] (dot);
  		\draw (dot) to +(-1,0);
    \end{tikzpicture}
    };
  	\node[right=1 of P3] (P4) {
    \begin{tikzpicture}[inner WD]
    	\coordinate (f1);
  		\coordinate[below=2 of f1] (f2);
  		\node[link] at ($(f1.west)!.5!(f2.west)+(-1,0)$) (dot) {};
      \draw (dot) -- +(-1,0);
  		\draw (f1.west) to[out=180, in=60] (dot);
  		\draw (f2.west) to[out=180, in=-60] (dot);
  		\draw (f1.east) -- +(1,0);
  		\draw (f2.east) -- +(1,0);
    \end{tikzpicture}
    };
	  \node (i) at ($(P1.east)!.5!(P2.west)$) {$=$};
	  \node (ii) at ($(P2.east)!.5!(P3.west)$) {$=$};
	  \node (iii) at ($(P3.east)!.5!(P4.west)$) {$=$};
	  \node[above=-.1 of ii] {\textsc{\tiny (H1)}};
\end{tikzpicture}
\qedhere
\]
\end{proof}

\begin{proposition}[Left adjoints are discretely ordered] \label{prop.ladj_discrete}
  Suppose we have left adjoints $f$, $g$ in a prerelational po-category such that $f \le g$. Then $f = g$.
\end{proposition}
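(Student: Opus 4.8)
The plan is to turn the single inequality $f\le g$ into the reverse inequality $g\le f$ by exploiting that both morphisms, being left adjoints, come with adjunction data, and that the transpose operation $(-)\tp$ is monotone. First I would record the raw ingredients. Since $f$ and $g$ are left adjoints, \cref{cor.leftadjs} tells us that each is left adjoint to its own transpose; in particular $f$ is total and $g$ is deterministic. Reading these off the table in \cref{prop.adj_vs_hom}, totality of $f$ is the unit inequality $\id_r\le f\cp f\tp$, and determinism of $g$ is the counit inequality $g\tp\cp g\le\id_s$ (here $f,g\colon r\to s$ and $f\tp,g\tp\colon s\to r$). The other thing I need is that the transpose is order-preserving: since $f\tp$ is constructed from $f$ together with the fixed cup and cap via composition and monoidal product (see \cref{rem.frob_means_freedom}), and both $\cp$ and $\otimes$ are monotone in a monoidal po-category, the hypothesis $f\le g$ immediately yields $f\tp\le g\tp$.

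With these in hand the proof is a short zig-zag. Starting from $g=\id_r\cp g$ and inserting the unit of $f\dashv f\tp$, monotonicity of composition gives
\[
  g=\id_r\cp g\;\le\;(f\cp f\tp)\cp g\;=\;f\cp(f\tp\cp g)\;\le\;f\cp(g\tp\cp g)\;\le\;f\cp\id_s\;=\;f,
\]
where the second inequality uses $f\tp\le g\tp$ (post-composed with $g$) and the third uses the counit $g\tp\cp g\le\id_s$. Thus $g\le f$, and combined with the hypothesis $f\le g$ and antisymmetry of the order on the hom-poset we conclude $f=g$.

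The only genuinely delicate point is the monotonicity of the transpose, so I would make sure that claim is justified cleanly: it rests entirely on the compact-closed (hypergraph) structure established in \cref{rem.frob_means_freedom}, namely that $(-)\tp$ is assembled from fixed structure maps by monotone operations. Everything else is bookkeeping: correctly matching domains and codomains, and citing \cref{cor.leftadjs} to guarantee that the relevant unit and counit inequalities are exactly totality of $f$ and determinism of $g$. No reference to the full adjunction triangle identities is needed, which is what makes the argument work smoothly in the locally-posetal setting.
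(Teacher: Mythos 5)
Your proof is correct and is essentially the paper's own argument: first deduce $f\tp\le g\tp$ from monotonicity of composition with the (fixed) cup and cap, then sandwich $g\le f\cp f\tp\cp g\le f\cp g\tp\cp g\le f$ using the unit of $f\dashv f\tp$, the transposed inequality, and the counit of $g\dashv g\tp$, and finish by antisymmetry. Your explicit identification of which adjunction inequality justifies each step (totality of $f$ for the unit, determinism of $g$ for the counit, via \cref{cor.leftadjs} and \cref{prop.adj_vs_hom}) is a welcome elaboration of what the paper compresses into ``by adjointness.''
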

\begin{proof}
  Note that $f \le g$ immediately implies $f\tp \le g\tp$, because composition---e.g.\ composing with caps and cups---is monotonic. Then by adjointness we have that
  \[
  \begin{tikzpicture}[unoriented WD, font=\small]
    \node (P1) {
      \begin{tikzpicture}[inner WD]
        \node[funcr] (f) {$g$};
        \draw (f) -- +(-10pt, 0);
        \draw (f) -- +(10pt,0);
      \end{tikzpicture}
    };
    \node[right=2 of P1] (P2) {
      \begin{tikzpicture}[inner WD]
        \node[funcr] (f) {$g$};
        \node[funcl, right=.75 of f] (g) {$f$};
        \node[funcr, right=.75 of g] (h) {$f$};
        \draw (f) -- +(-10pt, 0);
        \draw (f) -- (g);
        \draw (g) -- (h);
        \draw (h) -- +(10pt,0);
      \end{tikzpicture}
    };
    \node[right=2 of P2] (P3) {
      \begin{tikzpicture}[inner WD]
        \node[funcr] (f) {$g$};
        \node[funcl, right=.75 of f] (g) {$g$};
        \node[funcr, right=.75 of g] (h) {$f$};
        \draw (f) -- +(-10pt, 0);
        \draw (f) -- (g);
        \draw (g) -- (h);
        \draw (h) -- +(10pt,0);
      \end{tikzpicture}
    };
    \node[right=2 of P3] (P4) {
      \begin{tikzpicture}[inner WD]
        \node[funcr] (f) {$f$};
        \draw (f) -- +(-10pt, 0);
        \draw (f) -- +(10pt,0);
      \end{tikzpicture}
    };
    \node at ($(P1.east)!.5!(P2.west)$) {$\le$};
    \node at ($(P2.east)!.5!(P3.west)$) {$\le$};
    \node at ($(P3.east)!.5!(P4.west)$) {$\le$};
  \end{tikzpicture}.
  \qedhere
  \]
\end{proof}

\cref{lem.lrl_is_l}, which we will use often, has a nearly identical proof.

\begin{lemma}\label{lem.lrl_is_l}
Suppose $f$ is a left adjoint in a prerelational po-category. Then
\begin{equation}\label{eqn.lrl_is_l}
  \begin{tikzpicture}[unoriented WD, font=\small]
    \node (P1) {
      \begin{tikzpicture}[inner WD]
        \node[funcr] (f) {$f$};
        \node[funcl, right=.65 of f] (g) {$f$};
        \node[funcr, right=.75 of g] (h) {$f$};
        \draw (f) -- +(-10pt, 0);
        \draw (f) -- (g);
        \draw (g) -- (h);
        \draw (h) -- +(10pt,0);
      \end{tikzpicture}
    };
    \node[right=2 of P1] (P2) {
      \begin{tikzpicture}[inner WD]
        \node[funcr] (f) {$f$};
        \draw (f) -- +(-10pt, 0);
        \draw (f) -- +(10pt,0);
      \end{tikzpicture}
    };
    \node at ($(P1.east)!.5!(P2.west)$) {$=$};
  \end{tikzpicture}
\end{equation}
\end{lemma}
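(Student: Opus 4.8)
The plan is to recognize \cref{eqn.lrl_is_l} as one of the two triangle identities for the adjunction $f\dashv f\tp$, and to prove it by the same sandwiching argument used in \cref{prop.ladj_discrete}. First I would invoke \cref{cor.leftadjs}: since $f\colon r\to s$ is a left adjoint, its transpose $f\tp\colon s\to r$ is its right adjoint. This is exactly what lets us name the morphism appearing in the middle of the left-hand diagram, so that it reads $f\cp f\tp\cp f$. Writing the unit and counit of this adjunction as inequalities in the po-category (cf.\ \cref{eqn.show_adjoints}), we then have $\id_r\le f\cp f\tp$ and $f\tp\cp f\le\id_s$.

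From here I would establish the two inequalities separately and conclude by antisymmetry of the order on $\cc(r,s)$. For the lower bound, whiskering the unit on the right by $f$ and using monotonicity of composition gives $f=\id_r\cp f\le(f\cp f\tp)\cp f$. For the upper bound, whiskering the counit on the left by $f$ gives $f\cp(f\tp\cp f)\le f\cp\id_s=f$. Combining, $f\le f\cp f\tp\cp f\le f$, whence the two sides are equal. Graphically, these two steps are precisely the unit and counit inequalities of \cref{eqn.show_adjoints} applied to the cap and cup in the middle of the diagram in \cref{eqn.lrl_is_l}, straightening the zig-zag into a single wire.

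There is essentially no obstacle here. The only subtlety, already dispatched by \cref{cor.leftadjs}, is the identification of the right adjoint of $f$ with its transpose $f\tp$; note also that although the remark opening \cref{sec.leftadjs} records that the triangle equations hold automatically at the level of $2$-morphisms, the \emph{equality of $1$-morphisms} asserted here is obtained from the two whiskered inequalities via antisymmetry of the local order. Once the adjunction is in hand, the result is immediate from monotonicity of composition, exactly mirroring the chain of inequalities displayed in the proof of \cref{prop.ladj_discrete}.
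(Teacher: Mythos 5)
Your proof is correct and matches the paper's intent exactly: the paper gives no separate argument for this lemma, remarking only that it has a ``nearly identical proof'' to \cref{prop.ladj_discrete}, namely the sandwich $f\le f\cp f\tp\cp f\le f$ obtained by whiskering the unit and counit and concluding by antisymmetry of the hom-poset order. Your appeal to \cref{cor.leftadjs} to identify the middle box with the right adjoint $f\tp$ is the right (and non-circular) way to read the diagram.
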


The following lemma, \cref{lemma.square_BC_transform}, states that a square of left adjoints
\[
  \begin{tikzcd}
    a \ar[r,"y"] \ar[d,"x"'] &  b \ar[d,"g"] \\
    c \ar[r,"f"'] & d
  \end{tikzcd}
\]
commutes if and only if the composite relation $c \to b$ via $a$ is less than that via $d$.

\begin{lemma}\label{lemma.square_BC_transform}
  In a prerelational po-category, we have
  \[
	\begin{tikzpicture}[unoriented WD, font=\small,baseline=(P1)]
	  \node (P1) {
	    \begin{tikzpicture}[inner WD]
	      \node[funcr] (f) {$x$};
	      \node[funcr, right=.75 of f] (g) {$f$};
	      \draw (f) -- +(-15pt, 0);
	      \draw (f) -- (g);
	      \draw (g) -- +(15pt,0);
	    \end{tikzpicture}	
	  };
	  \node[right=2 of P1] (P2) {
	    \begin{tikzpicture}[inner WD]
	      \node[funcr] (f) {$y$};
	      \node[funcr, right=.75 of f] (g) {$g$};
	      \draw (f) -- +(-15pt, 0);
	      \draw (f) -- (g);
	      \draw (g) -- +(15pt,0);
	    \end{tikzpicture}	
	  };
	  \node (i) at ($(P1.east)!.5!(P2.west)$) {$=$};
  \end{tikzpicture}    
  \qquad
  \mbox{iff}
  \qquad
	\begin{tikzpicture}[unoriented WD, font=\small,baseline=(P1)]
	  \node (P1) {
	    \begin{tikzpicture}[inner WD]
	      \node[funcl] (f) {$x$};
	      \node[funcr, right=.75 of f] (g) {$y$};
	      \draw (f) -- +(-15pt, 0);
	      \draw (f) -- (g);
	      \draw (g) -- +(15pt,0);
	    \end{tikzpicture}	
	  };
	  \node[right=2 of P1] (P2) {
	    \begin{tikzpicture}[inner WD]
	      \node[funcr] (f) {$f$};
	      \node[funcl, right=.75 of f] (g) {$g$};
	      \draw (f) -- +(-15pt, 0);
	      \draw (f) -- (g);
	      \draw (g) -- +(15pt,0);
	    \end{tikzpicture}	
	  };
	  \node (i) at ($(P1.east)!.5!(P2.west)$) {$\leq$};
  \end{tikzpicture}    
  \]
\end{lemma}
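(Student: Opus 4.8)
The plan is to treat the two directions separately, reducing each to a single counit inequality by means of the two adjunction ``sliding'' equivalences recorded in the proof of \cref{lemma.lax_comons_oplax_mons}: if $l$ is left adjoint to $r$, then $a \le l \cp b \Leftrightarrow r \cp a \le b$ and $a \cp l \le b \Leftrightarrow a \le b \cp r$. Since $x,y,f,g$ are all left adjoints, by \cref{cor.leftadjs} the transpose of each is its right adjoint; the relevant counits are thus $x\tp \cp x \le \id$ and $f\tp \cp f \le \id$.

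For the forward direction, assume $x \cp f = y \cp g$. Sliding $g \dashv g\tp$ across (the second rule), the goal $x\tp \cp y \le f \cp g\tp$ is equivalent to $x\tp \cp y \cp g \le f$. I would then rewrite $x\tp \cp y \cp g = x\tp \cp (y \cp g) = x\tp \cp (x \cp f) = (x\tp \cp x) \cp f$ using the hypothesis, and conclude with the counit $x\tp \cp x \le \id$.

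For the backward direction, assume $x\tp \cp y \le f \cp g\tp$ and apply transposition, which is monotone (as noted in the proof of \cref{prop.ladj_discrete}) and reverses the order of composition (a property of the self-dual compact-closed structure of \cref{rem.frob_means_freedom}); this yields $y\tp \cp x \le g \cp f\tp$. Sliding $y \dashv y\tp$ across (the first rule), the inequality $x \cp f \le y \cp g$ is equivalent to $y\tp \cp x \cp f \le g$, and indeed $y\tp \cp x \cp f \le (g \cp f\tp) \cp f = g \cp (f\tp \cp f) \le g$ by the transposed hypothesis followed by the counit $f\tp \cp f \le \id$. Since $x \cp f$ and $y \cp g$ are composites of left adjoints, hence left adjoints themselves (\cref{prop.ladj_smc}), this single inequality already forces $x \cp f = y \cp g$ by \cref{prop.ladj_discrete}, left adjoints being discretely ordered.

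I do not anticipate a genuine obstacle: the entire content lies in choosing, at each step, which adjunction to slide across so that the hypothesis and exactly one counit discharge the goal. The only point demanding care is type-bookkeeping---checking that $x\tp \cp y$ and $f \cp g\tp$ are parallel morphisms $c \to b$, that $y\tp \cp x$ and $g \cp f\tp$ are parallel $b \to c$ after transposing, and that each sliding rule and each counit is applied to a composite of the matching (co)domain.
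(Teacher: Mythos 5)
Your proof is correct and follows essentially the same route as the paper's: both directions reduce to inserting a unit/counit of one of the adjunctions, applying the (transposed) hypothesis, and concluding with the discreteness of left adjoints (\cref{prop.ladj_discrete}). Packaging the unit/counit insertions as adjunction-sliding equivalences rather than writing the paper's direct chain of inequalities is only a cosmetic difference.
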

\begin{proof}
  Suppose $x\cp f= y \cp g$. Then $x\tp \cp y \le f \cp f\tp \cp x\tp \cp y = f \cp g\tp \cp y\tp \cp y \le f \cp g\tp$.
  Conversely, suppose $x\tp \cp y \le f \cp g\tp$. Then $x \cp f \le y \cp y\tp \cp x \cp f \le y \cp g \cp f\tp \cp f \le y \cp g$.
  Since both $x\cp f$ and $y \cp g$ are left adjoints, they are equal by \cref{prop.ladj_discrete}.
\end{proof}

\section{Epis and monos in a prerelational po-category} \label{sec.epis_monos}

Morphisms in a regular category have image factorizations. In this section we explore epis and monos in prerelational categories.

\begin{definition}[Epis and monos in a po-category]\label{def.epis_monos_2_cat}
We say that a morphism $m\colon r\to s$ in a po-category $\cc$ is a  \emph{monomorphism} iff it is a monomorphism in the underlying 1-category, i.e.\ if the function $(- \cp m)\colon\cc(q,r)\to\cc(q,s)$ is injective (the \emph{cancellation property} holds). Similary, we say that $m$ is a \emph{split monomorphism} in $\cc$ if there exists $n$ such that $m \cp n = \id$. We define (split) epimorphisms in $\cc$ dually.
\end{definition}

\begin{proposition}\label{prop.characterize_monos}
  Let $m\colon r \ladjto s$ be a left adjoint in a prerelational po-category $\cc$. Then the following are equivalent.
  \begin{enumerate}[label=(\roman*)]
    \item $m$ is a monomorphism in $\cc$.
    \item $m$ is a split monomorphism in $\cc$.
    \item the unit of the $m, m\tp$ adjunction is the identity: 
    $
  \begin{tikzpicture}[unoriented WD, font=\small, baseline=(P2)]
    \node (P1) {
      \begin{tikzpicture}[inner WD]
        \draw (-10pt,0) -- (10pt,0);
      \end{tikzpicture}
    };
    \node[right=2 of P1] (P2) {
      \begin{tikzpicture}[inner WD]
        \node[funcr] (f) {$m$};
        \node[funcl, right=.75 of f] (g) {$m$};
        \draw (f) -- +(-10pt, 0);
        \draw (f) -- (g);
        \draw (g) -- +(10pt,0);
      \end{tikzpicture}
    };
    \node at ($(P1.east)!.5!(P2.west)$) {$=$};
  \end{tikzpicture}
  $.
    \item $m$ is co-deterministic.
  \end{enumerate}
  Dually, a left adjoint $e$ is epi in $\cc$ iff it is split epi in $\cc$ iff the unit of the $e,e\tp$ adjunction is the identity iff it is co-total.
\end{proposition}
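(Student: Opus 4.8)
The plan is to close the cycle (iii) $\Rightarrow$ (ii) $\Rightarrow$ (i) $\Rightarrow$ (iii) and to check (iii) $\Leftrightarrow$ (iv) directly; since all of (i)--(iv) then lie in a single equivalence class, the proposition follows. The workhorse throughout is \cref{cor.leftadjs}: because $m$ is a left adjoint it is left adjoint to its transpose $m\tp$, so I may freely use the unit inequality $\id_r\le m\cp m\tp$ and the counit inequality $m\tp\cp m\le\id_s$.

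First I would dispatch the formal implications. Condition (iii) is literally the equation $m\cp m\tp=\id_r$, which exhibits $m\tp$ as a right inverse of $m$ and hence gives (ii); and a split monomorphism is a monomorphism by the standard one-line cancellation argument, which is purely $1$-categorical and uses neither the order nor the monoidal structure. For (iii) $\Leftrightarrow$ (iv): co-determinism is the inequality $m\cp m\tp\le\id_r$, and since the unit always supplies $\id_r\le m\cp m\tp$, local posetality of $\cc$ merges the two into the equality $m\cp m\tp=\id_r$ of (iii); the reverse implication is trivial.

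The one step with real content is (i) $\Rightarrow$ (iii), and this is where I would concentrate. The idea is to show that $m\cp m\tp$ and $\id_r$ agree after post-composition with $m$, and then cancel. Using the counit together with monotonicity of composition, $(m\cp m\tp)\cp m=m\cp(m\tp\cp m)\le m\cp\id_s=m$; using the unit, $m=\id_r\cp m\le(m\cp m\tp)\cp m$. Hence $(m\cp m\tp)\cp m=\id_r\cp m$ in $\cc(r,s)$, and because $m$ is a monomorphism the map $(-\cp m)\colon\cc(r,r)\to\cc(r,s)$ is injective, forcing $m\cp m\tp=\id_r$, which is (iii). I expect this sandwich-and-cancel argument to be the main (indeed the only) obstacle; everything else is bookkeeping.

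Finally, the dual statement for a left adjoint $e\colon r\ladjto s$ is proved by the mirror-image argument, cancelling on the opposite side. There the distinguished identity condition is that the counit $e\tp\cp e\le\id_s$ is an equality $e\tp\cp e=\id_s$, equivalently that $e$ is co-total (co-totality being the reverse inequality $\id_s\le e\tp\cp e$). The section witnessing split epicness is again the transpose $e\tp$, and the crucial implication from epicness runs identically: one shows $e\cp(e\tp\cp e)=e=e\cp\id_s$ using the unit and counit, and then cancels $e$ on the left via injectivity of $(e\cp-)$.
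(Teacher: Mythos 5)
Your proposal is correct and takes essentially the same route as the paper: the paper closes the cycle (iii)$\Rightarrow$(ii)$\Rightarrow$(i) trivially, obtains (i)$\Rightarrow$(iii) by applying the cancellation property to the identity $m\cp m\tp\cp m=m$ of \cref{lem.lrl_is_l} (whose sandwich proof you have simply inlined), and deduces (iv)$\Leftrightarrow$(iii) from the fact that a left adjoint is total. The only cosmetic difference is that you re-derive that lemma rather than cite it, and you correctly read the dual condition as the counit $e\tp\cp e=\id$ being an identity.
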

\begin{proof}
  (iii) implies (ii) implies (i) trivially, while (i) implies (iii) by applying the cancellation property to \cref{eqn.lrl_is_l}. (iii) implies (iv) trivially, and (iv) implies (iii) because $m$ is a left adjoint, hence total; see \cref{eqn.show_adjoints}.
\end{proof}


\begin{corollary} \label{prop.extremal_epis}
  Let $e$ be a left adjoint in a prerelational po-category $\cc$. If $e$ is epi in $\cc$, then it is extremal epi in $\ladj(\cc)$.
\end{corollary}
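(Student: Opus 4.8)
The plan is to unwind the definition of extremal epimorphism and reduce the whole claim to one property of the ``mono part''. So suppose we are given a factorization $e = f \cp m$ in $\ladj(\cc)$, where $m\colon t \to s$ is a monomorphism in $\ladj(\cc)$ (and $f\colon r \to t$, $m$ are both left adjoints, since they live in $\ladj(\cc)$); I must show that $m$ is an isomorphism.

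First I would harvest the consequences of $e$ being epi. Since $e = f\cp m$ is epi in $\cc$, its last factor $m$ is also epi in $\cc$ (if a composite is epi then its last factor is epi, by the usual cancellation argument pushed through $f$). By the epi-half of \cref{prop.characterize_monos}, a left adjoint that is epi in $\cc$ is co-total; combined with the determinism that every left adjoint enjoys (\cref{cor.leftadjs}), this yields $m\tp \cp m = \id_s$. The only thing I still need about $m$ is \emph{co-determinism}, $m \cp m\tp \le \id_t$: for then totality of the left adjoint $m$ upgrades this to $m \cp m\tp = \id_t$, so that $m\tp$ is a two-sided inverse of $m$ and $m$ is an isomorphism. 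With every monic factor of $e$ invertible, $e$ is extremal epi in $\ladj(\cc)$.

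Thus everything reduces to showing that a left adjoint which is monic \emph{in $\ladj(\cc)$} is co-deterministic, and this is the main obstacle. It is exactly the place where one must feed the monomorphism hypothesis into the wiring structure rather than argue by naive cancellation: the tempting move of cancelling $m$ from $(m\cp m\tp)\cp m = \id_t \cp m$, which holds by \cref{lem.lrl_is_l}, is illegitimate inside $\ladj(\cc)$, because $m\cp m\tp$ need not be a left adjoint (so the cancellation underlying the mono-version of \cref{prop.characterize_monos} does not apply directly). Instead I would translate the monomorphism condition through \cref{lemma.square_BC_transform}: applied to the square whose two horizontal edges are both $m$, it says that for left adjoints $p,q\colon a \to t$ one has $p\cp m = q \cp m$ iff $p\tp \cp q \le m\cp m\tp$, so that $m$ being monic in $\ladj(\cc)$ is equivalent to the implication ``$p\tp\cp q \le m\cp m\tp \Rightarrow p = q$''. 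Feeding this into \cref{prop.ladj_discrete} (left adjoints are discretely ordered) isolates $\id_t$ as the only left adjoint dominated by the idempotent $m\cp m\tp$, and the comonoid-homomorphism calculus for the left adjoint $m$ should then force $m\cp m\tp \le \id_t$.

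The hard part is genuinely this co-determinism step: co-totality of $m$ and monicness in $\ladj(\cc)$ each give something, but bridging from cancellation-against-left-adjoints to the relational inequality $m\cp m\tp \le \id_t$ is where the real work lies, and it is the essential use of the supply of wirings. I expect that in a relational (tabular) setting it is immediate, since a tabulation of $m\cp m\tp$ exhibits it as $p\tp\cp q$ and the square-lemma implication collapses $p=q$; prerelationally one must instead extract the inequality from \cref{lemma.square_BC_transform} together with \cref{prop.ladj_discrete} as above, which is the delicate point of the argument.
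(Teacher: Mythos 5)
Your opening moves are correct, and your route to one half of the invertibility is clean: cancelling $f$ shows that $m$ is epi in $\cc$, so the epi half of \cref{prop.characterize_monos} together with the determinism of left adjoints gives $m\tp\cp m=\id_s$ directly. (The paper reaches the same conclusion, property \textsc{a4}, by a longer diagram chase showing $\eta_t\cp m=\eta_s$; your cancellation argument is arguably simpler.) The genuine gap is the other half, $m\cp m\tp\le\id_t$, which you explicitly leave open. The bridge you gesture at does not close it: \cref{lemma.square_BC_transform} plus \cref{prop.ladj_discrete} only tell you that $\id_t$ is the unique left adjoint lying \emph{below} $m\cp m\tp$, and that gives no purchase on whether $m\cp m\tp$ itself lies below $\id_t$ unless you can present $m\cp m\tp$ in the form $x\tp\cp y$ with $x,y$ left adjoints---which is a tabulation, and hence unavailable in a merely prerelational po-category. ``The comonoid-homomorphism calculus should then force the inequality'' is a placeholder, not an argument.

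The paper never attempts this bridge: it applies \cref{prop.characterize_monos} directly to the monomorphism $m$, so co-determinism (\textsc{a3}) is immediate, and \textsc{a1}--\textsc{a4} together make $m\tp$ a two-sided inverse. That is, the step you isolate as ``the delicate point'' is exactly the step the paper dispatches in one line by feeding the monicity of $m$ straight into the hypothesis of \cref{prop.characterize_monos}, i.e.\ by reading it as monicity in $\cc$. Your observation that monicity in $\ladj(\cc)$ could a priori be weaker than monicity in $\cc$ is a fair comment on the statement itself, but it leaves your proof finished under neither reading: under the paper's reading the missing step is trivial and you did not take it, while under your stricter reading the missing step is the entire content of the corollary and you did not supply it. You should either invoke \cref{prop.characterize_monos}(i)$\Rightarrow$(iv) to obtain \textsc{a3} outright, or---if you insist on monos in $\ladj(\cc)$---restrict to the relational case and run the tabulation argument you sketch in your last sentence.
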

\begin{proof}
  Suppose that $e$ is epi and $f$ and $m$ are left adjoints such that $m$ is a monomorphism and $e = f \cp m$. By definition of extremal epi, we need to show that $m$ is an iso. It already satisfies \textsc{a1}, \textsc{a2}, and \textsc{a3}; we prove it satisfies \textsc{a4} using \cref{prop.characterize_monos}:
  \[
  \raisebox{-.2cm}{
  \begin{tikzpicture}
  	\node (p1) {
		\begin{tikzpicture}[inner WD]
			\node[funcr] (m) {$m$};
			\node[link, left=.5 of m] (link) {};
			\draw (m.west) -- (link);
			\draw (m) -- +(10pt,0);
		\end{tikzpicture}
		};
		\node (p2) [right=.5 of p1] {
		\begin{tikzpicture}[inner WD]
			\node[funcr] (m) {$m$};
			\node[link, left=.5 of m] (link) {};
			\draw (m.west) -- (link);
			\node[funcl, right=.75 of m] (L) {$e$};
			\draw (m.east) -- (L.west);
			\node[funcr, right=.75 of L] (e) {$e$};
			\draw (e) -- +(10pt,0);
			\draw (L.east) -- (e.west);
		\end{tikzpicture}		
		};
		\node (p3) [right=.5 of p2] {
		\begin{tikzpicture}[inner WD]
			\node[funcr] (m) {$m$};
			\node[link, left=.5 of m] (link) {};
			\draw (m.west) -- (link);
			\node[funcl, right=.75 of m] (L) {$m$};
			\draw (m.east) -- (L.west);
			\node[funcl, right=.75 of L] (f) {$f$};
			\node[funcr, right=.75 of f] (e) {$e$};
			\draw (L.east) -- (f.west);
			\draw (f.east) -- (e.west);
			\draw (e) -- +(10pt,0);
		\end{tikzpicture}		
		};
		\node (p4) [right=.5 of p3] {
		\begin{tikzpicture}[inner WD]
			\node[funcl] (m) {$f$};
			\node[link, left=.5 of m] (link) {};
			\draw (m.west) -- (link);
			\node[funcr, right=.75 of m] (L) {$e$};
			\draw (m.east) -- (L.west);
			\draw (L) -- +(10pt,0);
		\end{tikzpicture}		
		};
		\node (p5) [right=.5 of p4] {
		\begin{tikzpicture}[inner WD]
			\node[funcr] (m) {$e$};
			\node[link, left=.5 of m] (link) {};
			\draw (m.west) -- (link);
			\draw (m) -- +(10pt,0);
		\end{tikzpicture}		
		};
		\node (p6) [right=.5 of p5] {
		\begin{tikzpicture}[inner WD]
			\node[link] (link) {};
			\draw (link) -- +(10pt,0);
		\end{tikzpicture}				
		};
		\node at ($(p1.east)!.5!(p2.west)$) {$=$};
		\node at ($(p2.east)!.5!(p3.west)$) {$=$};
		\node at ($(p3.east)!.5!(p4.west)$) {$=$};
		\node at ($(p4.east)!.5!(p5.west)$) {$=$};
		\node at ($(p5.east)!.5!(p6.west)$) {$=$};
  \end{tikzpicture}
  }
  \qedhere
  \]
\end{proof}

\begin{proposition} \label{prop.condition3}
  Let $f\colon r \to s$ in $\cc$ have tabulation $f= f_R \cp f_L$, and let $\hat{f}$ denote the associated span. Then Condition (iii) in \cref{def.tabulation} holds iff $\hat{f}$ is monic in $\cc$.
\end{proposition}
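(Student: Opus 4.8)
The plan is to reduce the statement to \cref{prop.characterize_monos} by first exhibiting the associated span $\hat{f}$ as a \emph{left adjoint}. Once that is in hand, the equivalence is essentially immediate, with no diagram chase required.

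\textbf{Step 1: $\hat{f}$ is a left adjoint.} Recall $\hat{f}=\delta_{|f|}\cp(f_L\otimes f_R\tp)$. The comultiplication $\delta\colon 1\to 2$ in $\ww$ is a left adjoint by \cref{prop.left_adj_in_ww}, and since the supply $s_{|f|}\colon\ww\to\cc$ is a strong monoidal po-functor, it preserves left adjoints by \cref{lemma.funs_ladjs}; hence $\delta_{|f|}$ is a left adjoint in $\cc$. By condition (ii) of \cref{def.tabulation}, $f_L$ is a left adjoint; by condition (i), $f_R$ is a right adjoint, so its transpose $f_R\tp$ is a left adjoint by \cref{cor.leftadjs} (note it is $f_R\tp$, not $f_R$, that appears in $\hat{f}$). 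Since left adjoints are closed under $\otimes$ and composition by \cref{prop.ladj_smc}, the tensor $f_L\otimes f_R\tp$ and then the composite $\delta_{|f|}\cp(f_L\otimes f_R\tp)=\hat{f}$ are left adjoints.

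\textbf{Step 2: Apply the characterization of monos.} Having exhibited $\hat{f}$ as a left adjoint, I would apply \cref{prop.characterize_monos} with $m\coloneqq\hat{f}$: this left adjoint is a monomorphism in $\cc$ if and only if the unit of the $\hat{f},\hat{f}\tp$ adjunction is the identity. In a po-category the unit is the inequality $\id_{|f|}\leq\hat{f}\cp\hat{f}\tp$, so ``the unit is the identity'' is exactly the equation $\hat{f}\cp\hat{f}\tp=\id_{|f|}$, which is precisely Condition (iii) of \cref{def.tabulation}. This yields the desired equivalence.

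The only step carrying any content is Step 1, since it is what licenses the use of \cref{prop.characterize_monos}; the main thing to get right there is that condition (i) of a tabulation makes $f_R$ a \emph{right} adjoint, so that it is the transpose $f_R\tp$ that is left adjoint, together with the closure of left adjoints under $\otimes$ and $\cp$. After that the statement follows directly, as the tabulation condition (iii) and the unit-is-identity condition of \cref{prop.characterize_monos} are literally the same equation.
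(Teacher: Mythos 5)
Your proof is correct and follows essentially the same route as the paper: show $\hat{f}$ is a left adjoint (via closure of left adjoints under $\otimes$ and $\cp$, with $f_R\tp$ left adjoint by \cref{cor.leftadjs}), then invoke \cref{prop.characterize_monos} to identify monicity with the equation $\hat{f}\cp\hat{f}\tp=\id_{|f|}$ of Condition (iii). Your Step 1 is if anything slightly more explicit than the paper about why $\delta_{|f|}$ is a left adjoint.
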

\begin{proof}
  Note $\delta_{r}$, $f_L$, and $f_R\tp$ (by \cref{cor.leftadjs}) are left adjoints. Since left adjoints are closed under monoidal product and composition (\cref{prop.ladj_smc}), $\hat{f}=\delta_{r}\cp(f_L\otimes f_R\tp)$ is also left adjoint, with right adjoint equal to its transpose $(f_L\tp\otimes f_R)\cp\mu_{r}$. Thus by \cref{prop.characterize_monos}, Condition (iii) is equivalent to the fact that $\hat{f}$ is monic.
\end{proof}

\section{$\ladj(\rr)$ is regular}\label{sec.ladj_on_objects}

Fix a relational po-category $(\rr,I,\otimes)$ (see \cref{def.reg2cat}). Our goal in this section is to prove that its category $\ladj(\rr)$ of left adjoints is regular.


Thinking of $\rr$ as a po-category of relations, morphisms $x\colon I\to r$ represent subobjects $|x|\ss r$. When the image of a morphism $f$ is contained in $|x|$, \cref{prop.restrict_codomain} tells us how to factor $f$ through the inclusion $|x|\ss r$.

\begin{proposition} \label{prop.restrict_codomain}
  For any map $x \colon I \to r$, let $x=x_R\cp x_L$ be a tabulation, and let $f\colon s \ladjto r$ be a left adjoint. If
  $
  \begin{tikzpicture}[baseline=(p1.-10)]
  	\node (p1) {
		\begin{tikzpicture}[inner WD]
			\node[funcr] (f) {$f$};
			\node[link, left=.5 of m] (link) {};
			\draw (f.west) -- (link);
			\draw (f) -- +(10pt,0);
		\end{tikzpicture}		
		};
		\node (p2) [right=.3 of p1] {
		\begin{tikzpicture}[inner WD]
			\node[oshellr] (x) {$x$};
			\draw (x.east) -- +(.5,0);
		\end{tikzpicture}
		};
	  \node at ($(p1.east)!.5!(p2.west)$) {$\leq$};
  \end{tikzpicture}
  $, i.e.\ $\eta_s\cp f\leq x$, then:
  \begin{enumerate}[label=(\roman*)] 
    \item $x_L$ is a monomorphism in $\rr$.
    \item 
    $
	  \begin{tikzpicture}[inner WD, baseline=(f.-20)]
			\node[funcr] (f) {$f$};
			\node[funcl, right=.75 of f] (x) {$x_L$};
			\draw (f.west) -- +(-.5,0);
			\draw (f.east) -- (x.west);
			\draw (x.east) -- +(.5,0);
    \end{tikzpicture}
    $
    is a left adjoint.
    \item
    $
	  \begin{tikzpicture}[baseline=(p1.0)]
	  \node (p1) {
  	  \begin{tikzpicture}[inner WD]
  			\node[funcr] (f) {$f$};
  			\node[funcl, right=.75 of f] (x) {$x_L$};
  			\node[funcr, right=.75 of x] (x') {$x_L$};
  			\draw (f.west) -- +(-.5,0);
  			\draw (f.east) -- (x.west);
  			\draw (x.east) -- (x'.west);
  			\draw (x'.east) -- +(.5,0);
      \end{tikzpicture}
    };
	  \node (p2) [right=.3 of p1] {
  	  \begin{tikzpicture}[inner WD]
  			\node[funcr] (f) {$f$};
				\draw (f.west) -- +(-.5, 0);	  
				\draw (f.east) -- +(.5, 0);	  
	  	\end{tikzpicture}
		};
	  \node at ($(p1.east)!.5!(p2.west)$) {$=$};
    \end{tikzpicture}
    $.
  \end{enumerate}
  Moreover, in the case of equality
    $
  \begin{tikzpicture}[baseline=(p1.-10)]
  	\node (p1) {
		\begin{tikzpicture}[inner WD]
			\node[funcr] (f) {$f$};
			\node[link, left=.5 of m] (link) {};
			\draw (f.west) -- (link);
			\draw (f) -- +(10pt,0);
		\end{tikzpicture}		
		};
		\node (p2) [right=.3 of p1] {
		\begin{tikzpicture}[inner WD]
			\node[oshellr] (x) {$x$};
			\draw (x.east) -- +(.5,0);
		\end{tikzpicture}
		};
	  \node at ($(p1.east)!.5!(p2.west)$) {$=$};
  \end{tikzpicture}
  $, i.e.\ $\eta_s\cp f=x$, we have
  \begin{enumerate}
    \item[(iv)]     
    $
	  \begin{tikzpicture}[inner WD, baseline=(f.-20)]
			\node[funcr] (f) {$f$};
			\node[funcl, right=.75 of f] (x) {$x_L$};
			\draw (f.west) -- +(-.5,0);
			\draw (f.east) -- (x.west);
			\draw (x.east) -- +(.5,0);
    \end{tikzpicture}
    $
 is an extremal epimorphism in $\rr$.
  \end{enumerate}
\end{proposition}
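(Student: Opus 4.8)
The plan is to begin by putting the right leg of the tabulation into normal form. Since $x_R\colon I\to|x|$ is a right adjoint, its transpose $x_R\tp\colon|x|\to I$ is a left adjoint (\cref{cor.leftadjs}), hence a comonoid homomorphism; applying the counit-homomorphism law \textsc{(h2)} together with $\epsilon_I=\id_I$ (see \cref{eqn.supply_commute_tensors}, exactly as in \cref{prop.prerel_comon_unique}) shows $x_R\tp=\epsilon_{|x|}$, and dually $x_R=\eta_{|x|}$ — in words, any left adjoint into $I$ is the supplied counit, and its transpose is the unit (using $\epsilon\dashv\eta$ from \cref{prop.left_adj_in_ww} and \cref{cor.leftadjs}). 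Feeding $x_R\tp=\epsilon_{|x|}$ into the associated span $\hat{x}=\delta_{|x|}\cp(x_L\otimes x_R\tp)$ and collapsing with the comonoid counit law gives $\hat{x}=x_L$. Thus Condition (iii) of \cref{def.tabulation} reads $x_L\cp x_L\tp=\id_{|x|}$, which by \cref{prop.characterize_monos} says precisely that the left adjoint $x_L$ is a monomorphism; this proves (i), and note it does not use the hypothesis $\eta_s\cp f\le x$.

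For (ii), set $g\coloneqq f\cp x_L\tp$, so $g\tp=x_L\cp f\tp$, and I would verify that $g$ is deterministic and total, whence $g$ is a left adjoint by \cref{cor.leftadjs}. Determinism is immediate: $g\tp\cp g=x_L\cp f\tp\cp f\cp x_L\tp\le x_L\cp x_L\tp=\id_{|x|}$, using that $f$ is deterministic ($f\tp\cp f\le\id_s$, since $f$ is a left adjoint) followed by (i). Totality is the crux, and it is here that the hypothesis is consumed. The idea is to transpose $\eta_s\cp f\le x=x_R\cp x_L$ (transposition is monotone, being composition with cups and caps) to obtain $f\tp\cp\epsilon_s\le x_L\tp\cp\epsilon_{|x|}$ (using $\eta_s\tp=\epsilon_s$ and $x_R\tp=\epsilon_{|x|}$), then precompose with $f$ and invoke totality of $f$ ($\id_s\le f\cp f\tp$) to get $\epsilon_s\le f\cp f\tp\cp\epsilon_s\le f\cp x_L\tp\cp\epsilon_{|x|}=g\cp\epsilon_{|x|}$. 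Since the reverse inequality $g\cp\epsilon_{|x|}\le\epsilon_s$ holds automatically (every morphism is a lax comonoid homomorphism, \cref{eqn.lax_hom_prerel}), we conclude $g\cp\epsilon_{|x|}=\epsilon_s$, i.e.\ $g$ is a counit homomorphism, hence total by \cref{prop.adj_vs_hom}. This establishes (ii).

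Part (iii) is then formal: $g\cp x_L=f\cp x_L\tp\cp x_L\le f$ because $x_L\tp\cp x_L\le\id_r$ (the counit of $x_L\dashv x_L\tp$); but $g\cp x_L$ and $f$ are both left adjoints (composites of left adjoints), so \cref{prop.ladj_discrete} promotes this inequality to the equality $g\cp x_L=f$. For (iv), assume $\eta_s\cp f=x$. Then $\eta_s\cp g=\eta_s\cp f\cp x_L\tp=x\cp x_L\tp=x_R\cp x_L\cp x_L\tp=x_R=\eta_{|x|}$, using (i) and the first paragraph; this exhibits $g$ as a unit homomorphism, hence co-total by \cref{prop.adj_vs_hom}, hence epi in $\rr$ by the dual half of \cref{prop.characterize_monos}. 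Finally, \cref{prop.extremal_epis} turns an epi left adjoint into an extremal epimorphism in $\ladj(\rr)$, which is exactly the content of (iv).

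The single genuinely delicate point is the totality argument in (ii): everything else follows either from the structural results on left adjoints (\cref{cor.leftadjs,prop.ladj_discrete,prop.characterize_monos}) or from the two clean identities $x_R\tp=\epsilon_{|x|}$ and $\hat{x}=x_L$. The totality step is where the containment hypothesis $\eta_s\cp f\le x$ is essential, and the key manoeuvre is to transpose that containment and combine it with the totality $\id_s\le f\cp f\tp$ already available from $f$ being a left adjoint; without the hypothesis the middle factor $x_L\tp\cp x_L$ points the wrong way, so no purely structural argument suffices.
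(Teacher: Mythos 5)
Your proof is correct and follows essentially the same route as the paper's: normalize $x_R$ to $\eta_{|x|}$ for (i), establish totality of $f\cp x_L\tp$ by transposing the hypothesis and composing with the totality of $f$ for (ii), and conclude (iii) and (iv) via discreteness of left adjoints and the unit-homomorphism criterion. The only divergence is in (iii), where you obtain $f\cp x_L\tp\cp x_L\le f$ directly from the counit of the adjunction $x_L\dashv x_L\tp$, whereas the paper derives the opposite inequality $f\cp x_L\tp\cp x_L\ge f$ from the hypothesis via a Frobenius rearrangement---your step is slightly cleaner and avoids re-using the hypothesis, and either inequality suffices once both sides are known to be left adjoints.
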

\begin{proof}
 Given any morphism $x\colon I \to r$, its tabulation gives an object $|x|$, a right adjoint $x_R\colon I \radjto |x|$, and a left adjoint $x_L\colon |x| \ladjto r$. Note that since every right adjoint is a unit homomorphism, the unit $\eta_{|x|}$ is in fact the unique right adjoint $I \radjto |x|$. Hence we write
  \[
  \begin{tikzpicture}[baseline=(p1.-10)]
  	\node (p1) {
		\begin{tikzpicture}[inner WD]
			\node[oshellr] (x) {$x$};
			\draw (x.east) -- +(.5,0);
		\end{tikzpicture}
		};
		\node (p2) [right=.3 of p1] {
		\begin{tikzpicture}[inner WD]
			\node[funcr] (f) {$x_L$};
			\node[link, left=.5 of m] (link) {};
			\draw (f.west) -- (link);
			\draw (f) -- +(10pt,0);
		\end{tikzpicture}		
		};
	  \node at ($(p1.east)!.5!(p2.west)$) {$=$};
  \end{tikzpicture}
  \] 
  and by unitality and the definition \eqref{eqn.tabulation} of tabulation, $x_L$ is monic. This proves (i).

  By \cref{cor.leftadjs,prop.characterize_monos}, since $x_L$ is monic, ${x_L}\tp$ is deterministic---so the composite $f \cp {x_L}\tp$ is also deterministic---and it remains to see that $f \cp {x_L}\tp$ is total:
  \[
  \begin{tikzpicture}
  	\node (p1) {
		\begin{tikzpicture}[inner WD]
			\node[funcr] (f1) {$f$};
			\node[funcl, right=.5 of f1] (f2) {$x_L$};
			\node[link, right=.5 of f2] (link) {};
			\draw (f1) -- +(-10pt,0);
			\draw (f1) -- (f2);
			\draw (f2) -- (link);
		\end{tikzpicture}
		};
		\node (p2) [right=.5 of p1] {
		\begin{tikzpicture}[inner WD]
			\node[funcr] (f1) {$f$};
			\node[oshelll, right=.5 of f1] (f2) {$x$};
			\draw (f1) -- +(-10pt,0);
			\draw (f1) -- (f2);
		\end{tikzpicture}		
		};
		\node (p3) [right=.5 of p2] {
		\begin{tikzpicture}[inner WD]
			\node[funcr] (f1) {$f$};
			\node[funcl, right=.5 of f1] (f2) {$f$};
			\node[link, right=.5 of f2] (link) {};
			\draw (f1) -- +(-10pt,0);
			\draw (f1) -- (f2);
			\draw (f2) -- (link);
		\end{tikzpicture}
		};
		\node (p4) [right=.5 of p3] {
		\begin{tikzpicture}[inner WD]
			\node[link] (link) {};
			\draw (link) -- +(-10pt,0);
		\end{tikzpicture}		
		};
		\node (p5) [right=.5 of p4] {
		};
    \node at ($(p1.east)!.5!(p2.west)$) {$=$};
		\node at ($(p2.east)!.5!(p3.west)$) {$\ge$};
		\node at ($(p3.east)!.5!(p4.west)$) {$\ge$};
  \end{tikzpicture}
  \]
  Thus $f \cp {x_L}\tp$ is a left adjoint, proving (ii).

  For (iii), we have the following inequalities, using \cref{lem.lrl_is_l} in the final step:
  \[
  \begin{tikzpicture}
  	\node (p1) {
		\begin{tikzpicture}[inner WD]
			\node[funcr] (f1) {$f$};
			\node[funcl, right=.5 of f1] (f2) {$x_L$};
			\node[funcr, right=.5 of f2] (f3) {$x_L$};
			\draw (f1) -- +(-10pt,0);
			\draw (f1) -- (f2);
			\draw (f2) -- (f3);
			\draw (f3) -- +(10pt,0);
		\end{tikzpicture}
		};
		\node (p2) [right=.5 of p1] {
		\begin{tikzpicture}[inner WD]
			\node[funcr] (f1) {$f$};
			\node[link, right=.5 of f1] (link) {};
			\node[funcd, above=.5 of link] (f2) {$x_L$};
      \node[link, above=.5 of f2] (link2) {};
			\draw (f1) -- +(-10pt,0);
			\draw (f1) -- (link);
			\draw (f2) -- (link);
			\draw (f2) -- (link2);
      \draw (link) -- +(10pt,0);
    \end{tikzpicture}
		};
		\node (p3) [right=.5 of p2] {
		\begin{tikzpicture}[inner WD]
			\node[funcr] (f1) {$f$};
			\node[link, right=.5 of f1] (link) {};
			\node[oshelld, above=.5 of link] (f2) {$x$};
			\draw (f1) -- +(-10pt,0);
			\draw (f1) -- (link);
			\draw (f2) -- (link);
      \draw (link) -- +(10pt,0);
    \end{tikzpicture}
		};
		\node (p4) [right=.5 of p3] {
		\begin{tikzpicture}[inner WD]
			\node[funcr] (f1) {$f$};
			\node[link, right=.5 of f1] (link) {};
			\node[funcd, above=.5 of link] (f2) {$f$};
      \node[link, above=.5 of f2] (link2) {};
			\draw (f1) -- +(-10pt,0);
			\draw (f1) -- (link);
			\draw (f2) -- (link);
			\draw (f2) -- (link2);
      \draw (link) -- +(10pt,0);
    \end{tikzpicture}
		};
		\node (p5) [right=.5 of p4] {
		\begin{tikzpicture}[inner WD]
			\node[funcr] (f1) {$f$};
			\node[funcl, right=.5 of f1] (f2) {$f$};
			\node[funcr, right=.5 of f2] (f3) {$f$};
			\draw (f1) -- +(-10pt,0);
			\draw (f1) -- (f2);
			\draw (f2) -- (f3);
			\draw (f3) -- +(10pt,0);
		\end{tikzpicture}
		};
		\node (p6) [right=.5 of p5] {
		\begin{tikzpicture}[inner WD]
			\node[funcr] (f1) {$f$};
			\draw (f1) -- +(-10pt,0);
			\draw (f1) -- +(10pt,0);
		\end{tikzpicture}				
		};
		\node at ($(p1.east)!.5!(p2.west)$) {$=$};
		\node at ($(p2.east)!.5!(p3.west)$) {$=$};
		\node at ($(p3.east)!.5!(p4.west)$) {$\ge$};
		\node at ($(p4.east)!.5!(p5.west)$) {$=$};
		\node at ($(p5.east)!.5!(p6.west)$) {$=$};
  \end{tikzpicture}
  \]
Since the order on left adjoints is discrete (\cref{prop.ladj_discrete}), this implies (iii).

  Finally, for (iv) assume $x=\eta\cp f$. By \cref{prop.adj_vs_hom,prop.extremal_epis,prop.characterize_monos} it is enough to prove that $f \cp {x_L}\tp$ is a unit homomorphism. This follows from (i):
  \[
  \begin{tikzpicture}[baseline=(p1)]
		\node (p1) {
		\begin{tikzpicture}[inner WD]
			\node[funcr] (f1) {$f$};
			\node[funcl, right=.75 of f1] (f2) {$x_L$};
			\node[link, left=.5 of f1] (link) {};
      \draw (f2) -- +(10pt,0);
      \draw (f1) -- (f2);
			\draw (f1.west) -- (link);
		\end{tikzpicture}		
		};
		\node (p2) [right=.5 of p1] {
		\begin{tikzpicture}[inner WD]
			\node[funcr] (f1) {$x_L$};
			\node[funcl, right=.75 of f1] (f2) {$x_L$};
			\node[link, left=.5 of f1] (link) {};
			\draw (f2) -- +(10pt,0);
      \draw (f1) -- (f2);
			\draw (f1.west) -- (link);
		\end{tikzpicture}		
		};
		\node (p3) [right=.5 of p2] {
		\begin{tikzpicture}[inner WD]
			\node[link] (link) {};
			\draw (link) -- +(10pt,0);
		\end{tikzpicture}		
		};
    \node at ($(p1.east)!.5!(p2.west)$) {$=$};
		\node at ($(p2.east)!.5!(p3.west)$) {$=$};
  \end{tikzpicture}
  \qedhere
  \]
\end{proof}

\begin{corollary}[Factorization] \label{prop.factorization}
  Every morphism in $\ladj(\rr)$ factors as an extremal epi followed by a mono.
\end{corollary}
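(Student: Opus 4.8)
The plan is to reduce everything to \cref{prop.restrict_codomain}, which already packages the factorization machinery; the corollary should then follow almost immediately once the right subobject is chosen.

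First, given a morphism $f\colon s\to r$ in $\ladj(\rr)$---that is, a left adjoint---I would form the ``image'' subobject $x\coloneqq\eta_s\cp f\colon I\to r$, where $\eta_s$ is the unit supplied by the wiring structure. Since $\rr$ is relational, $x$ admits a tabulation $x=x_R\cp x_L$, producing an object $|x|$ together with a left adjoint $x_L\colon|x|\ladjto r$.

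Next I would apply \cref{prop.restrict_codomain} in its equality form, using the hypothesis $\eta_s\cp f=x$, which holds by the very definition of $x$. Parts (i)--(iv) then do all the work at once: (i) says $x_L$ is a monomorphism in $\rr$; (ii) says $e\coloneqq f\cp x_L\tp$ is again a left adjoint, hence a genuine morphism of $\ladj(\rr)$; (iii) gives $e\cp x_L=f$ (reading off $f\cp x_L\tp\cp x_L=f$); and (iv) says $e$ is an extremal epimorphism in $\ladj(\rr)$.

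It remains only to handle the bookkeeping. Since $\ladj(\rr)$ is a wide subcategory of $\rr$ by \cref{prop.ladj_smc}, any morphism that is monic in $\rr$ is a fortiori monic in $\ladj(\rr)$: left-cancellation against all morphisms of $\rr$ certainly implies left-cancellation against the sub-collection of left adjoints. Hence $x_L$ is monic in $\ladj(\rr)$, and $f=e\cp x_L$ is the desired factorization of $f$ as an extremal epi followed by a mono. I expect no real obstacle here: the entire difficulty has been front-loaded into \cref{prop.restrict_codomain}, so the only genuine choices are to take $x=\eta_s\cp f$ as the morphism whose tabulation cuts out the image, and to note the trivial descent of monomorphisms along the wide inclusion.
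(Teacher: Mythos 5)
Your proposal is correct and is exactly the paper's argument: the paper's entire proof is ``apply \cref{prop.restrict_codomain} with $x=\eta\cp f$,'' and you have simply unpacked parts (i)--(iv) and added the (correct, implicit in the paper) observation that monomorphisms in $\rr$ remain monomorphisms in the wide subcategory $\ladj(\rr)$.
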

\begin{proof}
  Given $f$ in $\ladj(\rr)$, apply \cref{prop.restrict_codomain} in the case that $x = \eta \cp f$.
\end{proof}

\begin{corollary}\label{cor.extremal_epis}
  Let $e$ be a left adjoint in a relational category $\rr$. Then $e$ is an epi in $\rr$ iff $e$ is an extremal epi in $\ladj(\rr)$.
\end{corollary}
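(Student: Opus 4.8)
The forward implication requires no new work: a relational po-category is in particular prerelational, so if $e$ is epi in $\rr$ then \cref{prop.extremal_epis} already tells us $e$ is an extremal epi in $\ladj(\rr)$. The whole content is therefore the converse, and this is exactly where I would spend the relational hypothesis—namely, the tabulations packaged into the factorization result \cref{prop.restrict_codomain}.

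For the converse I would run the standard argument that, in the presence of an (extremal epi)--mono factorization, an extremal epi can have no proper mono part. Write $e\colon s\ladjto r$ and set $x\coloneqq\eta_s\cp e\colon I\to r$. Applying \cref{prop.restrict_codomain} with $f=e$ in the equality case $\eta_s\cp f=x$ produces a factorization $e=e'\cp x_L$, where $e'\coloneqq e\cp x_L\tp$. Here \cref{prop.restrict_codomain}(i) gives that $x_L$ is a monomorphism in $\rr$, while (ii) gives that $e'$ is a left adjoint; moreover the proof of (iv) shows $e'$ is a unit homomorphism, hence co-total by \cref{prop.adj_vs_hom}, hence epi in $\rr$ by \cref{prop.characterize_monos}. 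Since $\ladj(\rr)\ss\rr$ is wide, $x_L$ is also monic in $\ladj(\rr)$, so $e=e'\cp x_L$ is a factorization of $e$ through a mono inside $\ladj(\rr)$. As $e$ is extremal epi in $\ladj(\rr)$, the defining property forces $x_L$ to be an isomorphism. Then $e=e'\cp x_L$ is a composite of an epi ($e'$) and an iso ($x_L$) in $\rr$, hence epi in $\rr$, completing the converse. (If one prefers to check co-totality of $e$ by hand, note $e\tp\cp e=x_L\tp\cp e'\tp\cp e'\cp x_L\ge x_L\tp\cp x_L=\id_r$, using $e'\tp\cp e'\ge\id$ and $x_L\tp=x_L\inv$.)

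The only genuine obstacle is obtaining the factorization together with the fact that its left-hand factor $e'$ is co-total (equivalently, epi); everything after that is formal, since extremality immediately collapses the monic factor to an isomorphism. This is precisely why prerelationality is not enough: \cref{prop.restrict_codomain} relies on tabulating $x$, which is available only in a relational po-category. The points I would be careful to verify are the orientation bookkeeping—so that $e'$ comes out co-total (giving \emph{epi} via \cref{prop.characterize_monos}) rather than total—and the harmless but necessary observation that a monomorphism of $\rr$ remains one in the wide subcategory $\ladj(\rr)$.
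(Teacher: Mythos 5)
Your proposal is correct and follows essentially the same route as the paper: both apply the factorization of \cref{prop.restrict_codomain} with $x=\eta\cp e$ to write $e=e'\cp x_L$ with $e'$ a unit-homomorphic (hence co-total, hence epi) left adjoint and $x_L$ monic, then use extremality in $\ladj(\rr)$ to force $x_L$ to be an iso and conclude $e$ is epi in $\rr$. Your extra care about monos in $\rr$ versus $\ladj(\rr)$, and the explicit check $\id_r\le e\tp\cp e$, are fine elaborations of steps the paper leaves implicit.
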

\begin{proof}
  The forward direction was shown in \cref{prop.extremal_epis}.
  
  For the converse, let $e$ be an extremal epi in $\ladj(\rr)$. Note that by \cref{prop.characterize_monos} and \cref{prop.adj_vs_hom}, $e$ is an epi in $\rr$ iff $e$ is a unit homomorphism. Thus use \cref{prop.factorization,prop.adj_vs_hom} to factor $e$ into a unit-homomorphic left adjoint followed by a mono; by definition this mono must be an iso. Hence $e$ is the composite of two unit homomorphisms, and hence itself a unit homomorphism.
\end{proof}

\begin{remark}
  Although we shall not need it, it is also true that a left adjoint $m$ in a relational po-category $\rr$ is mono in $\rr$ iff it is mono in $\ladj(\rr)$. Thus we see that extremal epis and monos in $\ladj(\rr)$ are the shadows of epis and monos in $\rr$.

  Note that these facts do not hold for prerelational po-categories; for example, they do not hold true in $\ww$; see \cref{ex.ww_not_relational}. 
\end{remark}

\begin{proposition}[Universal property of tabulation] \label{prop.univ_ppty_tabulation}
  Let $y\colon r_1 \to r_2$ have tabulation $y_R \cp y_L$. Suppose given an object $s$ and left adjoints $f_1\colon s\ladjto r_1$ and $f_2\colon s\ladjto r_2$ such that 
  \begin{equation}\label{eqn.ineq8493}
	\begin{tikzpicture}[unoriented WD, font=\small,baseline=(P1.-10)]
	  \node (P1) {
	    \begin{tikzpicture}[inner WD]
	      \node[funcl] (f) {$f_1$};
	      \node[funcr, right=.75 of f] (g) {$f_2$};
	      \draw (f) -- +(-10pt, 0);
	      \draw (f) -- (g);
	      \draw (g) -- +(10pt,0);
	    \end{tikzpicture}	
	  };
	  \node[right=2 of P1] (P2) {
	    \begin{tikzpicture}[inner WD]
	      \node[oshellr, inner sep=1pt] (f) {$y$};
	      \draw (f.west) -- +(-.5, 0);
	      \draw (f.east) -- +(.5,0);
	    \end{tikzpicture}	
	  };
	  \node at ($(P1.east)!.5!(P2.west)$) {$\le$};
  \end{tikzpicture}.
  \end{equation}
  Define $\hat{f}\colon s\to r_1\otimes r_2$, \;$\hat{y}\colon |y|\to r_1\otimes r_2$, and $h\colon s\to |y|$ to be the following morphisms:
  \[
  \begin{tikzpicture}[unoriented WD, font=\small,baseline=(P1.-10)]
  	\node (P-1) {
    \begin{tikzpicture}[inner WD, baseline=(dot1)]
    	\node[funcr] (fl) {$f_1$};
  		\node[funcr, below=.5 of fl] (fr!) {$f_2$};
  		\node[link] at ($(fl.west)!.5!(fr!.west)+(-1,0)$) (dot1) {};
  		\draw (fl.west) to[out=180, in=60] (dot1);
  		\draw (fr!.west) to[out=180, in=-60] (dot1);
			\draw (fl.east) -- +(.5,0);
			\draw (fr!.east) -- +(.5,0);
  		\draw (dot1) to +(-1,0) node[left=.3, font=\normalsize] {$\hat{f}\coloneqq$};
    \end{tikzpicture}
    };
  	\node (P0) [right=3 of P-1] {
    \begin{tikzpicture}[inner WD, baseline=(dot1)]
    	\node[funcr] (fl) {$y_R$};
  		\node[funcr, below=.5 of fl] (fr!) {$y_L$};
  		\node[link] at ($(fl.west)!.5!(fr!.west)+(-1,0)$) (dot1) {};
  		\draw (fl.west) to[out=180, in=60] (dot1);
  		\draw (fr!.west) to[out=180, in=-60] (dot1);
			\draw (fl.east) -- +(.5,0);
			\draw (fr!.east) -- +(.5,0);
  		\draw (dot1) to +(-1,0) node[left=.3, font=\normalsize] {$\hat{y}\coloneqq$};
    \end{tikzpicture}
    };
  	\node (P1) [right=3 of P0] {
    \begin{tikzpicture}[inner WD, baseline=(dot1)]
      \node[funcr] (f) {$h$};
      \draw (f) -- +(-15pt,0);
      \draw (f) -- +(15pt,0);
    \end{tikzpicture}
    };
  	\node[right=1 of P1] (P2) {
    \begin{tikzpicture}[inner WD, baseline=(dot1)]
    	\node[funcr] (fl) {$f_1$};
  		\node[funcl, right=.8 of fl] (fl*) {$y_R$};
  		\node[funcr, below=.5 of fl] (fr!) {$f_2$};
  		\node[funcl] at (fl*|-fr!) (fr) {$y_L$};
  		\draw (fl.east) -- (fl*.west);
  		\draw (fr!.east) -- (fr.west);
  		\node[link] at ($(fl.west)!.5!(fr!.west)+(-1,0)$) (dot1) {};
  		\node[link] at ($(fl*.east)!.5!(fr.east)+(1,0)$) (dot2) {};
  		\draw (fl.west) to[out=180, in=60] (dot1);
  		\draw (fr!.west) to[out=180, in=-60] (dot1);
  		\draw (dot1) to +(-1,0);
  		\draw (fl*.east) to[out=0, in=120] (dot2);
  		\draw (fr.east) to[out=0, in=-120] (dot2);
  		\draw (dot2) to +(1,0);
    \end{tikzpicture}
    };
    \node at ($(P1.east)!.5!(P2.west)$) {$\coloneqq$};
\end{tikzpicture}
\]  
  Then $h$ is a left adjoint, and it is the unique left adjoint obeying $\hat{f}=h\cp\hat{y}$.
  Finally, if \eqref{eqn.ineq8493} is an equality then $\hat{f}=h\cp\hat{y}$ is an image factorization (i.e.\ $h$ is extremal epi and $\hat{y}$ is mono).
\end{proposition}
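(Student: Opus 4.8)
The plan is to recognize the whole proposition as a direct application of the factorization result \cref{prop.restrict_codomain}, run on the object $r_1\otimes r_2$, on the left adjoint $\hat{f}=\delta_s\cp(f_1\otimes f_2)$, and on the subobject of $r_1\otimes r_2$ cut out by $\hat{y}$. First I would record that the three morphisms named in the statement are read off from their pictures as $\hat{f}=\delta_s\cp(f_1\otimes f_2)$, $\hat{y}=\delta_{|y|}\cp(y_R\tp\otimes y_L)$, and $h=\hat{f}\cp\hat{y}\tp$ (using $\delta\tp=\mu$ from the Frobenius structure). Both $\hat f$ and $\hat y$ are left adjoints: $\delta$ is a left adjoint by \cref{cor.leftadjs}, $f_1,f_2,y_R\tp,y_L$ are left adjoints, and left adjoints are closed under $\otimes$ and composition by \cref{prop.ladj_smc}; this is exactly the computation already made in the proof of \cref{prop.condition3}.

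Next I would present $\hat{y}$ as the monic leg of a tabulation. Set $x\coloneqq\eta_{|y|}\cp\hat{y}\colon I\to r_1\otimes r_2$ and claim $(\eta_{|y|},\hat{y})$ is a tabulation of $x$. Here $\eta_{|y|}$ is a right adjoint (the supplied $\eta$ is one by \cref{prop.left_adj_in_ww}, and strong monoidal po-functors preserve adjoints), $\hat{y}$ is a left adjoint as above, and Condition (iii) of \cref{def.tabulation} holds by \cref{prop.condition3}: the associated span of $x$ is $\delta_{|y|}\cp(\hat{y}\otimes\eta_{|y|}\tp)=\delta_{|y|}\cp(\hat{y}\otimes\epsilon_{|y|})$, which collapses to $\hat{y}$ by counitality, and $\hat{y}$ is monic because the tabulation of $y$ itself satisfies Condition (iii) (again via \cref{prop.condition3}, after using cocommutativity of $\delta$ to pass between $\delta_{|y|}\cp(y_R\tp\otimes y_L)$ and the span $\delta_{|y|}\cp(y_L\otimes y_R\tp)$ of \cref{def.tabulation}, which differ only by the symmetry).

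The last input needed is the containment hypothesis $\eta_s\cp\hat{f}\le x$ of \cref{prop.restrict_codomain}, and this is precisely \eqref{eqn.ineq8493} read through the cup. Since the cup is $\eta\cp\delta$ (see \eqref{eqn.cap_cup}), sliding the legs across it identifies $\eta_s\cp\hat{f}$ with the name $\classify{f_1\tp\cp f_2}$ and $x=\eta_{|y|}\cp\hat{y}$ with $\classify{y_R\cp y_L}=\classify{y}$; as bending a wire is an order-isomorphism (inverse given by bending back along the cap), $\classify{f_1\tp\cp f_2}\le\classify{y}$ is equivalent to $f_1\tp\cp f_2\le y$. Feeding this into \cref{prop.restrict_codomain} delivers the conclusions at once: its part (ii) says $h=\hat{f}\cp\hat{y}\tp$ is a left adjoint, its part (iii) says $h\cp\hat{y}=\hat{f}$, and in the equality case its part (iv) says $h$ is an extremal epi; combined with the monicity of $\hat{y}$ this is exactly the asserted image factorization.

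For uniqueness, suppose $h'$ is any morphism with $h'\cp\hat{y}=\hat{f}=h\cp\hat{y}$. Since $\hat{y}$ is a monomorphism (\cref{def.epis_monos_2_cat}), the cancellation property for $(-\cp\hat{y})$ forces $h'=h$, so in particular $h$ is the unique left adjoint with $\hat f=h\cp\hat y$. I expect the only genuine friction to be the bookkeeping of the third paragraph: making the cup/name dictionary precise enough that \eqref{eqn.ineq8493} is literally the subobject-containment hypothesis of \cref{prop.restrict_codomain}, and checking that the span used to define $\hat{y}$ here agrees, up to the harmless symmetry, with the one \cref{def.tabulation} certifies to be monic.
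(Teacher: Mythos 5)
Your proposal is correct and follows essentially the same route as the paper's own proof: set $x=\eta_{|y|}\cp\hat{y}$, use \cref{prop.condition3} to see $\hat{y}$ is monic so that $(\eta_{|y|},\hat{y})$ tabulates $x$, translate \eqref{eqn.ineq8493} into the hypothesis $\eta_s\cp\hat{f}\le x$, and then apply \cref{prop.restrict_codomain} to get that $h=\hat{f}\cp\hat{y}\tp$ is a left adjoint with $\hat{f}=h\cp\hat{y}$ (extremal epi in the equality case), with uniqueness from monicity of $\hat{y}$. The extra bookkeeping you flag (the cup/name dictionary and the harmless symmetry in the ordering of the span legs) is exactly the detail the paper leaves implicit.
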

\begin{proof}
  Letting $x\coloneqq (\eta_{|y|}\cp\hat{y})$, it follows from \eqref{eqn.ineq8493} that $(\eta_s\cp\hat{f})\leq x$. By \cref{prop.condition3}, $\hat{y}$ is monic so $(\eta_{|y|},\hat{y})$ is a tabulation of $x$. Thus by \cref{prop.restrict_codomain}, $h=\hat{f}\cp\hat{y}\tp$ is a left adjoint and $\hat{f}=h\cp\hat{y}$. Furthermore, if \eqref{eqn.ineq8493} is an equality then $h$ is an extremal epi. Uniqueness of $h$ follows from the fact that $\hat{y}$ is monic: if also $\hat{f}=h'\cp\hat{y}$ then $h=h'$.
 \end{proof}

\begin{corollary}\label{cor.tabulations_unique}
In a relational po-category, tabulations are unique up to isomorphism.
\end{corollary}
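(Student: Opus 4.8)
The plan is to extract uniqueness from the universal property of tabulation, \cref{prop.univ_ppty_tabulation}, which presents the associated span as a kind of terminal object among ``compatible'' spans; uniqueness up to isomorphism is then a formal consequence.

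First I would fix a morphism $f\colon r\to s$ in a relational po-category together with two tabulations, say $r\Tickar{f_R}|f|\Tickar{f_L}s$ and $r\Tickar{f'_R}|f|'\Tickar{f'_L}s$, with associated spans $\hat f\colon |f|\to r\otimes s$ and $\hat f'\colon |f|'\to r\otimes s$. The key observation is that the two legs of $\hat f$, namely $f_R\tp\colon |f|\ladjto r$ and $f_L\colon |f|\ladjto s$, are both left adjoints---for $f_R\tp$ this uses \cref{cor.leftadjs}---and that they satisfy the compatibility hypothesis of \cref{prop.univ_ppty_tabulation} with room to spare, since $(f_R\tp)\tp\cp f_L = f_R\cp f_L = f$ is an actual equality.

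Next I would apply \cref{prop.univ_ppty_tabulation} to the tabulation $(f'_R,f'_L)$ of $f$, taking the source object to be $|f|$ and the two left adjoints to be $f_R\tp$ and $f_L$. Since the compatibility holds (indeed with equality), this yields a left adjoint $h\colon |f|\to |f|'$ satisfying $\hat f = h\cp\hat f'$. Running the same argument with the roles of the two tabulations reversed produces a left adjoint $h'\colon |f|'\to |f|$ with $\hat f' = h'\cp\hat f$. Substituting then gives $\hat f = (h\cp h')\cp\hat f$ and $\hat f' = (h'\cp h)\cp\hat f'$.

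The conclusion comes from joint monicness. By Condition (iii) of \cref{def.tabulation} together with \cref{prop.condition3}, both $\hat f$ and $\hat f'$ are monic, so cancelling them in the two displayed equations gives $h\cp h' = \id_{|f|}$ and $h'\cp h = \id_{|f|'}$; thus $h$ is an isomorphism with inverse $h'$. Finally I would check that $h$ intertwines the two tabulations---that $f_R\cp h = f'_R$ and $h\cp f'_L = f_L$---by capping off one leg of $\hat f = h\cp\hat f'$ at a time with the counits, recovering each leg of $\hat f$ from the corresponding leg of $\hat f'$ precomposed with $h$, and using $h\tp = h^{-1}$ for the right-adjoint leg. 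The only real care needed is bookkeeping of conventions: matching the ordering of the tensor factors and the precise form of $\hat f$ in \cref{def.tabulation} against the span built in \cref{prop.univ_ppty_tabulation}, which differ only by the symmetry isomorphism and so are identified automatically in the hypergraph calculus.
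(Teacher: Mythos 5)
Your proof is correct and takes essentially the same route as the paper's: apply the universal property of tabulation (\cref{prop.univ_ppty_tabulation}) in both directions, with the compatibility hypothesis holding as an equality, to obtain left adjoints $h$ and $h'$, and conclude they are mutually inverse. The paper leaves the final cancellation implicit, whereas you justify it via monicness of the associated spans (equivalently, via the uniqueness clause of \cref{prop.univ_ppty_tabulation}); this is just a spelled-out version of the same argument.
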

\begin{proof}
Suppose a map $y\colon r\to s$ has tabulations $f_R\cp f_L$ and $g_R\cp g_L$. By \cref{prop.univ_ppty_tabulation} we obtain universal left adjoints $h\colon|f|\to|g|$ and $h'\colon|g|\to|f|$, and they are mutually isomorphic.
\end{proof}

\begin{corollary} \label{cor.pullbacks}
  Let $g_1\colon r_1 \ladjto t$ and $g_2 \colon r_2 \ladjto t$ be left adjoints in $\rr$. Then their pullback in $\ladj(\rr)$ is given by the tabulation of $g_1 \cp g_2\tp$.
\end{corollary}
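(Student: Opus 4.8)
The plan is to reduce everything to \cref{prop.univ_ppty_tabulation} and \cref{lemma.square_BC_transform}, which between them already contain the substance. Write $y \coloneqq g_1\cp g_2\tp\colon r_1\to r_2$, choose a tabulation $y = y_R\cp y_L$ with apex $|y|$, and set $p_1\coloneqq y_R\tp\colon |y|\ladjto r_1$ and $p_2\coloneqq y_L\colon |y|\ladjto r_2$. These are both left adjoints ($p_2$ by the definition of tabulation, $p_1$ by \cref{cor.leftadjs}), so $(|y|,p_1,p_2)$ is a candidate span in $\ladj(\rr)$; its legs are exactly those of the associated span $\hat y = \delta_{|y|}\cp(p_1\otimes p_2)$, which is monic by \cref{prop.condition3}.

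First I would verify the cone condition $p_1\cp g_1 = p_2\cp g_2$. By \cref{lemma.square_BC_transform} this equality holds iff $p_1\tp\cp p_2 \le g_1\cp g_2\tp$; but $p_1\tp\cp p_2 = y_R\cp y_L = y = g_1\cp g_2\tp$, so in fact equality holds and the required inequality is immediate.

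Next, the universal property. Given any competing cone, i.e.\ an object $s$ with left adjoints $f_1\colon s\ladjto r_1$ and $f_2\colon s\ladjto r_2$ satisfying $f_1\cp g_1 = f_2\cp g_2$, \cref{lemma.square_BC_transform} converts this equality into the inequality $f_1\tp\cp f_2 \le g_1\cp g_2\tp = y$, which is precisely the hypothesis \eqref{eqn.ineq8493} of \cref{prop.univ_ppty_tabulation}. That proposition then supplies a unique left adjoint $h\colon s\to |y|$ with $\hat f = h\cp\hat y$, where $\hat f = \delta_s\cp(f_1\otimes f_2)$.

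Finally I would translate the span equation $\hat f = h\cp\hat y$ into the two pullback triangles $h\cp p_1 = f_1$ and $h\cp p_2 = f_2$. Composing $\hat f = h\cp\hat y$ on the right with $\id_{r_1}\otimes\epsilon_{r_2}$ and using that $f_2$ and $p_2$ are comonoid homomorphisms (\cref{cor.leftadjs}), together with the counit law $\delta\cp(\id\otimes\epsilon)=\id$, collapses the left side to $f_1$ and the right side to $h\cp p_1$; symmetrically, composing with $\epsilon_{r_1}\otimes\id_{r_2}$ gives $h\cp p_2 = f_2$. Conversely, given the two triangles, $h\cp\hat y = h\cp\delta_{|y|}\cp(p_1\otimes p_2) = \delta_s\cp(h\otimes h)\cp(p_1\otimes p_2) = \delta_s\cp(f_1\otimes f_2) = \hat f$, using that $h$ itself is a comonoid homomorphism. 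Hence the mediating left adjoints of \cref{prop.univ_ppty_tabulation} coincide exactly with the mediating maps of the pullback, and uniqueness transfers directly. The only genuine computation is this last translation between the span form and the triangle form; everything else is an application of the two cited results, so I expect no obstacle beyond getting these comonoid bookkeeping steps right.
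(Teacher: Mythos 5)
Your proof is correct and takes the same route as the paper: the paper's entire proof is the one-liner ``this is exactly the content of \cref{prop.univ_ppty_tabulation} when $y\coloneqq g_1\cp g_2\tp$,'' and your argument is precisely the unpacking of that claim, with \cref{lemma.square_BC_transform} translating commuting squares of left adjoints into the inequality \eqref{eqn.ineq8493} and the comonoid bookkeeping translating $\hat f = h\cp\hat y$ into the two pullback triangles. All the individual steps check out, so this is just a more explicit version of the paper's intended argument.
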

\begin{proof}
  This is exactly the content of \cref{prop.univ_ppty_tabulation} when $y\coloneqq g_1 \cp g_2\tp$. 
\end{proof}

\begin{proposition} \label{prop.stable_epis}
  In $\ladj(\rr)$, extremal epis are stable under pullback.
\end{proposition}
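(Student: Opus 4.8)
The plan is to reduce the statement to a claim about co-total morphisms and then verify that claim by a short diagram chase. First, recall that by \cref{cor.extremal_epis} a left adjoint $e$ is extremal epi in $\ladj(\rr)$ iff it is epi in $\rr$, and by (the dual of) \cref{prop.characterize_monos} a left adjoint is epi in $\rr$ iff it is co-total. Thus it suffices to show: the pullback in $\ladj(\rr)$ of a co-total left adjoint is again co-total. By \cref{cor.pullbacks} the pullback of left adjoints $g_1\colon r_1\ladjto t$ and $g_2\colon r_2\ladjto t$ is the tabulation $y_R\cp y_L$ of $y\coloneqq g_1\cp g_2\tp\colon r_1\to r_2$, with projections to $r_1$ and $r_2$ given by $y_R\tp$ and $y_L$ respectively; the projection that is the pullback of $g_1$ is $y_L\colon|y|\ladjto r_2$. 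So, assuming $g_1$ co-total, I must prove $y_L$ co-total.

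The key device is the following reformulation of co-totality, which I would isolate as a lemma: for any $h\colon c\to d$, $h$ is co-total iff $\eta_c\cp h=\eta_d$. This is immediate from \cref{prop.adj_vs_hom} together with the compact structure of \cref{rem.frob_means_freedom}: co-totality of $h$ is exactly totality of $h\tp$, which by the equivalence \textsc{(a2)}$\Leftrightarrow$\textsc{(h2)} says precisely that $h\tp$ is a counit homomorphism, i.e.\ $h\tp\cp\epsilon_c=\epsilon_d$; transposing this equation (using $\eta=\epsilon\tp$ and $(h\tp)\tp=h$) yields $\eta_c\cp h=\eta_d$, and every step is reversible.

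With the lemma in hand the computation is short. Since $g_1$ is co-total we have $\id_t\le g_1\tp\cp g_1$, so $y\tp\cp y=g_2\cp g_1\tp\cp g_1\cp g_2\tp\ge g_2\cp g_2\tp\ge\id_{r_2}$ (the last inequality by totality of the left adjoint $g_2$); hence $y$ is co-total and the lemma gives $\eta_{r_1}\cp y=\eta_{r_2}$. Next, $y_R$ is a right adjoint, so by \cref{cor.leftadjs} it is right adjoint to $y_R\tp$, and the unit $\id_{|y|}\le y_R\tp\cp y_R$ of that adjunction is exactly co-totality of $y_R$; the lemma then gives $\eta_{r_1}\cp y_R=\eta_{|y|}$. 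Finally, using the tabulation factorization $y=y_R\cp y_L$,
\[
  \eta_{r_2}=\eta_{r_1}\cp y=(\eta_{r_1}\cp y_R)\cp y_L=\eta_{|y|}\cp y_L,
\]
so the lemma, in its reverse direction, shows $y_L$ is co-total, as required.

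The step I expect to be the main obstacle is making co-totality descend from $y$ to its left-adjoint factor $y_L$. The naive move---expanding $y\tp\cp y=y_L\tp\cp(y_R\tp\cp y_R)\cp y_L$---fails, because $y_R\tp\cp y_R\ge\id$ pushes the inequality the wrong way and only bounds $y_L\tp\cp y_L$ from above; forcing it through directly would seem to demand the modular law, which is not among the results established in the excerpt. The $\eta$-reformulation of co-totality is precisely what circumvents this: it converts the ``surjectivity'' of $y$ into an equation between maps $I\to r_2$ that factors cleanly through the tabulation, avoiding any appeal to modularity. I would therefore concentrate the care on establishing and correctly orienting that lemma; the remainder is bookkeeping with adjunctions already available above.
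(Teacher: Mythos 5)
Your proof is correct and takes essentially the same route as the paper's: both reduce, via \cref{cor.extremal_epis} and the tabulation description of pullbacks (\cref{cor.pullbacks}), to checking that the pulled-back projection $y_L$ is a unit homomorphism, i.e.\ $\eta_{|y|}\cp y_L=\eta_{r_2}$, by peeling $\eta$ through the factorization $y=y_R\cp y_L$. The only cosmetic difference is that the paper obtains $\eta_{r_1}\cp y=\eta_{r_2}$ directly by splitting $y=g_1\cp g_2\tp$ and using the unit- and counit-homomorphism properties of $g_1$ and $g_2$, whereas you first establish co-totality of $y$ from the adjunction inequalities and then convert via the \textsc{(a4)}$\Leftrightarrow$\textsc{(h4)} equivalence of \cref{prop.adj_vs_hom}.
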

\begin{proof}
  Suppose $e$ is an extremal epi in $\ladj(\rr)$, and that pulling back along $f$ gives left adjoints $e'$ and $f'$ such that $f' \cp e = e' \cp f$. We simply use the facts that left adjoints are counit homomorphisms, and that for left adjoints, the extremal epis are precisely the unit homomorphisms (\cref{cor.extremal_epis}):
  \[
  \begin{tikzpicture}[baseline=(p1)]
  	\node (p1) {
		\begin{tikzpicture}[inner WD]
			\node[funcr] (m) {$e'$};
			\node[link, left=.5 of m] (link) {};
			\draw (m.west) -- (link);
			\draw (m) -- +(10pt,0);
		\end{tikzpicture}
		};
		\node (p2) [right=.5 of p1] {
		\begin{tikzpicture}[inner WD]
			\node[funcl] (f1) {$f'$};
			\node[funcr, right=.75 of f1] (f2) {$e'$};
			\node[link, left=.5 of f1] (link) {};
      \draw (f2) -- +(10pt,0);
      \draw (f1) -- (f2);
			\draw (f1.west) -- (link);
		\end{tikzpicture}		
		};
		\node (p3) [right=.5 of p2] {
		\begin{tikzpicture}[inner WD]
			\node[funcr] (f1) {$e$};
			\node[funcl, right=.75 of f1] (f2) {$f$};
			\node[link, left=.5 of f1] (link) {};
			\draw (f2) -- +(10pt,0);
      \draw (f1) -- (f2);
			\draw (f1.west) -- (link);
		\end{tikzpicture}		
		};
		\node (p4) [right=.5 of p3] {
		\begin{tikzpicture}[inner WD]
			\node[funcl] (m) {$f$};
			\node[link, left=.5 of m] (link) {};
			\draw (m.west) -- (link);
			\draw (m) -- +(10pt,0);
		\end{tikzpicture}		
		};
		\node (p5) [right=.5 of p4] {
		\begin{tikzpicture}[inner WD]
			\node[link] (link) {};
			\draw (link) -- +(10pt,0);
		\end{tikzpicture}		
		};
    \node at ($(p1.east)!.5!(p2.west)$) {$=$};
		\node at ($(p2.east)!.5!(p3.west)$) {$=$};
		\node at ($(p3.east)!.5!(p4.west)$) {$=$};
		\node at ($(p4.east)!.5!(p5.west)$) {$=$};
  \end{tikzpicture}
  \qedhere
  \]
\end{proof}

\begin{proposition}\label{prop.ladj_rel_reg}
  If $\rr$ is a relational po-category then $\ladj(\rr)$ is a regular category.
\end{proposition}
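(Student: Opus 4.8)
The plan is to verify the three defining conditions of a regular category from \cref{def.regcat} for $\ladj(\rr)$, most of which have already been assembled in the preceding results. Conditions (ii) and (iii) are immediate: \cref{prop.factorization} shows every morphism in $\ladj(\rr)$ factors as an extremal epi followed by a mono, giving image factorizations, and \cref{prop.stable_epis} shows that extremal epis are stable under pullback. So the real work is condition (i), that $\ladj(\rr)$ has all finite limits.

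For finite limits I would invoke the standard fact that a category with a terminal object and all pullbacks has all finite limits (binary products are pullbacks over the terminal object, and equalizers arise as pullbacks of a pair of maps against a diagonal). Pullbacks in $\ladj(\rr)$ are already provided by \cref{cor.pullbacks}, which realizes the pullback of any cospan $g_1,g_2$ of left adjoints as the tabulation of $g_1\cp g_2\tp$. Thus it remains only to exhibit a terminal object.

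I claim the monoidal unit $I$ is terminal in $\ladj(\rr)$. First, the counit $\epsilon_r\colon r\to I$ is a left adjoint: it is the image under the supply functor $s_r\colon\ww\to\rr$ of the morphism $\epsilon\colon 1\to 0$, which is a left adjoint in $\ww$ by \cref{prop.left_adj_in_ww}, and po-functors preserve left adjoints by \cref{lemma.funs_ladjs}. This gives a morphism $r\to I$ in $\ladj(\rr)$. For uniqueness, let $f\colon r\to I$ be any left adjoint; by \cref{cor.leftadjs} it is a comonoid homomorphism, in particular a counit homomorphism, so $f\cp\epsilon_I=\epsilon_r$. Since the counit supplied to the monoidal unit is $\epsilon_I=\id_I$ (a supply axiom, cf.\ \cref{eqn.supply_commute_tensors} and the argument in the proof of \cref{prop.prerel_comon_unique}), we conclude $f=\epsilon_r$. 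Hence $\epsilon_r$ is the unique morphism $r\to I$ in $\ladj(\rr)$, so $I$ is terminal.

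Combining the terminal object with the pullbacks of \cref{cor.pullbacks} yields all finite limits, completing condition (i); together with (ii) and (iii) this shows $\ladj(\rr)$ is regular. The only step requiring genuine input beyond citing earlier results is the identification of the terminal object, and there the main subtlety is the uniqueness argument, which rests on the fact that any left adjoint into $I$ is forced to equal $\epsilon_r$ because $\epsilon_I=\id_I$. I expect no further obstacle, since pullbacks, factorization, and stability are handed to us directly by \cref{cor.pullbacks,prop.factorization,prop.stable_epis}.
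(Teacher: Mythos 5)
Your proof is correct and follows essentially the same route as the paper's: the monoidal unit is terminal because left adjoints are counit homomorphisms and $\epsilon_I=\id_I$, pullbacks come from \cref{cor.pullbacks}, and factorization and pullback-stability come from \cref{prop.factorization,prop.stable_epis}. You simply spell out in more detail the existence and uniqueness argument for the terminal object, which the paper compresses into one clause.
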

\begin{proof}
  The monoidal unit in $\rr$ is terminal in $\ladj(\rr)$ because every morphism is a counit homomorphism, and $\ladj(\rr)$ has pullbacks by \cref{cor.pullbacks}; thus $\ladj(\rr)$ has finite limits. Every morphism factors into an extremal epi followed by a mono by \cref{prop.factorization}, and extremal epis are stable under pullback by \cref{prop.stable_epis}. Thus $\ladj(\rr)$ is regular.
\end{proof}

\begin{remark}
  By \cref{prop.fox} one sees that the finite products in $\ladj(\rr)$ are given by the monoidal structure $I,\otimes$ in $\rr$. In particular for any object $r$, the unique map $r \to I$ is given by $\epsilon_r$ and similarly the projection map $r\otimes s\to r$ is given by $\id_r\otimes\epsilon_s$. The diagonal is given by the comultiplication $\delta$.
\end{remark}

\section{$\ladj$ is functorial}\label{sec.extend_ladj_fun}

We showed in \cref{prop.ladj_rel_reg} that the category of left adjoints in a relational po-category $\rr$ is regular. We now show this extends to a 2-functor $\ladj\colon \rrlcat \to \rrgcat$.

Recall from \cref{def.reg2cat} that we defined a morphism $F\colon\rr\to\rr'$ of relational po-categories simply to be a strong monoidal po-functor. One might suspect that a morphism should also be required to preserve the supply of wirings (see \cref{def.preserve_supply}) as well as the tabulations. We are now in a position to show that this is automatic.

\begin{proposition} \label{prop.relational_pofunctors}
If $\rr$ and $\rr'$ are prerelational po-categories and $F\colon\rr\to\rr'$ is any strong symmetric monoidal functor, then $F$ automatically preserves the supply of $\ww$. If moreover $\rr$ and $\rr'$ are relational then $F$ preserves tabulations.
\end{proposition}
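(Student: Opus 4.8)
The plan is to reduce ``$F$ preserves the supply of $\ww$'' to the single statement that $F$ carries the supplied comonoid on each object $c$ to the supplied comonoid on $Fc$, and then to let the uniqueness of left adjoint comonoids (\cref{prop.prerel_comon_unique}) do the work. Since the morphisms of $\ww$ are generated, under composition, monoidal product, and symmetry, by the four maps $\eta,\mu,\epsilon,\delta$ of \cref{eqn.generating_wires}, and since both supplies and $F$ are symmetric monoidal, the class of $\mu\colon m\to n$ for which the square \cref{eqn.unpack_preserve_supply} commutes is closed under these operations and contains identities and symmetries; hence it suffices to verify preservation on the four generators.

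First I would treat $\delta$ and $\epsilon$. Because $\delta\colon 1\to 2$ and $\epsilon\colon 1\to 0$ are left adjoints in $\ww$ (\cref{prop.left_adj_in_ww}) and each supply functor is a monoidal po-functor, the supplied maps $\delta_c,\epsilon_c$ are left adjoints (\cref{lemma.funs_ladjs}), and $(c,\delta_c,\epsilon_c)$ is a cocommutative comonoid. Applying the strong monoidal po-functor $F$: it preserves left adjoints (\cref{lemma.funs_ladjs}) and its strongators $\varphi$ are isomorphisms, so the composites $F(\delta_c)\cp\varphi\inv$ and $F(\epsilon_c)\cp\varphi\inv$ are again left adjoints (\cref{prop.ladj_smc}); moreover, being the image of a comonoid under a strong monoidal functor, they equip $Fc$ with a comonoid structure. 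By \cref{prop.prerel_comon_unique}, this comonoid must be the supplied one, i.e.\ $F(\delta_c)\cp\varphi\inv=\delta_{Fc}$ and $F(\epsilon_c)\cp\varphi\inv=\epsilon_{Fc}$, which is exactly the commutativity of \cref{eqn.unpack_preserve_supply} for $\mu=\delta$ and $\mu=\epsilon$. For the monoid generators, recall $\epsilon\dashv\eta$ and $\delta\dashv\mu$ in $\ww$ (\cref{prop.left_adj_in_ww}); since $F$ preserves right adjoints and right adjoints in a po-category are unique, $F$ necessarily sends $\eta_c$ and $\mu_c$ to the unique right adjoints $\eta_{Fc}$ and $\mu_{Fc}$ of $\epsilon_{Fc}$ and $\delta_{Fc}$. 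This establishes the first claim.

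For the second claim, suppose $\rr,\rr'$ are relational and let $(f_R,f_L)$ be a tabulation of $f$, with $f_R\colon r\to|f|$ a right adjoint and $f_L\colon|f|\to s$ a left adjoint. Since $F$ preserves left and right adjoints (\cref{lemma.funs_ladjs}), $F(f_R)$ is a right adjoint and $F(f_L)$ is a left adjoint, giving conditions (i)--(ii) of \cref{def.tabulation} for the factorization $F(f)=F(f_R)\cp F(f_L)$ through $F|f|$. For condition (iii), the key point is that $F$ now preserves the entire supplied Frobenius structure of $\ww$: having shown it preserves $\eta,\mu,\epsilon,\delta$, it preserves the cup and cap (\cref{eqn.cap_cup}) and hence transposes (\cref{rem.frob_means_freedom}), so $F(g\tp)=F(g)\tp$ for every $g$ (modulo the coherent strongator isomorphisms). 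Consequently $F$ carries $\hat f=\delta_{|f|}\cp(f_L\otimes f_R\tp)$ to $\delta_{F|f|}\cp(F(f_L)\otimes F(f_R)\tp)=\widehat{F(f)}$. Applying $F$ to the identity $\hat f\cp\hat f\tp=\id_{|f|}$ of \cref{eqn.tabulation} and using preservation of transposes then yields $\widehat{F(f)}\cp\widehat{F(f)}\tp=\id_{F|f|}$, which is condition (iii) for $F(f)$. Hence $(F(f_R),F(f_L))$ is a tabulation of $F(f)$.

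I expect the first part to carry the real weight: the crux is recognizing that preservation of the supply is not extra data to be verified by hand but is \emph{forced}, via \cref{prop.prerel_comon_unique}, by the mere facts that $\delta_c,\epsilon_c$ are left adjoints and that $F$ is strong monoidal and preserves adjoints. Once that is in place, the tabulation statement is essentially bookkeeping: preservation of adjoints gives (i)--(ii), and preservation of the supplied structure (hence of transposes and of $\hat f$) gives (iii). The only mild subtlety to track is the consistent insertion of the strongators $\varphi$; since these are coherent isomorphisms, they do not affect any of the equalities.
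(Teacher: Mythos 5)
Your proof is correct and follows essentially the same route as the paper: reduce preservation of the supply to the four generators, force $F(\delta_c)\cp\varphi\inv=\delta_{Fc}$ and $F(\epsilon_c)\cp\varphi\inv=\epsilon_{Fc}$ via \cref{prop.prerel_comon_unique}, recover $\eta,\mu$ by uniqueness of right adjoints, and observe that tabulations are preserved because they are defined equationally in terms of structure $F$ now preserves. The only difference is that you spell out the tabulation step (preservation of adjoints for (i)--(ii), of transposes for (iii)) where the paper simply calls it obvious.
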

\begin{proof}
To see $F$ preserves the supply of $\ww$, let $\varphi$ denote the strongator isomorphisms for $F$. By \cref{eqn.generating_wires,eqn.unpack_preserve_supply}, it suffices to show that the following diagrams commute for each $r\in\rr$:
\[
	\begin{tikzcd}[column sep=24pt]
		I\ar[r, "\eta_{F(r)}"]\ar[from=d, "\varphi\inv"]&
		F(r)\ar[d, equal]\\
		F(I)\ar[r, "F(\eta_r)"']&
		F(r)
  \end{tikzcd}
\hspace{.25in}
	\begin{tikzcd}[column sep=24pt]
		F(r)\tpow{2}\ar[r, "\mu_{F(r)}"]\ar[from=d, "\varphi\inv"]&
		F(r)\ar[d, equal]\\
		F(r\tpow{2})\ar[r, "F(\mu_r)"']&
		F(r)
  \end{tikzcd}
\hspace{.25in}
	\begin{tikzcd}[column sep=24pt]
		F(r)\ar[d, equal]\ar[r, "\epsilon_{F(r)}"]&
		I\ar[from=d, "\varphi\inv"']\\
		F(r)\ar[r, "F(\epsilon_r)"']&
		F(I)
  \end{tikzcd}
\hspace{.25in}
	\begin{tikzcd}[column sep=24pt]
		F(r)\ar[d, equal]\ar[r, "\delta_{F(r)}"]&
		F(r)\tpow{2}\ar[from=d, "\varphi\inv"']\\
		F(r)\ar[r, "F(\delta_r)"']&
		F(r\tpow{2})
  \end{tikzcd}
\]
Since $\delta$ and $\epsilon$ are left adjoint to $\mu$ and $\eta$, and $F$ preserves adjunctions, it suffices to show the commutativity of either pair. Since isomorphisms are left adjoints, $\epsilon\coloneqq\varphi\inv\cp F(\epsilon_r)$ and $\delta\coloneqq\varphi\inv\cp\delta_{F(r)}$ are left adjoints, and $(\epsilon,\delta)$ also form a comonoid structure on $r$ in the usual way (strong monoidal functors send comonoids to comonoids). Hence we are done by the uniqueness of left adjoint comonoids (\cref{prop.prerel_comon_unique}).

The second claim is obvious: tabulations are defined equationally, so $F$ must preserve them.
\end{proof}

\begin{proposition}\label{prop.ladj_morphisms}
If $F\colon\rr\to\ccat{S}$ is a relational po-functor, then $\ladj(F)\colon\ladj(\rr)\to\ladj(\ccat{S})$ is a regular functor.
\end{proposition}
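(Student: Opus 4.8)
The plan is to verify the two conditions in the definition of a regular functor (\cref{def.regcat}): that $\ladj(F)$ preserves finite limits and extremal epimorphisms. By \cref{prop.ladj_smc} we already know that $\ladj(F)$ is a well-defined symmetric monoidal functor between the categories $\ladj(\rr)$ and $\ladj(\ccat{S})$, which are regular by \cref{prop.ladj_rel_reg}; so only these two preservation properties remain to be checked. Throughout I will lean on \cref{prop.relational_pofunctors}, which tells us that $F$ automatically preserves both the supply of $\ww$ and all tabulations.

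For finite limits it suffices to show that $\ladj(F)$ preserves the terminal object and pullbacks. The terminal object of $\ladj(\rr)$ is the monoidal unit $I$, and since $F$ is strong monoidal its strongator exhibits $F(I)\cong I$ in $\ccat{S}$; hence $\ladj(F)$ preserves terminal objects. For pullbacks, recall from \cref{cor.pullbacks} that the pullback of left adjoints $g_1\colon r_1\ladjto t$ and $g_2\colon r_2\ladjto t$ is computed as the tabulation of $g_1\cp g_2\tp$. Because $F$ preserves the supply of $\ww$, it preserves the cup and cap and hence the transpose (see \cref{rem.frob_means_freedom,cor.leftadjs}), so that $F(g_1\cp g_2\tp)=F(g_1)\cp F(g_2)\tp$ up to strongators; and because $F$ preserves tabulations, it carries the tabulation of $g_1\cp g_2\tp$ to the tabulation of $F(g_1)\cp F(g_2)\tp$. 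By \cref{cor.pullbacks} again, this is exactly the pullback of $F(g_1)$ and $F(g_2)$ in $\ladj(\ccat{S})$. Together with preservation of the terminal object, this yields all finite limits.

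For extremal epimorphisms the key is the equational characterization established earlier: by \cref{cor.extremal_epis} a left adjoint $e$ is extremal epi in $\ladj(\rr)$ iff it is epi in $\rr$, and by \cref{prop.characterize_monos,prop.adj_vs_hom} this holds iff $e$ is a unit homomorphism, i.e.\ iff $\eta_r\cp e=\eta_s$ (property \textsc{h4}). This is a single equation between morphisms built from the supply generators of $\ww$, so it is preserved by any functor that preserves that supply and respects composition. Since $F$ does exactly this, $\eta_r\cp e=\eta_s$ implies $\eta_{Fr}\cp F(e)=\eta_{Fs}$, so $\ladj(F)(e)=F(e)$ is again a unit homomorphism and hence extremal epi in $\ladj(\ccat{S})$.

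The one place requiring genuine care---the main obstacle---is the coherence bookkeeping in the pullback step: one must check that $F$'s preservation of the transpose and of tabulations meshes correctly with the strongator isomorphisms, so that the tabulation of $F(g_1)\cp F(g_2)\tp$ really is the image under $\ladj(F)$ of the tabulation of $g_1\cp g_2\tp$. This is routine given \cref{prop.relational_pofunctors}, but it is the single step where the strong monoidal coherence data genuinely enters. Once it is dispatched, both regularity conditions hold and the proof concludes.
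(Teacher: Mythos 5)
Your proposal is correct and follows essentially the same route as the paper's proof: well-definedness from \cref{prop.ladj_smc}, terminal objects from strong monoidality, pullbacks via \cref{cor.pullbacks} together with preservation of tabulations from \cref{prop.relational_pofunctors}, and extremal epis via the unit-homomorphism characterization of \cref{cor.extremal_epis} plus preservation of the supply. The only difference is that you spell out the strongator/transpose bookkeeping that the paper leaves implicit, which is a harmless elaboration rather than a different argument.
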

\begin{proof}
  $\ladj(F)$ is a well-defined functor with codomain $\ladj(\ccat{S})$; see \cref{prop.ladj_smc}. To show it is regular, we must show it preserves finite limits and extremal epis. Since $F$ is strong and the monoidal units in $\rr$ and $\ccat{S}$ are terminal, $\ladj(F)$ sends terminal objects to terminal objects. \cref{cor.pullbacks} says that pullbacks are given by tabulations, and $F$ preserves tabulations by \cref{prop.relational_pofunctors}. Finally, \cref{cor.extremal_epis} shows that extremal epis are given by unit homomorphisms, and since $F$ preserves the supply by \cref{prop.relational_pofunctors}, it sends unit homomorphisms to unit homomorphisms.
\end{proof}

We recall that a 2-morphism in $\rrlcat$ is a left adjoint natural transformation $\alpha\colon F \Rightarrow G$ between strong monoidal functors $F,G \colon \rr \to \ccat{S}$. Explicitly (see \cref{prop.adj_in_pocat}) it is a natural transformation such that each component map $\alpha_r\colon Fr\to Gr$ is a left adjoint in $\ccat{S}$.

\begin{proposition}\label{prop.ladj_mmorphisms}
Every left adjoint natural transformation $\alpha\colon F\Rightarrow G$ induces a natural transformation $\alpha\colon \ladj(F)\Rightarrow\ladj(G)$ between regular functors $\ladj(\rr)\to\ladj(\ccat{S})$.
\end{proposition}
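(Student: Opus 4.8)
The plan is to recognize this as essentially an instance of the last clause of \cref{prop.ladj_smc}, now read in the relational setting where the induced functors $\ladj(F)$ and $\ladj(G)$ happen to be regular by \cref{prop.ladj_morphisms}. Since the objects of $\ladj(\rr)$ and $\ladj(\ccat{S})$ coincide with those of $\rr$ and $\ccat{S}$, I first take the components of the candidate transformation to be the very same component maps $\alpha_r\colon Fr \to Gr$ of $\alpha$. By hypothesis $\alpha$ is a left adjoint natural transformation, so by \cref{prop.adj_in_pocat} each $\alpha_r$ is a left adjoint in $\ccat{S}$; hence each $\alpha_r$ is a bona fide morphism of $\ladj(\ccat{S})$, and we thereby obtain a well-typed family of components indexed by the objects of $\ladj(\rr)$.

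It then remains to verify naturality in $\ladj(\ccat{S})$. Let $h\colon r \to r'$ be a morphism of $\ladj(\rr)$, i.e.\ a left adjoint in $\rr$. Then $\ladj(F)(h)=F(h)$ and $\ladj(G)(h)=G(h)$ are left adjoints in $\ccat{S}$ by \cref{lemma.funs_ladjs}, so all four edges of the putative naturality square---namely $F(h)$, $G(h)$, $\alpha_r$, and $\alpha_{r'}$---lie in $\ladj(\ccat{S})$. The required equation $F(h)\cp\alpha_{r'}=\alpha_r\cp G(h)$ is exactly the naturality square of $\alpha\colon F\tto G$ evaluated at $h$, which holds in $\ccat{S}$. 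Because $\ladj(\ccat{S})\ss\ccat{S}$ is a wide subcategory (\cref{prop.ladj_smc}), an equation between morphisms of $\ladj(\ccat{S})$ holds there iff it holds in $\ccat{S}$; so the square commutes in $\ladj(\ccat{S})$, as needed.

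There is essentially no obstacle here: the only inputs are that the components are left adjoints (immediate from the hypothesis via \cref{prop.adj_in_pocat}) and that naturality is simply inherited from $\alpha$. The one point worth stating explicitly is that no separate monoidality condition must be checked, since in $\rrgcat$ the 2-cells are merely natural transformations---and these are automatically monoidal by \cref{prop.cartesian_nat_trans}. Thus $\alpha$ descends to a natural transformation $\ladj(F)\Rightarrow\ladj(G)$ between the regular functors $\ladj(\rr)\to\ladj(\ccat{S})$, as claimed.
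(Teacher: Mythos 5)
Your proof is correct and matches the paper's: the paper simply cites \cref{prop.ladj_smc}, whose proof consists of exactly the two observations you spell out (components are left adjoints by \cref{prop.adj_in_pocat}, and naturality is inherited from $\alpha$). Your closing remark about monoidality being automatic is a harmless extra, consistent with the paper's footnote to \cref{def.regcat}.
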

\begin{proof}
  This follows directly from \cref{prop.ladj_smc}.
\end{proof}

\begin{theorem}\label{thm.ladj}
There is a 2-functor $\ladj\colon\rrlcat\to\rrgcat$.
\end{theorem}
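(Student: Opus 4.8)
The plan is to simply assemble the three constructions already carried out in the preceding propositions and then verify that they cohere into a strict 2-functor, mirroring the proof of \cref{thm.rrel} for the forward direction. Concretely, I would declare the action of $\ladj$ on objects to be the passage from a relational po-category $\rr$ to its category of left adjoints $\ladj(\rr)$, which is regular by \cref{prop.ladj_rel_reg}; its action on a 1-morphism $F$ to be $\ladj(F)$, which is a regular functor by \cref{prop.ladj_morphisms}; and its action on a 2-morphism $\alpha$ to be the induced natural transformation of \cref{prop.ladj_mmorphisms}. These three results provide exactly the data of a prospective 2-functor, and the body of the proof is essentially a citation of them.

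The remaining work is to check the strictness conditions of a 2-functor: preservation of identity 1-cells and 2-cells, and of both vertical and horizontal composition. I expect this to be routine rather than substantive, because the construction $\ladj$ is by definition the restriction of the given po-functors and natural transformations to the wide subcategories of left adjoints, and all of these conditions were already recorded at the level of bare symmetric monoidal po-categories in \cref{prop.ladj_smc}. In particular $\ladj(\id_\rr)=\id_{\ladj(\rr)}$ and $\ladj(F\cp G)=\ladj(F)\cp\ladj(G)$ hold on the nose, since composition of po-functors restricts to composition of their left-adjoint parts; likewise identities and vertical composites of 2-cells are computed componentwise, and each component is merely a component of the original transformation restricted to $\ladj$, so preservation is inherited directly.

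The only genuine point to confirm is that every piece of output data lands in $\rrgcat$ rather than in the ambient 2-category of categories, functors, and natural transformations: that objects go to \emph{regular} categories, 1-cells to \emph{regular} functors, and 2-cells to natural transformations (which are automatically monoidal by \cref{prop.cartesian_nat_trans}, so no extra condition is needed). This is precisely what \cref{prop.ladj_rel_reg,prop.ladj_morphisms,prop.ladj_mmorphisms} already guarantee. I therefore anticipate no real obstacle: the theorem is a bookkeeping consequence of the earlier results, and the proof amounts to citing \cref{prop.ladj_rel_reg,prop.ladj_morphisms,prop.ladj_mmorphisms} for the three levels of data together with \cref{prop.ladj_smc} for the 2-functoriality axioms.
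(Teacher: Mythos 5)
Your proposal is correct and matches the paper's own proof, which likewise just assembles the object-, 1-morphism-, and 2-morphism-level constructions from \cref{prop.ladj_rel_reg,prop.ladj_morphisms,prop.ladj_mmorphisms}. Your additional remarks on strict functoriality via \cref{prop.ladj_smc} are accurate but are left implicit in the paper.
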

\begin{proof}
This 2-functor was given on objects, 1-morphisms, and 2-morphisms in \cref{prop.ladj_rel_reg,prop.ladj_morphisms,prop.ladj_mmorphisms} respectively.
\end{proof}

\chapter{$\mathbb{R}\Cat{el}$ and $\ladj$ form a 2-equivalence}\label{sec.prove_equiv}

In \cref{thm.rrel} we showed that there is a 2-functor $\rrel\colon\rrgcat\to\rrlcat$, and in \cref{thm.ladj} we showed that there is a 2-functor $\ladj$ in the opposite direction. Our goal in this chapter is to prove \cref{thm.eequivalence}, the equivalence of 2-categories
\[
\begin{tikzcd}[column sep=large]
	\rrgcat\ar[r, shift left, "\rrel"]&
	\rrlcat.\ar[l, shift left, "\ladj"]
\end{tikzcd}
\]

\begin{lemma}[Fundamental lemma of regular categories]\label{lemma.fundamental}
Let $\cat{R}$ be a regular category. Then there is a natural identity-on-objects isomorphism of categories
\[\iota_{\cat{R}}\colon\cat{R}\to\ladj(\rrel(\cat{R})).\]
In particular, a relation $x\colon r\tickar s$ is a left adjoint iff it is the graph $x=\pair{\id_r,f}$ of a morphism $f\colon r\to s$ in $\cat{R}$.
\end{lemma}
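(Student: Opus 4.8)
The plan is to exhibit $\iota_{\cat R}$ as a fully faithful, identity-on-objects functor and then verify naturality, so that it is automatically an isomorphism of categories. First I would take $\iota_{\cat R}$ to be the identity-on-objects monoidal functor $\inc\colon\cat R\to\rrel(\cat R)$, $f\mapsto\pair{\id_r,f}$, corestricted to the wide subcategory $\ladj(\rrel(\cat R))\ss\rrel(\cat R)$. This corestriction is legitimate because every graph is a left adjoint by \cref{rem.rels_adjoint_composites}, and it is functorial since $\inc$ is a functor and composites of left adjoints are left adjoints (\cref{prop.ladj_smc}). Faithfulness is then immediate: a morphism of jointly-monic spans $\pair{\id_r,f}\to\pair{\id_r,g}$ is a map $k\colon r\to r$ with $k\cp\id_r=\id_r$ and $k\cp g=f$, which forces $k=\id_r$ and hence $f=g$.

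The crux is fullness, i.e.\ the ``only if'' half of the displayed characterization: every left adjoint $x\colon r\tickar s$ must be a graph. I would represent $x$ by a jointly-monic span $r\From{a}w\To{b}s$, and use \cref{cor.leftadjs} to turn ``$x$ is a left adjoint'' into ``$x$ is total and deterministic''. For totality I can route cleanly through \cref{prop.adj_vs_hom}: the implication $\textsc{a2}\Rightarrow\textsc{h2}$ gives $x\cp\epsilon_s=\epsilon_r$, and computing this composite of relations identifies $x\cp\epsilon_s$ with the support $\im(a)\ss r$ while $\epsilon_r$ is all of $r$; hence $\im(a)=r$ and $a$ is an extremal epimorphism. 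For determinism I would unwind $x\tp\cp x\le\id_s$ against the kernel pair $w\times_r w$ of $a$: writing $q_1,q_2$ for its projections, the composite $x\tp\cp x$ is the image of $\pair{q_1\cp b,\,q_2\cp b}$, and its containment in the diagonal forces $q_1\cp b=q_2\cp b$; combined with $q_1\cp a=q_2\cp a$ and the joint monicity of $\pair{a,b}$ this gives $q_1=q_2$, so $a$ is monic. An extremal epimorphism that is monic is an isomorphism (by the definition of extremal epimorphism, taking the factorization $a=\id_w\cp a$), so $a$ is invertible and $x\iso\pair{\id_r,a\inv\cp b}$ is the graph of $a\inv\cp b$, as required.

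Finally, naturality in $\cat R$ is a one-line computation: for a regular functor $F\colon\cat R\to\cat R'$, the functor $\ladj(\rrel(F))$ restricts $\rrel(F)$, which sends $\pair{\id_r,f}$ to $\pair{\id_{Fr},Ff}$, so $\ladj(\rrel(F))\cp\iota_{\cat R}=\iota_{\cat R'}\cp F$. Being identity-on-objects, faithful, and full then makes each $\iota_{\cat R}$ an isomorphism of categories. I expect the main obstacle to be the fullness step, and within it the determinism-to-monic translation: whereas totality reduces painlessly to the counit-homomorphism reformulation of \cref{prop.adj_vs_hom}, determinism appears to need the composite of relations unwound explicitly (pullback then image) and joint monicity played off against the kernel pair of $a$. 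Keeping that relation-composition bookkeeping straight is the one place that demands genuine care.
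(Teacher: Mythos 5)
Your proof is correct, but the fullness step takes a genuinely different route from the paper's. The paper argues entirely ``by hand'': given an arbitrary adjunction $x\dashv x'$ in $\rrel(\cat{R})$ with unit $i$ and counit $j$, it runs a diagram chase in $\cat{R}$ (using joint monicity and the two pullbacks defining $x\cp x'$ and $x'\cp x$) to show that the left legs of both spans are invertible, which simultaneously identifies $x$ as a graph and $x'$ as the corresponding cograph; this uses nothing beyond the definition of adjunction and of relation composition. You instead route through the abstract characterization of left adjoints in a prerelational po-category (\cref{cor.leftadjs}: left adjoint $\Leftrightarrow$ total and deterministic), and then translate each condition back into a statement about the representing span $r\From{a}w\To{b}s$: totality, via the counit-homomorphism reformulation, says $\im(a)=r$, so $a$ is extremal epi; determinism, unwound against the kernel pair of $a$ and played off against joint monicity, says $a$ is monic; hence $a$ is iso and $x\iso\pair{\id_r,a\inv\cp b}$. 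This is legitimate---$\rrel(\cat{R})$ is already known to be prerelational by \cref{cor.rrel_on_obs}, so there is no circularity---and it is arguably the more conceptual argument, since it reuses the machinery of \cref{prop.adj_vs_hom} rather than redoing an ad hoc chase. What you give up is that the paper's chase also exhibits the right adjoint explicitly as the cograph (not needed for the statement), and that your version requires carefully unwinding the pullback-then-image composition of relations in two separate places, exactly as you flag. Both the faithfulness and naturality portions match the paper's treatment.
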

\begin{proof}
This fact is well known, but we provide a proof here since it plays a central role. We shall show that there is an
identity-on-objects, fully faithful functor from $\cat{R}$ to its relations
po-category $\rrel(\cat{R})$, which sends a morphism $f\colon r \to s$ to its
graph $\iota_{\cat{R}}(f)\coloneqq \pair{\id_r,f} \ss r \times s$. Indeed, the pair $\pair{\id_r,f} \dashv \pair{f,\id_s}$ is an adjunction by \cref{rem.rels_adjoint_composites}, and it follows that $\iota_{\cat{R}}$ is functorial. This construction is 1-natural in $\cat{R}$, because any regular functor $F\colon\cat{R}\to\cat{S}$ sends graphs to graphs; it is easy to check that it is also 2-natural.

To show that $\iota_{\cat{R}}$ is fully faithful, we characterize the adjunctions $x \dashv
x'$ in $\rrel(\cat{R})$. Suppose we have $\pair{g,f}\colon x \inj r \times
s$ and $\pair{f',g'}\colon x'\inj s\times r$ with unit $i\colon r\inj (x\cp x')$
and counit $j\colon (x'\cp x)\to s$. This gives rise to the following diagram
(equations shown right):
\[
\begin{tikzcd}[row sep=6pt, column sep=large]
	x\times_s x'\ar[rr, "\pi_s'"]\ar[ddd, "\pi_s"']&&
	x'\ar[dl, pos=.6, "g'"']\ar[ddl, "f'"]\\&
	r\ar[ul, "i"]\\&
	s\\
	x\ar[uur, "g"]\ar[ur, pos=.6, "f"']&&
	x\times_r x'\ar[uuu, "\pi_r'"']\ar[ll, "\pi_r"]\ar[ul, "j"]
\end{tikzcd}
\hspace{.5in}
\parbox{1.5in}{$
i\cp\pi_s\cp g=\id_r=i\cp\pi_s'\cp g'\\
\pi_r\cp f=j=\pi_r'\cp f'
$}
\]
We shall show that $g$ and $g'$ are isomorphisms, and that $f'=g' \cp g\inv \cp
f$. 

We first show that $i\cp \pi_s$ is inverse to $g$. Since the unit already gives
that $i \cp \pi_s \cp g = \id_r$, it suffices to show that $g\cp i\cp\pi_s=\id_x$.
Moreover, since $\pair{g,f}\colon x\to r\times s$ is monic and $g=(g\cp
i\cp\pi_s)\cp g$, it suffices to show that $f=(g\cp i\cp\pi_s)\cp f$. This is a diagram chase: since $g=g\cp i\cp\pi_s'\cp g'$, we can define a map $q\coloneqq\pair{\id_x,(g\cp i\cp\pi_s')}\colon x\to x\times_r x'$, and we conclude
\[
	f=q\cp \pi_r\cp f
	=q\cp \pi_r'\cp f'
	=g\cp i\cp\pi_s'\cp f'
	=g\cp i\cp\pi_s\cp f.
\]
Similarly, we see that $i\cp \pi_s'$ is inverse to $g'$, and hence obtain $f'=g' \cp g\inv \cp f$.

Note that this implies the adjunction $x \dashv x'$ is isomorphic to the
adjunction $\pair{\id_r,(g\inv \cp f)} \dashv \pair{(g \inv \cp f),\id_s}$. Thus the
proposed functor is full. Faithfulness amounts to the fact that the existence of
a morphism $\pair{\id_r,f} \to \pair{\id_r,f'}$ implies $f =f'$. This proves the
lemma.
\end{proof}

We showed in \cref{lemma.fundamental} that there is a natural isomorphism $\iota\colon\id_{\rrgcat}\to(\ladj\circ\rrel)$. We need to show the other round-trip.

\begin{proposition}\label{lemma.fundamental_po}
For any relational po-category $\rr$, there is a natural identity-on-objects
isomorphism of monoidal po-categories $j_\rr\colon\rr\cong\rrel(\ladj(\rr))$.
\end{proposition}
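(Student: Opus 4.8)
The plan is to build $j_\rr$ directly from tabulations, which exist for every morphism of $\rr$ by \cref{def.reg2cat}. On objects $j_\rr$ is the identity, since $\rr$, its wide subcategory $\ladj(\rr)$, and $\rrel(\ladj(\rr))$ (relations being identity-on-objects) all share the same objects. Given $f\colon r\to s$ in $\rr$, I would choose a tabulation $f=f_R\cp f_L$ and send it to the span $r\From{f_R\tp}|f|\To{f_L}s$. By \cref{cor.leftadjs} both legs $f_R\tp$ and $f_L$ are left adjoints, hence morphisms of $\ladj(\rr)$, and the associated map $\hat f$ is monic in $\rr$ by \cref{prop.condition3}; since for left adjoints monic-in-$\rr$ and monic-in-$\ladj(\rr)$ coincide, the span is jointly monic in $\ladj(\rr)$ and so is a genuine relation. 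Independence of the chosen tabulation, and hence well-definedness of $j_\rr$, is exactly \cref{cor.tabulations_unique}.

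Next I would exhibit the inverse. A relation $r\From{a}x\To{b}s$ in $\rrel(\ladj(\rr))$ is a jointly-monic span of left adjoints, which I send to $a\tp\cp b\colon r\to s$ in $\rr$. These assignments are mutually inverse: starting from $f$ we recover $(f_R\tp)\tp\cp f_L=f_R\cp f_L=f$; conversely, given $(a,b)$, the factorization $a\tp\cp b$ is a right adjoint followed by a left adjoint whose associated span is the jointly-monic $\langle a,b\rangle$, so by \cref{prop.condition3} it is a tabulation of $a\tp\cp b$, and \cref{cor.tabulations_unique} identifies it with $j_\rr(a\tp\cp b)$. Thus $j_\rr$ is a bijection on hom-sets.

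Then I would check that $j_\rr$ is an isomorphism of posets, functorial, and monoidal. For the order, I would reduce $f\le f'$ to a comparison of the subobjects cut out by the tabulations: the span morphism $|f|\to|f'|$ witnessing $j_\rr(f)\le j_\rr(f')$ is produced by \cref{prop.restrict_codomain} applied to the inclusion of $|f'|$, while the converse implication follows from monotonicity of composition together with the counit inequality $k\tp\cp k\le\id$ for the (left-adjoint) span morphism $k$. Functoriality on identities is immediate, since the identity span tabulates $\id_r$; for composites the real content is that relational composition in $\rrel(\ladj(\rr))$, namely pullback followed by image, matches composition in $\rr$. Here I would use \cref{cor.pullbacks}, which computes the pullback of the middle legs as a tabulation, and \cref{prop.univ_ppty_tabulation}, which realizes the subsequent image factorization again as a tabulation; reassembling shows the resulting span tabulates the $\rr$-composite. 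Monoidality is then routine: by \cref{prop.fox} the products of $\ladj(\rr)$ are the restriction of $\otimes$, so the monoidal product of two relations agrees with the $\otimes$ of the corresponding tabulations.

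Finally, for naturality in $\rr$, given a relational po-functor $F\colon\rr\to\rr'$ I would verify $j_{\rr'}\circ F=\rrel(\ladj(F))\circ j_\rr$ by noting that $F$ preserves left adjoints and, by \cref{prop.relational_pofunctors}, preserves tabulations, so it sends the span $(f_R\tp,f_L)$ to the span $((Ff_R)\tp,Ff_L)$ that computes $\rrel(\ladj(F))$ on $j_\rr(f)$. I expect the single genuine obstacle to be the composition clause of functoriality: carefully reconciling the pullback-then-image description of relational composition in $\rrel(\ladj(\rr))$ with composition of the underlying morphisms of $\rr$, for which \cref{cor.pullbacks,prop.univ_ppty_tabulation} are the essential tools.
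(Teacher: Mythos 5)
Your proposal is correct and follows essentially the same route as the paper: $j_\rr$ is defined on morphisms via tabulations (well-defined by \cref{cor.tabulations_unique}, landing in jointly-monic spans of left adjoints by \cref{prop.condition3}), its inverse sends a span $(a,b)$ to $a\tp\cp b$, functoriality on composites is handled by \cref{cor.pullbacks} together with \cref{prop.univ_ppty_tabulation}, and naturality follows from \cref{prop.relational_pofunctors}. If anything you are slightly more explicit than the paper in checking that $j_\rr$ itself (not just its inverse) preserves the order, and note that only the easy direction of the mono comparison is needed (monic in $\rr$ trivially implies monic in the wide subcategory $\ladj(\rr)$).
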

\begin{proof}
The po-categories $\rr$ and $\rrel(\ladj(\rr))$ have the same set of objects and monoidal product by construction. On 1-morphisms, a tabulation of a morphism $f\colon r \to s$ in $\rr$ defines a jointly-monic span $\hat{f}$ of left adjoints as shown in \cref{prop.condition3}; it is unique up to isomorphism by \cref{cor.tabulations_unique} and hence defines a morphism $j_R(f)\colon r \tickar s$ in $\rrel(\ladj(\rr))$. Our first goal is to show that $j_R$ is functorial.

Given morphisms $f\colon r\to s$ and $g\colon s\to t$ in $\rr$, let $y=f\cp g$ be their composite, and let $(f_R, f_L)$ and $(g_R,g_L)$ be their tabulations. They form a composable pair of relations:
\begin{equation}\label{eqn.74}
\begin{tikzcd}
	r&
	{|f|}\ar[l, "f_R"']\ar[r, "f_L"]&
	s&
	{|g|}\ar[l, "g_R\tp"']\ar[r, "g_L"]&
	t
\end{tikzcd}
\end{equation}
and their composite in $\rrel(\ladj(\rr))$ is given by taking the pullback of the middle terms, composing with $f_R$ and $g_L$, and taking the image inclusion of the resulting span. By \cref{cor.pullbacks}, the pullback of middle terms is the tabulation $(x_R,x_L)$ of $f_L\cp g_R\tp$. Composing with $f_R$ and $g_L$, we have simply replaced \cref{eqn.74} by a chain of morphisms
\[
\begin{tikzcd}
	r&
	{|f|}\ar[l, "f_R"']\ar[from=r, "x_R"']&
	{|x|}&
	{|g|}\ar[from=l, "x_L\tp"]\ar[r, "g_R"]&
	t
\end{tikzcd}
\]
with the same composite, $y=(f_R\cp x_R)\cp (x_L\cp g_L)$. We want the image inclusion for the associated span to be $\hat{y}$, and it is by \cref{prop.univ_ppty_tabulation}. Note that $j$ is natural since functors $\rr\to\rr'$ preserve transposes and tabulations (see \cref{prop.relational_pofunctors}).

To define an identity-on-objects po-functor $j_\rr\inv\colon\rrel(\ladj(\rr))\to\rr$, we just need to say what it does on hom-posets. A relation $f\colon r_1 \tickar r_2$ in $\rrel(\ladj(\rr))$ is a jointly monic span $\hat{f}$ of left adjoints $f_1\colon s\to r_1$ and $f_2\colon s\to r_2$ for some $s\in\rr$. We send $f$ to the composite ${f_1}\tp \cp f_2$ in $\rr$. If $f\leq f'$ then \cref{prop.univ_ppty_tabulation} defines a map $h\colon s\to s'$ with $\hat{f}=h\cp\hat{f'}$, so this construction is monotonic.

It is clear that $j_\rr$ and $j_\rr\inv$ really are mutually inverse, completing the proof.
\end{proof}

\begin{theorem}\label{thm.eequivalence}
The 2-functors $\rrel\colon\rrgcat\leftrightarrows\rrlcat\cocolon\ladj$
form an equivalence of 2-categories. Their underlying 1-functors moreover form an equivalence between the underlying 1-categories.
\end{theorem}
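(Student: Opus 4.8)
The plan is to assemble the two round-trip isomorphisms already in hand into a pair of 2-natural isomorphisms and then invoke the formal criterion for a 2-equivalence; almost all of the genuine content has been discharged by \cref{lemma.fundamental,lemma.fundamental_po}. From \cref{lemma.fundamental} we already have an identity-on-objects natural isomorphism $\iota\colon\id_{\rrgcat}\tto(\ladj\circ\rrel)$ whose components $\iota_{\cat{R}}$ are isomorphisms of categories, and from \cref{lemma.fundamental_po} an identity-on-objects isomorphism $j_\rr\colon\rr\iso\rrel(\ladj(\rr))$ of monoidal po-categories for each relational $\rr$, assembling into $j\colon\id_{\rrlcat}\tto(\rrel\circ\ladj)$.

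First I would confirm that $\iota$ and $j$ are \emph{2-}natural, i.e.\ natural with respect to 2-cells and not merely 1-cells. For $\iota$ this was already claimed in \cref{lemma.fundamental}; the point is that $\iota$ is identity-on-objects and sends a morphism to its graph, so that for a natural transformation $\alpha$ of regular functors the two ways around the naturality prism agree on components, reducing to $\ladj(\rrel(\alpha))_r=\alpha_r$. For $j$ I would use that a 2-cell in $\rrlcat$ is a left adjoint natural transformation (\cref{def.reg2cat}) and that $j_\rr$ was defined on hom-posets via tabulations, which \cref{prop.relational_pofunctors} shows are preserved by any relational po-functor together with transposes; the same bookkeeping then makes the relevant square of 2-cells commute.

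Once both $\iota$ and $j$ are 2-natural isomorphisms, the conclusion is formal: a pair of 2-functors equipped with invertible 2-natural transformations $\id\iso\ladj\circ\rrel$ and $\id\iso\rrel\circ\ladj$ is exactly an equivalence of 2-categories (no triangle identities are needed for an equivalence, as opposed to an adjoint equivalence; the inverse 2-natural transformations are obtained componentwise). For the final sentence of the theorem, I would simply forget all 2-cells: the components $\iota_{\cat{R}}$ and $j_\rr$ are isomorphisms of the underlying 1-categories and are 1-natural, so the underlying 1-functors of $\rrel$ and $\ladj$ already form an equivalence---indeed a 1-natural isomorphism on each composite---of the underlying 1-categories.

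The main obstacle I anticipate is not the formal assembly but the care required in the 2-naturality checks: one must verify that the left adjoint natural transformations serving as 2-cells in $\rrlcat$ interact correctly with the tabulation-based, up-to-isomorphism definition of $j_\rr$, and that the graph-based description of $\iota$ genuinely identifies $\ladj(\rrel(\alpha))$ with $\alpha$ on each component. These are the steps that secure strictness of the 2-equivalence, rather than a mere biequivalence, and they are where I would spend the effort.
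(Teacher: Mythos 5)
Your proposal is correct and follows essentially the same route as the paper: the proof there simply cites \cref{lemma.fundamental,lemma.fundamental_po} for the two natural isomorphisms $\iota$ and $j$ (whose 1- and 2-naturality is established within those results) and observes that, since each component is an isomorphism rather than a mere equivalence, the underlying 1-functors already give a 1-equivalence. Your additional attention to the 2-naturality checks is exactly the content the paper delegates to those two lemmas.
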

\begin{proof}
This is just \cref{lemma.fundamental,lemma.fundamental_po}. The second statement says that, for every relational category $\rr$, we have not just an equivalence but an isomorphism of po-categories $\rr\cong\rrel(\ladj(\rr))$. Similarly, for every regular category $\cat{R}$, we have not just an equivalence but an isomorphism of categories $\cat{R}\cong\ladj(\rrel(\rr))$.
\end{proof}

\chapter{Relational po-categories, cartesian bicategories, allegories} \label{chap.carboni_walters}

We close by comparing relational po-categories to two well known axiomizations of bicategories of relations: cartesian bicategories and allegories.

\section{Cartesian bicategories}

A prerelational po-category is exactly a `bicategory of relations'.

\begin{definition}[`Bicategory of relations' {\cite[Definition~2.1]{carboni1987cartesian}}] \label{def.bicategory_of_relations}
  Let $\cc$ be a symmetric monoidal po-category. We say that $\cc$ is a \emph{`bicategory of relations'} if for every object $c$ in $\cc$ we have maps $
  \delta_c
  \colon c \to c \otimes c$ and  
  $\epsilon_c
  \colon c \to I$ such that:
  \begin{enumerate}[label=(\roman*)]
    \item $(c,\delta_c,\epsilon_c)$ forms a commutative comonoid.
    \item $\delta_c$ and $\epsilon_c$ have right adjoints 
    $\delta_c^*
    $
    and 
    $\epsilon_c^*
    $.
    \item $\delta_c$ and $\delta_c^*$ obey the frobenius law: $\delta_c^* \cp \delta_c = (1 \otimes \delta_c) \cp (\delta_c^* \otimes 1$). In pictures:
    \[
	\begin{tikzpicture}[WD]
    \node[link] (delta) {};
   	\draw (delta) -- +(-.5,0) coordinate (end);
    \draw (delta) to[out=60, in=180] +(1,.5);
   	\draw (delta) to[out=-60, in=180] +(1,-.5);
    \node[link, left=1 of delta] (mu) {};
   	\draw (mu) -- (end);
    \draw (mu) to[out=120, in=0] +(-1,.5);
   	\draw (mu) to[out=-120, in=0] +(-1,-.5);
	\end{tikzpicture}
\;\raisebox{2pt}{=}\quad
  \begin{tikzpicture}[WD]
  	\coordinate (htop);
    \coordinate[below=.7 of htop] (hmid);
    \coordinate[below=.7 of hmid] (hbot);
    \node[link, left=.5] (dotL) at ($(hbot)!.5!(hmid)$) {};
    \node[link, right=.5] (dotR) at ($(htop)!.5!(hmid)$) {};
    \draw (dotL) -- +(-1,0) coordinate (l);
    \draw (dotR) -- +(1,0) coordinate (r);
    \draw (dotL) to[out=-60, in=180] (hbot);
    \draw (dotL) to[out=60, in=180] (hmid);
    \draw (hmid) to[out=0, in=-120] (dotR);
    \draw (htop) to[out=0, in=120] (dotR);
    \draw (htop) -- (htop-|l);
    \draw (hbot) -- (hbot-|r);
  \end{tikzpicture}
    \]
    \item each 1-morphism $f\colon c \to d$ is a lax comonoid homomorphism.
    \item $(\delta_c,\epsilon_c)$ is the unique comonoid structure on $c$ with these properties.
  \end{enumerate}
\end{definition}

The key difference in perspective is that while we have chosen to supply our category with wirings $\ww$ (the local poset reflection of $\ccospan\co$), Carboni and Walters give a more syntactic definition, explicitly giving a diagonal map $\delta$ and a terminal map $\epsilon$ for each object and requiring them to obey various conditions.

Note that a supply of wirings already implies we have $\delta_c$ and $\epsilon_c$ obeying (i), (ii), and (iii); see \cref{eqn.generating_wires,eqn.equations_wires}. To show the two definitions are the same, we thus just need to show that (i), (ii), and (iii) amount to a supply of wirings. To do this, we shall prove that \cref{eqn.generating_wires,eqn.equations_wires} present $\ww$. 

\begin{proposition}[Wirings are the po-prop for adjoint frobenius monoids] \label{prop.lafms_are_relfinsetop}
	The generators and relations shown in \cref{eqn.generating_wires,eqn.equations_wires} are complete for the po-prop $\ww$.
\end{proposition}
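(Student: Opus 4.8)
The plan is to exhibit the evident comparison functor and to prove it is an isomorphism by a spider normal-form argument, separating the equational content from the order content. Write $\pp$ for the po-prop presented by the generators $\eta,\mu,\epsilon,\delta$ of \cref{eqn.generating_wires} subject to the equations and inequalities of \cref{eqn.equations_wires}. Every one of these relations holds in $\ww$: the equations and the adjunction inequalities are exactly the content of \cref{prop.left_adj_in_ww} (together with \cref{rem.surprising_inequality}), verified there by computation in $\ccospan\co$. Hence there is a unique identity-on-objects po-prop functor $P\colon\pp\to\ww$ carrying each generator to the named cospan. Since every cospan of finite sets is a composite of the elementary ones, $P$ is surjective on each hom-poset, and since the order on $\ww$ is generated by the images of the inequalities, $P$ is full; it remains to show $P$ is a poset isomorphism $\pp(m,n)\cong\ww(m,n)$ for all $m,n$.

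First I would pin down a normal form modulo the equations alone. These make each object a special commutative Frobenius monoid, so the standard ``spider'' calculus applies: using unitality, associativity, the Frobenius law, and the special law $\delta\cp\mu=\id_1$, every term $m\to n$ fuses to a disjoint union of fully-connected spiders, one per block of a partition of $\ord{m+n}$, together with some number $k\in\nn$ of closed loops $\eta\cp\epsilon$. This is precisely the presentation of cospans of finite sets by special commutative Frobenius monoids (see \cite{coya2017corelations} and the references therein). In particular, modulo the equations there are at most $\er(m+n)\times\nn$ morphisms $m\to n$, indexed by a partition and a loop count.

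Next I would collapse the loop count using the adjunction inequalities, which by \cref{prop.ww_explicit} must disappear from the final tally. One direction deletes a loop: from $\eta\cp\epsilon\le\id_0$ we get $f\otimes(\eta\cp\epsilon)\le f$ for every $f$. The reverse fuses a loop onto a boundary port, and is an instance of the counit inequality $\mu\cp\delta\le\id_2$ applied in context; on a single wire,
\[
  \id_1=(\id_1\otimes\eta)\cp(\mu\cp\delta)\cp(\id_1\otimes\epsilon)\;\le\;(\id_1\otimes\eta)\cp(\id_1\otimes\epsilon)=\id_1\otimes(\eta\cp\epsilon),
\]
where the outer equalities are (co)unitality. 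Thus in the poset reflection a loop is absorbed by any block (every block contains a boundary port): for $m+n\ge1$ the loop count becomes irrelevant, so a morphism is determined by its partition, matching $\ww(m,n)\cong\er\op(m+n)$; while for $m=n=0$ the two derivations force all positive loop counts to coincide, leaving exactly $\id_0$ and $\eta\cp\epsilon$, matching $\ww(0,0)\cong\{0\le1\}\op$. This bounds $\lvert\pp(m,n)\rvert$ by $\lvert\ww(m,n)\rvert$, so the surjection $P$ is a bijection. For the order, a single elementary coarsening — fusing two blocks — is again an instance of $\mu\cp\delta\le\id_2$ composed into the relevant legs, and any coarsening is a finite composite of such fusions; hence $P(f)\le P(g)$ implies $f\le g$, so $P$ is order-reflecting. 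A monotone, bijective, order-reflecting map is an isomorphism of posets; naturality in the prop structure is immediate, giving $\pp\cong\ww$.

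The hard part is the third paragraph: showing the inequalities are \emph{complete} for the order, i.e.\ that the adjunction inequalities of \cref{eqn.equations_wires} realize, in context, all the generating $2$-cells of $\ccospan\co$ (the ``add a point'' and ``fold a pair'' maps of cospans). Loop absorption and elementary coarsening are the two families that must be derived, and one must check both that they suffice and that they do not over-identify — in particular that $\id_0$ and $\eta\cp\epsilon$ stay distinct (the absence of the ``extra'' law), which is guaranteed since $P$ already separates them in $\ww$.
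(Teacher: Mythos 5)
Your proposal is correct and follows essentially the same route as the paper's proof: both reduce the equational content to the known presentation of $\ccospan$ by special commutative Frobenius monoids, and then show that the two adjunction inequalities generate all $2$-cells of $\ww$ via elementary block fusions (instances of $\mu\cp\delta\le\id_2$ composed into context) together with loop absorption, with the $m=n=0$ case treated separately. The only cosmetic differences are that you establish local bijectivity by a normal-form count where the paper cites local fullness and leaves faithfulness to the reader, and you derive loop absorption syntactically where the paper argues it semantically via retractions of monos between nonempty finite sets.
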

\begin{proof}
Let $\ccat{X}$ denote the prop presented by \cref{eqn.generating_wires,eqn.equations_wires}, and recall that $\ww$ is the local poset reflection of $\ccospan\co$. Define a prop functor $\ccat{X}\to\ww$ by sending $\epsilon$ and $\delta$ to the unique cospans of the form $0\to 1\from 1$ and $2\to 1\from 1$, and sending $\eta$ and $\mu$ to their respective transposes, which happen to be their left adjoints. It is easy to check that the equations \cref{eqn.equations_wires} hold in $\ww$, and so this is well defined.

We claim this functor is fully faithful and locally full (it is obviously locally faithful). Fullness was shown in \cite[Lemma 3.6]{fong2019hypergraph}, and we leave faithfulness to the reader. In the remainder we show that every 2-morphism in $\ww$ is a composite of tensors of the 2-morphisms $(\eta\cp\epsilon)\leq \id_0$ and $(\delta\cp\mu)\leq \id_2$.

Let $m\To{f}p\From{g}n$ be any 1-morphism in $\ww$, and consider the epi-mono factorization $m+n\surj i\inj p$ of the copairing $\copair{f}{g}\colon m+n\to p$. This provides a 2-morphism $i\tto p$ in $\ccospan$. As long as $i\neq 0$, there is also a 2-morphism $p\tto i$, because every mono between nonempty finite sets has a retraction; thus in the poset reflection $\ww$ of $\ccospan\co$ we have $i=p$. However, if $i=0$ then $p\leq i$ is a strict inequality in $\ww$. We can only have $i=0$ if $m=n=0$, and we see that $i$ is just the image of $(\eta\cp\epsilon)$ under our functor. Thus we may assume that all our cospans are jointly epic. It follows that any morphism $q\tto p$ of cospans is also epic.

Suppose given a 2-morphism $p\tto q$ in $\ww(m,n)$, i.e.\ a morphism $q\tto p$ of jointly epic cospans. It can be factored as follows:
\[
\begin{tikzcd}[row sep=10pt]
  &
  p\\&
  p+1\ar[u, ->>]\\
  m\ar[uur]\ar[ur]\ar[dr]\ar[ddr]&
  \vdots\ar[u, ->>]&
  n\ar[uul]\ar[ul]\ar[dl]\ar[ddl]\\&
  q-1\ar[u, ->>]\\&
  q\ar[u, ->>]
\end{tikzcd}
\]
where we are using arrows to represent functions (morphisms in $\finset$). It suffices to assume $q=p+1$. Moreover, this map is the tensor product of $p-1$ identity 2-morphisms and one 2-morphism of the form shown left
\[
\begin{tikzcd}[row sep=0pt]
	&
	1\\
	m'\ar[ur]\ar[dr]&&
	n'\ar[ul]\ar[dl]\\&
	2\ar[uu]
\end{tikzcd}
\quad=\quad
\begin{tikzcd}[row sep=0pt]
	&
	2&&
	1&&
	2\\
	m'\ar[ur]\ar[dr]&&
	2\ar[ul, equal]\ar[dl, equal]\ar[dr, equal]\ar[ur]&&
	2\ar[ul]\ar[dl, equal]\ar[ur, equal]\ar[dr, equal]&&
	n'\ar[ul]\ar[dl]\\&
	2\ar[uu, equal]&&
	2\ar[uu]&&
	2\ar[uu, equal]
\end{tikzcd}
\]
but any such 2-morphism can be factored as shown right. The middle map is the 2-morphism $(\delta\cp\mu)\leq\id_2$, as desired.
\end{proof}

\begin{proposition}
  A monoidal po-category is a prerelational po-category iff it is a `bicategory of relations'.
\end{proposition}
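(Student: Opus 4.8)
The plan is to lean on the presentation of $\ww$ established in \cref{prop.lafms_are_relfinsetop}: because \cref{eqn.generating_wires,eqn.equations_wires} present $\ww$, supplying $\ww$ to an object $c$ amounts to no more than a commutative comonoid $(\delta_c,\epsilon_c)$ together with an adjoint commutative monoid $(\mu_c,\eta_c)$ satisfying the Frobenius and special laws, assembled compatibly with $\otimes$. The proof then reduces to bookkeeping that matches the clauses of \cref{def.bicategory_of_relations} against this data: (i)--(iii) against the relations of \cref{eqn.equations_wires}, (iv) against lax homomorphy, and (v) against the uniqueness statement \cref{prop.prerel_comon_unique}.

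For the forward implication I would assume $\cc$ is prerelational, so it supplies $\ww$, and write $\delta_c,\epsilon_c,\mu_c,\eta_c$ for the images of the generators under $s_c$. Clauses (i) and (iii) then hold because the functor $s_c$ preserves the comonoid and Frobenius relations of \cref{eqn.equations_wires}. Clause (ii) holds because $\delta\dashv\mu$ and $\epsilon\dashv\eta$ in $\ww$ by \cref{prop.left_adj_in_ww} and po-functors preserve adjunctions (\cref{lemma.funs_ladjs}), so $\delta_c^\ast=\mu_c$ and $\epsilon_c^\ast=\eta_c$. Clause (iv) is exactly the prerelationality hypothesis. Finally, (ii) forces any competing comonoid $(\delta',\epsilon')$ to consist of left adjoints, so clause (v) is immediate from \cref{prop.prerel_comon_unique}.

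For the converse I would build the supply by hand. For each $c$ define $s_c\colon\ww\to\cc$ on generators by $\delta\mapsto\delta_c$, $\epsilon\mapsto\epsilon_c$, $\mu\mapsto\delta_c^\ast$, $\eta\mapsto\epsilon_c^\ast$; by \cref{prop.lafms_are_relfinsetop} this extends to a po-functor once all relations of \cref{eqn.equations_wires} hold for these images. The comonoid relations hold by (i); the monoid relations are the right adjoints of the comonoid relations and hold since adjoints reverse composites and tensors (cf.\ \cref{prop.ladj_smc}); the Frobenius law is (iii); and the adjunction inequalities hold by (ii). The only relation not visibly present in \cref{def.bicategory_of_relations} is the special law $\delta_c\cp\delta_c^\ast=\id_c$, and here $\id_c\le\delta_c\cp\delta_c^\ast$ is the unit from (ii), while the reverse inequality is precisely the derivation of \cref{rem.surprising_inequality}, which uses only the counit law (i) and the counit inequality (ii). It then remains to verify that these $s_c$ form a supply in the sense of \cref{def.supply}: conditions (i) and (ii) hold by construction (objects go to tensor powers, strongators taken as associators), and the compatibility squares \cref{eqn.supply_commute_tensors} reduce, on the generators, to the statement that the tensor comonoid on $c\otimes d$ and the trivial comonoid on $I$ again satisfy (i)--(iii) and so coincide with the designated comonoids by uniqueness clause (v). Lax homomorphy of the comonoid supply is then clause (iv), whence $\cc$ is prerelational.

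I expect the main obstacle to be this final assembly in the converse: promoting the objectwise Frobenius structures to a genuine supply hinges on the monoidal-compatibility squares of \cref{eqn.supply_commute_tensors}, and that is exactly where the uniqueness clause (v) is indispensable. The other point that needs care is confirming that the special law is \emph{forced} rather than assumed, which is why I isolate \cref{rem.surprising_inequality} as the source of the missing inequality.
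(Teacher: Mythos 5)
Your proposal is correct and follows essentially the same route as the paper: both directions rest on the presentation of $\ww$ from \cref{prop.lafms_are_relfinsetop}, with the special law recovered via \cref{rem.surprising_inequality} and the supply-compatibility squares (and clause (v) in the forward direction) handled by the uniqueness of left adjoint comonoids, \cref{prop.prerel_comon_unique}. Your write-up is somewhat more explicit than the paper's about checking the supply axioms of \cref{def.supply}, but there is no substantive difference in approach.
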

\begin{proof}
  Note that any prerelational po-category is a `bicategory of relations', with the comonoid $(\delta_c,\epsilon_c)$ given by the supply of wirings, and the uniqueness (iv) given by \cref{prop.prerel_comon_unique}.

  For the converse, it is enough to show that (i), (ii), (iii), and (v) in \cref{def.bicategory_of_relations} implies a supply of wirings.   To see that each object $c$ is equipped with a wiring, we use the presentation \cref{eqn.equations_wires} as per \cref{prop.lafms_are_relfinsetop}; we define of course $\delta = \delta_c$, $\epsilon = \epsilon_c$, $\mu = \delta_c^*$, and $\eta = \epsilon_c^*$. Condition (i) states that $(\delta,\epsilon)$ form a commutative comonoid, and so their adjoints $(\mu,\eta)$ form a commutative monoid. Moreover, Condition (ii) takes care of the adjointness equations. It is well known the pairwise frobenius law of Condition (iii) is equivalent to the three part version in \cref{eqn.equations_wires}.\footnote{See for example: https://graphicallinearalgebra.net/2015/10/01/22-the-frobenius-equation/.}
 The special law is derivable; see \cref{rem.surprising_inequality}. Finally, the fact that these wirings form a supply follows from the uniqueness condition (v).
\end{proof}

A relational po-category is exactly a functionally complete `bicategory of relations'.

\begin{definition}[Functional completeness {\cite[Definition 3.1]{carboni1987cartesian}}]
  A `bicategory of relations' is \emph{functionally complete} if every 1-morphism $r \to I$ has a tabulation.
\end{definition}

\begin{proposition}
  A monoidal po-category is a relational po-category iff it is a functionally complete `bicategory of relations'. 
\end{proposition}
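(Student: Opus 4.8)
The plan is to treat the two implications separately. For the forward direction, if $\cc$ is a relational po-category then by \cref{def.reg2cat} it is prerelational and every morphism has a tabulation; by the previous proposition a prerelational po-category is exactly a `bicategory of relations', and since in particular every morphism $r\to I$ has a tabulation, $\cc$ is functionally complete. This direction is essentially a matter of unwinding definitions.

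The content is in the converse: given a functionally complete `bicategory of relations' $\cc$, I must produce a tabulation of an arbitrary morphism $f\colon r\to s$, knowing only that morphisms into $I$ can be tabulated. By the previous proposition $\cc$ is prerelational, so by \cref{rem.frob_means_freedom} it carries the self-dual compact closed structure built from the cup and cap of \cref{eqn.cap_cup}, together with transposition $(-)\tp$. The idea is to bend $f$ into a morphism targeting $I$, tabulate that, and then unbend the result. Concretely, I would form the name $\name{f}\colon I\to r\otimes s$, take its transpose $(\name{f})\tp\colon r\otimes s\to I$, and invoke functional completeness to obtain a tabulation $r\otimes s\xrightarrow{N_R}|f|\xrightarrow{N_L}I$.

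From this tabulation I would extract the data of a tabulation of $f$. Since $N_L\colon|f|\to I$ is a left adjoint, the argument of \cref{prop.prerel_comon_unique} forces $N_L=\epsilon_{|f|}$, whence $N_L\tp=\eta_{|f|}$; transposing the factorization $N_R\cp N_L=(\name f)\tp$ then yields $\eta_{|f|}\cp L=\name f$, where $L\coloneqq N_R\tp\colon|f|\to r\otimes s$ is a left adjoint (\cref{cor.leftadjs}). Condition (iii) for the tabulation of $(\name f)\tp$ says precisely that $L$ is monic (its associated span collapses to $L$ by the counit law; cf.\ \cref{prop.condition3,prop.characterize_monos}). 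I would then split $L$ into its two legs $p\coloneqq L\cp(\id_r\otimes\epsilon_s)$ and $q\coloneqq L\cp(\epsilon_r\otimes\id_s)$, both left adjoints by \cref{prop.ladj_smc}, and set $f_R\coloneqq p\tp$ (a right adjoint) and $f_L\coloneqq q$ (a left adjoint).

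It remains to check the two tabulation axioms for $(f_R,f_L)$. Conditions (i) and (ii) are built in. For the factorization $f=f_R\cp f_L=p\tp\cp q$, I would use that $L$ is a comonoid homomorphism (\cref{cor.leftadjs}) to rewrite $\eta_{|f|}\cp L=\mathrm{cup}_{|f|}\cp(p\otimes q)$, and then yank $p$ across the cup to identify this with $\name{(p\tp\cp q)}$; since $\eta_{|f|}\cp L=\name f$ and naming is a bijection in a compact closed category, this gives $f=p\tp\cp q$. For condition (iii), the span associated to $(f_R,f_L)$ is $\delta_{|f|}\cp(q\otimes p)$, which equals $L$ postcomposed with the symmetry and is therefore monic; by \cref{prop.condition3} condition (iii) holds. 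I expect the main obstacle to be exactly this last paragraph: carefully matching the tabulation data of the bent morphism $(\name f)\tp$ with a genuine tabulation of $f$ itself, i.e.\ the bookkeeping of the yanking identities and the verification that naming is a bijection, so that the factorization $f=p\tp\cp q$ comes out on the nose.
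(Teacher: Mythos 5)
Your argument is correct, but it takes a genuinely different route from the paper: the paper's entire proof of the nontrivial direction is a citation to \cite[Corollary~3.4(i)]{carboni1987cartesian}, whereas you reconstruct that argument internally. Your forward direction matches the paper's (it is pure definition-unwinding). For the converse, the paper simply defers the claim that tabulations of morphisms $r\to I$ yield tabulations of all morphisms; you instead carry out the standard bending construction inside the paper's own framework, and every step checks out: $N_L=\epsilon_{|f|}$ follows from \textsc{h2} for left adjoints together with $\epsilon_I=\id_I$ (the same observation used in \cref{prop.prerel_comon_unique}); the associated span of the tabulation of the bent morphism collapses to $L=N_R\tp$ by counitality, so condition (iii) there says exactly that $L$ is monic via \cref{prop.characterize_monos}; the identity $L=\delta_{|f|}\cp(p\otimes q)$ holds because $L$ is a comonoid homomorphism (\cref{cor.leftadjs}) and $\delta_{r\otimes s}$ followed by the two projections is the identity; and the yanking identities plus injectivity of naming in the compact closed structure of \cref{rem.frob_means_freedom} recover $f=p\tp\cp q$ on the nose, while cocommutativity of $\delta_{|f|}$ identifies the associated span of $(p\tp,q)$ with $L$ up to symmetry, giving condition (iii) by \cref{prop.condition3}. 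What the paper's approach buys is brevity and an explicit pointer to the original source; what yours buys is a self-contained proof that exercises exactly the graphical machinery the paper develops (and which the paper arguably ought to have included, given its stated expository aims). The only caveat is that your route is not independent of Carboni and Walters in spirit---it is essentially their Corollary~3.4(i) rederived---but as a replacement for a bare citation it is a complete and correct proof.
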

\begin{proof}
  We simply need to check that tabulations for morphisms $r \to I$ imply tabulations for all morphisms. This is the content of \cite[Corollary~3.4(i)]{carboni1987cartesian}.
\end{proof}

\section{Allegories}

In this final section we outline the connection with Freyd's notion of allegory. A full treatment of allegories and their relationship with regular categories can be found in \cite[Chapter A3]{Johnstone:2002a}.

\begin{definition}[Unital allegory] \label{def.allegory}
  A po-category $\cc$ is an \emph{allegory} if it is equipped with an identity-on-objects po-functor $\tp\colon \cc\op \to \cc$ such that 
  \begin{enumerate}[label=(\roman*)]
    \item $\tp \cp \tp = \id$.
    \item each hom-poset $\cc(x,y)$ has binary meets.
    \item for all $f\colon x \to y$, $g\colon y \to z$, and $h\colon x \to z$ the \emph{modular law} holds: $(f \cp g) \wedge h \le f \cp (g \wedge (f\tp \cp h))$.
  \end{enumerate}

  We further say that an allegory is \emph{unital} if there exists an object $u$ such that
  \begin{enumerate}[label=(\roman*)]
    \item[(iv)] $\id_u$ is the top element of $\cc(u,u)$.
    \item[(v)] every object $x$ there is a morphism $\epsilon\colon x \to u$ such that $\epsilon \cp \epsilon\tp \ge \id_x$.
  \end{enumerate}
  We call any such $u$ a \emph{unit}.
\end{definition}

\begin{definition}[tabular allegory]
  A \emph{tabulation} of a morphism $f$ in an allegory is a pair of morphisms $f_R,f_R$ such that $f = f_R\cp f_L$ and $(f_L \cp {f_L}\tp) \wedge (f_R \cp {f_R}\tp) = \id$. We say that an allegory is \emph{tabular} if every morphism has a tabulation.
\end{definition}

It is perhaps instructive to note that any prerelational po-category is a unital allegory. Indeed, given a prerelational po-category, the self-duality on objects provides the identity-on-objects involution $\tp$; this implies Condition (i) of \cref{def.allegory}.

We have previously mentioned that the hom-posets of a prerelational po-category are meet-semilattices, which clearly implies Condition (ii); this can be proved as follows.

\begin{proposition} \label{prop.meetsl}
  In any prerelational po-category, the hom-posets are meet-semilattices with join of $f$ and $g$ given by 
  $
    \begin{tikzpicture}[inner WD,inner sep=1pt,baseline=(dot)]
    	\node[oshellr] (f1) {$f$};
  		\node[oshellr, below=.25 of f1] (f2) {$g$};
  		\node[link] at ($(f1.west)!.5!(f2.west)+(-1,0)$) (dot) {};
  		\node[link] at ($(f1.east)!.5!(f2.east)+(1,0)$) (dot2) {};
  		\draw (f1.west) to[out=180, in=60] (dot);
  		\draw (f2.west) to[out=180, in=-60] (dot);
  		\draw (dot) to +(-1,0);
  		\draw (f1.east) to[out=0, in=120] (dot2);
  		\draw (f2.east) to[out=0, in=-120] (dot2);
  		\draw (dot2) to +(1,0);
    \end{tikzpicture}
  $
  and top element given by 
  $
  \begin{tikzpicture}[inner WD]
    \node[link] (dot1) {};
    \node[link, right=.5 of dot1] (dot2) {};
    \draw (dot1) -- +(-1,0);
    \draw (dot2) -- +(1,0);
  \end{tikzpicture}
  $\;.
\end{proposition}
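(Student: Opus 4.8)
The plan is to show that the displayed binary operation, which I will write $f\wedge g \coloneqq \delta_r\cp(f\otimes g)\cp\mu_s$ for parallel morphisms $f,g\colon r\to s$, is the greatest lower bound of $f$ and $g$, and that $\top\coloneqq\epsilon_r\cp\eta_s$ is the greatest element of $\cc(r,s)$; together these exhibit each hom-poset as a (bounded) meet-semilattice. Throughout I would argue purely equationally, using four ingredients: the commutative comonoid and monoid laws together with the special law $\delta_c\cp\mu_c=\id_c$ supplied to each object (\cref{eqn.equations_wires}); the adjunctions $\epsilon_c\dashv\eta_c$ and $\delta_c\dashv\mu_c$ with their unit/counit inequalities (\cref{prop.left_adj_in_ww}); the lax comonoid homomorphism laws $f\cp\epsilon_s\leq\epsilon_r$ and $f\cp\delta_s\leq\delta_r\cp(f\otimes f)$ that hold for \emph{every} $f$ by definition of prerelational (\cref{eqn.lax_hom_prerel}); and the graphical calculus of \cref{rem.frob_means_freedom} to keep interchange bookkeeping transparent.

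For the top element I would argue $f = f\cp\id_s \leq f\cp(\epsilon_s\cp\eta_s) \leq \epsilon_r\cp\eta_s = \top$, where the first inequality is the unit $\id_s\leq\epsilon_s\cp\eta_s$ of the adjunction $\epsilon_s\dashv\eta_s$ and the second bounds the factor $f\cp\epsilon_s$ sitting to the left of $\eta_s$ using the lax counit law $f\cp\epsilon_s\leq\epsilon_r$.

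For lower-boundedness I would first record the auxiliary inequality $\mu_s\leq\id_s\otimes\epsilon_s$: from $\id_s\leq\epsilon_s\cp\eta_s$ and interchange one gets $\mu_s = \id_{s\otimes s}\cp\mu_s \leq (\id_s\otimes\epsilon_s)\cp(\id_s\otimes\eta_s)\cp\mu_s = \id_s\otimes\epsilon_s$, the last equality being the right monoid unit law $(\id_s\otimes\eta_s)\cp\mu_s=\id_s$. Then I would compute $f\wedge g \leq \delta_r\cp(f\otimes g)\cp(\id_s\otimes\epsilon_s) = \delta_r\cp\bigl(f\otimes(g\cp\epsilon_s)\bigr) \leq \delta_r\cp(f\otimes\epsilon_r) = f$, where the middle inequality uses the lax counit law for $g$ and the final equality is the comonoid counit law $\delta_r\cp(\id_r\otimes\epsilon_r)=\id_r$ together with interchange. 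The symmetric computation, discarding the first factor via $\mu_s\leq\epsilon_s\otimes\id_s$, gives $f\wedge g\leq g$; alternatively one invokes cocommutativity of $\delta_r$ and commutativity of $\mu_s$ to reduce $g\wedge f$ to $f\wedge g$.

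The greatest-lower-bound property is where the defining hypothesis of a prerelational po-category is essential, and I expect it to be the conceptual crux. Given $h\leq f$ and $h\leq g$, I would use the special law $\delta_s\cp\mu_s=\id_s$ to rewrite $h = h\cp\delta_s\cp\mu_s$, then apply the lax comultiplication law $h\cp\delta_s\leq\delta_r\cp(h\otimes h)$ and monotonicity of $\otimes$ and $\cp$ to obtain $h = h\cp\delta_s\cp\mu_s \leq \delta_r\cp(h\otimes h)\cp\mu_s \leq \delta_r\cp(f\otimes g)\cp\mu_s = f\wedge g$. No single step is difficult once the orientations of the lax inequalities are pinned down; the only real pitfalls are bookkeeping ones---keeping the interchange law straight when turning $\delta_r\cp(f\otimes\epsilon_r)$ into $f$, and ensuring each lax and each adjunction inequality is used in the correct direction---both of which are routine in the supplied graphical language.
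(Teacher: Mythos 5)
Your proposal is correct and follows essentially the same route as the paper's proof: the same adjunction-unit-plus-lax-counit argument for the top element, the same leg-discarding computation (via unitality of the monoid and comonoid) for $f\wedge g\leq f,g$, and the same reduction of the universal property to $h\leq h\wedge h$ via the special law and the lax comultiplication inequality. The only cosmetic difference is that you spell out the interchange bookkeeping algebraically where the paper leans on the graphical calculus, and you correctly read the proposition's ``join'' as the meet it is proved to be.
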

\begin{proof}
For any $c,c'\in\cc$, we show that $X\coloneqq\cc(c,c')$ is a meet-semilattice. To see that
$
  \begin{tikzpicture}[inner WD]
    \node[link] (dot1) {};
    \node[link, right=.5 of dot1] (dot2) {};
    \draw (dot1) -- +(-1,0);
    \draw (dot2) -- +(1,0);
  \end{tikzpicture}
$
is the top element, consider the following chain of inequalities for any $x\in X$:
\[
\begin{tikzpicture}
	\node (p1) {
  \begin{tikzpicture}[inner WD]
		\node[oshellr] (x) {$x$};
		\draw (x.west) -- +(-.5,0);
		\draw (x.east) -- +(1,0);
  \end{tikzpicture}	
	};
	\node (p2) [right=.5 of p1] {
  \begin{tikzpicture}[inner WD]
  	\node[oshellr] (x) {$x$};
    \node[link, right=.5 of x] (dot1) {};
    \node[link, right=.5 of dot1] (dot2) {};
    \draw (dot1) -- (x);
		\draw (x.west) -- +(-.5,0);
    \draw (dot2) -- +(1,0);
  \end{tikzpicture}
  };
	\node (p3) [right=.5 of p2] {
  \begin{tikzpicture}[inner WD]
    \node[link] (dot1) {};
    \node[link, right=.5 of dot1] (dot2) {};
    \draw (dot1) -- +(-1,0);
    \draw (dot2) -- +(1,0);
  \end{tikzpicture}
  };
	\node at ($(p1.east)!.5!(p2.west)$) {$\leq$};
	\node at ($(p2.east)!.5!(p3.west)$) {$\leq$};
 \end{tikzpicture}
\vspace{-.3in}\]
following from \cref{eqn.equations_wires,eqn.lax_hom_prerel}. Let $f\wedge g$ denote the morphism $
    \begin{tikzpicture}[inner WD,inner sep=1pt,baseline=(f2.south)]
    	\node[oshellr] (f1) {$f$};
  		\node[oshellr, below=.25 of f1] (f2) {$g$};
  		\node[link] at ($(f1.west)!.5!(f2.west)+(-1,0)$) (dot) {};
  		\node[link] at ($(f1.east)!.5!(f2.east)+(1,0)$) (dot2) {};
  		\draw (f1.west) to[out=180, in=60] (dot);
  		\draw (f2.west) to[out=180, in=-60] (dot);
  		\draw (dot) to +(-1,0);
  		\draw (f1.east) to[out=0, in=120] (dot2);
  		\draw (f2.east) to[out=0, in=-120] (dot2);
  		\draw (dot2) to +(1,0);
    \end{tikzpicture}
  $; we want to show it really is a meet. 
  
  By the above work, $f\wedge  g\leq f$ and $f\wedge g\leq g$, using the unitality for monoids and comonoids. To prove it has the universal property, suppose $h\leq f$ and $h\leq g$. Then $h\wedge h\leq f\wedge g$, so it suffices to show $h\leq h\wedge h$. This follows from the special law \eqref{eqn.equations_wires} and the fact that $h$ is a lax comonoid homomorphism and an oplax monoid homomorphism (see \cref{lemma.lax_comons_oplax_mons}).
\end{proof}

Finally, the modular law (Condition (iii) of \cref{def.allegory}), follows immediately from the fact that morphisms in a prerelational po-category are lax comonoid homomorphisms. 

\begin{proposition}
  Let $f\colon x \to y$, $g\colon y \to z$, and $h\colon x \to z$ be morphisms in a prerelational po-category. Then the modular law holds:  $(f \cp g) \wedge h \le f \cp (g \wedge (f\tp \cp h))$.
\end{proposition}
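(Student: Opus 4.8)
The plan is to turn the two sides into their explicit forms as string diagrams and reduce the whole statement to a single local inequality, which will turn out to be the (lax comonoid, equivalently oplax monoid) homomorphism property of $f$ read through the compact closed structure.

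First I would expand both meets using the formula for binary meets established in \cref{prop.meetsl}, namely $a\wedge b=\delta\cp(a\otimes b)\cp\mu$. Writing $f\colon x\to y$, $g\colon y\to z$, and $h\colon x\to z$, functoriality of $\otimes$ and $\cp$ gives
\[
  (f\cp g)\wedge h=\delta_x\cp(f\otimes\id_x)\cp(g\otimes h)\cp\mu_z,
  \qquad
  f\cp\bigl(g\wedge(f\tp\cp h)\bigr)=f\cp\delta_y\cp(\id_y\otimes f\tp)\cp(g\otimes h)\cp\mu_z.
\]
Thus both sides share the common tail $(g\otimes h)\cp\mu_z$, and since pre-composition is monotonic it suffices to prove the single ``front'' inequality
\[
  \delta_x\cp(f\otimes\id_x)\ \le\ f\cp\delta_y\cp(\id_y\otimes f\tp)
\]
of morphisms $x\to y\otimes x$.

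Next I would establish this front inequality directly from the supply axioms. Its left-hand side copies the input and then applies $f$ on one leg, while the right-hand side applies $f$ first, copies, and then applies the transpose $f\tp$ (see \cref{rem.frob_means_freedom}) on the other leg; so it is exactly the assertion that $f$ may be ``pulled through'' a copy node at the cost of leaving a bent copy of itself behind. Concretely, it is the oplax monoid homomorphism inequality $\mu_x\cp f\le(f\otimes f)\cp\mu_y$---equivalently, by \cref{lemma.lax_comons_oplax_mons}, the lax comonoid homomorphism property guaranteed for every morphism of a prerelational po-category---with one wire bent from an input to an output using the cup and cap of \cref{eqn.cap_cup}. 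Since bending wires and the symmetry are monotone isomorphisms of hom-posets, they preserve the inequality, so the front inequality holds and the proposition follows.

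The one point requiring care is the direction of the inequality: the lax comonoid (respectively oplax monoid) law only bounds a single copy of $f$ below a doubled one in one specific sense, and one must check that the bending carries it to exactly the front inequality above rather than to its (false) converse. Tracking which leg is bent, and that the extra copy of $f$ materializes precisely as the $f\tp$ on the lower branch, is the only subtle bookkeeping; everything else is the monotonicity of composition and the graphical calculus of \cref{rem.frob_means_freedom}.
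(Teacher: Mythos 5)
Your proof is correct and follows essentially the same route as the paper's: both reduce the modular law, after peeling off the common $(g\otimes h)\cp\mu_z$ tail, to the lax comonoid (equivalently, by \cref{lemma.lax_comons_oplax_mons}, oplax monoid) homomorphism property of $f$ read through the Frobenius wire-bending of \cref{rem.frob_means_freedom}. Your version merely makes explicit the algebraic bookkeeping that the paper's graphical ``rearrange the junction so both $f$'s are on its left'' performs in one stroke, and your check that the bent inequality lands on $\mu_x\cp f\le(f\otimes f)\cp\mu_y$ rather than its converse is exactly the right point to verify.
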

\begin{proof}
We want to prove the following inequality:
  \[
    \begin{tikzpicture}
      \node (P1) {
        \begin{tikzpicture}[inner WD]
          \node[oshellr] (f1) {$f$};
          \node[oshellr, right=.5 of f1] (f2) {$g$};
          \node[oshellr, below=.25 of f2] (f3) {$h$};
          \coordinate (botleft) at (f3-|f1.west);
          \node[link] at ($(f1.west)!.5!(botleft)+(-1,0)$) (dot) {};
          \node[link] at ($(f2.east)!.5!(f3.east)+(1,0)$) (dot2) {};
          \draw (f1.west) to[out=180, in=60] (dot);
          \draw (botleft) to[out=180, in=-60] (dot);
          \draw (dot) to +(-1,0);
          \draw (f1.east) to (f2.west);
          \draw (f2.east) to[out=0, in=120] (dot2);
          \draw (botleft) -- (f3.west);
          \draw (f3.east) to[out=0, in=-120] (dot2);
          \draw (dot2) to +(1,0);
        \end{tikzpicture}
      };
      \node (P2) [right=.5 of P1] {
        \begin{tikzpicture}[inner WD]
          \node[oshelll] (f1) {$f$};
          \node[oshellr, right=.5 of f1] (f3) {$h$};
          \node[oshellr, above=.25 of f3] (f2) {$g$};
          \coordinate (topleft) at (f2-|f1.west);
          \node[link] at ($(f1.west)!.5!(topleft)+(-1,0)$) (dot) {};
          \node[link] at ($(f2.east)!.5!(f3.east)+(1,0)$) (dot2) {};
          \node[oshellr, left=.25 of dot] (f4) {$f$};
          \draw (topleft) to[out=180, in=60] (dot);
          \draw (f1.west) to[out=180, in=-60] (dot);
          \draw (dot) to (f4);
          \draw (f4.west) to +(-.75,0);
          \draw (topleft) to (f2.west);
          \draw (f2.east) to[out=0, in=120] (dot2);
          \draw (f1.east) to (f3.west);
          \draw (f3.east) to[out=0, in=-120] (dot2);
          \draw (dot2) to +(1,0);
        \end{tikzpicture}
      };
      \node at ($(P1.east)!.5!(P2.west)$) {$\leq$};
    \end{tikzpicture}
  \]
Using the hypergraph structure, we can rearrange the left-hand junction in such a way that both $f$'s are on its left; see (\cref{rem.frob_means_freedom}). Then the required inequality is simply the fact that $f$ is a lax comonoid homomorphism. 
\end{proof}

\begin{remark}
The simplicity of this graphical proof, and the derivation of the modular law from the primitive wiring/frobenius operations, was emphasised informally by Walters as a key advantage of the `bicategory of relations' approach.\footnote{For example, here: http://rfcwalters.blogspot.com/2009/10/categorical-algebras-of-relations.html.}
We are inclined to agree.
\end{remark}

Thus every prerelational po-category is an allegory, in fact a unital allegory by taking $u$ to be the monoidal unit.

We close by mentioning that relational po-categories are unital tabular allegories.

\begin{proposition}
  Let $\cc$ be a monoidal po-category. Then $\cc$ is a relational po-category iff it is a unital tabular allegory.
\end{proposition}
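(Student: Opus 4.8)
The plan is to prove the two implications separately, using the observations accumulated in this section for the forward direction and the classical theory of allegories for the backward one. For the forward direction I would start from the facts already recorded above: every prerelational po-category is a unital allegory, with the allegory unit $u$ taken to be the monoidal unit $I$, the involution $\tp$ given by the Frobenius transpose of \cref{rem.frob_means_freedom}, binary meets supplied by \cref{prop.meetsl}, and the modular law established in the proposition immediately preceding. Since a relational po-category is in particular prerelational, the only thing left to check is tabularity.

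For that check, given $f\colon r\to s$ I would take its tabulation $(f_R,f_L)$ in the sense of \cref{def.tabulation} and present the two legs as maps (left adjoints) $f_L\colon|f|\to s$ and $f_R\tp\colon|f|\to r$, so that $f=f_R\cp f_L$. The only computation needed is that condition (iii) of \cref{def.tabulation} is verbatim the allegory tabulation condition: expanding the definition of $\hat f=\delta_{|f|}\cp(f_L\otimes f_R\tp)$ gives $\hat f\cp\hat f\tp=\delta_{|f|}\cp\big((f_L\cp f_L\tp)\otimes(f_R\tp\cp f_R)\big)\cp\mu_{|f|}$, and the meet formula of \cref{prop.meetsl} rewrites the right-hand side as $(f_L\cp f_L\tp)\wedge(f_R\tp\cp f_R)$. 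Hence $\hat f\cp\hat f\tp=\id_{|f|}$ is exactly the statement that the pair $(f_L,f_R\tp)$ is jointly monic, i.e.\ a tabulation of $f$ in the allegory, so $\cc$ is tabular.

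For the backward direction, suppose $\cc$ is a monoidal po-category which is also a unital tabular allegory, and take the monoidal unit $I$ as the allegory unit $u$. Here I would invoke the classical theorem of Freyd and Scedrov (\cite{freyd1990categories}; see also \cite[Chapter A3]{Johnstone:2002a}) that the category $\ladj(\cc)$ of maps of a unital tabular allegory is regular, and that a tabular allegory is recovered as the po-category of relations of its maps, i.e.\ $\cc\cong\rrel(\ladj(\cc))$. Granting this, $\cc$ is isomorphic to $\rrel$ of a regular category and is therefore relational by \cref{cor.rrel_on_obs}; note that this route is not circular, since the isomorphism $\cc\cong\rrel(\ladj(\cc))$ is supplied by allegory theory rather than by \cref{lemma.fundamental_po}, which presupposes relationality.

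The main obstacle is reconciling the two monoidal structures: the tensor $\otimes$ on $\cc$ is given data, whereas the classical reconstruction produces the relational product induced by categorical products in $\ladj(\cc)$. I would close this gap by checking that the unital-allegory hypotheses force these to coincide: the map $\epsilon$ of condition (v), together with $\id_u$ being top (condition (iv)) and \cref{cor.leftadjs}, exhibits $I$ as terminal in $\ladj(\cc)$, while tabulations compute pullbacks exactly as in \cref{cor.pullbacks}, so $\otimes$ agrees with the categorical product on $\ladj(\cc)$. Finally \cref{prop.prerel_comon_unique} pins down the comonoid, hence the whole wiring supply, uniquely from this product, so the given monoidal structure is the one carried by $\rrel(\ladj(\cc))$ and the identification is complete.
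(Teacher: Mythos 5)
Your proposal is correct and follows essentially the same route as the paper, which simply cites the classical result (Johnstone, Theorem A.3.2.10) that unital tabular allegories are isomorphic to the relations po-category of their regular category of maps and then invokes the main equivalence theorem. You supply more detail than the paper does---in particular the explicit identification of condition (iii) of \cref{def.tabulation} with the allegory tabulation condition via the meet formula of \cref{prop.meetsl}, and the reconciliation of the given tensor with the product-induced one---but these are elaborations of the same argument rather than a different approach.
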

\begin{proof}
 It is well-known (see \cite[Theorem~A.3.2.10]{Johnstone:2002a}) that unital tabular allegories have a regular category $\cat{R}$ of left adjoints and in fact are isomorphic to the relations po-category of $\cat{R}$. The result then follows from our main theorem (\ref{thm.eequivalence}).
\end{proof}
  

\printbibliography

\end{document}